\numberwithin{equation}{subsection}
\theoremstyle{plain}
\newtheorem{theo}[subsection]{\theoname}
\newtheorem{lemm}[subsection]{\lemmname}
\newtheorem{prop}[subsection]{\propname}
\newtheorem{coro}[subsection]{\coroname}
\theoremstyle{remark}
\newtheorem{rema}[subsection]{\remaname}
\let\c@paragraph\c@equation
\let\cl@paragraph\cl@equation
\def\sozat{\,;\,}
\theoremstyle{plain}
\newtheorem*{theo*}{\theoname}
\theoremstyle{remark}
\def\setminus{\mathchoice
    {\mathbin{\vrule height .72ex width 1.61ex depth -.38ex}}
    {\mathbin{\vrule height .72ex width 1.61ex depth -.38ex}}
    {\mathbin{\vrule height .50ex width 0.85ex depth -.28ex}}
    {\mathbin{\vrule height .20ex width 0.570ex depth -.24ex}}
}
\def\abs#1{\left|#1\right|}
\def\C{\mathbf C}
\def\R{\mathbf R}
\def\N{\mathbf N}
\def\Z{\mathbf Z}
\def\Q{\mathbf Q}
\def\P{\mathbf P}
\let\bar\overline
\let\phi\varphi \let\eps\varepsilon
\let\emptyset\varnothing
\def\lbra{\mathopen{[\![}} \def\rbra{\mathclose{]\!]}}
\def\Aut{\operatorname{Aut}}
\def\Card{\operatorname{Card}}
\def\ord{\operatorname{ord}}
\def\PGL{\operatorname{PGL}}
\def\GL{\operatorname{GL}}
\def\SL{\operatorname{SL}}
\def\End{\operatorname{End}}
\def\id{\operatorname{id}}
\def\an{{\mathrm{an}}}
\def\sing{{\mathrm{sing}}}
\def\MAT{{\mathrm{M}}}
\title {A non-archimedean Ax-Lindemann theorem}
\author{Antoine Chambert-Loir}
\address{%
Univ. Paris Diderot, Sorbonne Paris Cité, Institut de Mathématiques de Jussieu-Paris Rive Gauche, UMR 7586, F-75013, Paris, France}
\email{Antoine.Chambert-Loir@math.univ-paris-diderot.fr}
\author{Fran\c cois Loeser}
\address{Sorbonne Universit\'es, UPMC Univ Paris 06, UMR 7586 CNRS, Institut Math\'ematique de Jussieu-Paris Rive Gauche, F-75005, Paris, France}
\email{Francois.Loeser@upmc.fr}
\dedicatory{\`A Daniel Bertrand, en témoignage d'amitié}
\begin{document}


\maketitle

\section{Introduction}
\subsection{}
The classical Lindemann–Weierstrass theorem states that if algebraic numbers
$\alpha_1,\dots,\alpha_n$  are $\Q$-linearly independent, then their exponentials
$\exp(\alpha_1),\dots,\exp(\alpha_n)$
are algebraically independent over $\Q$. More generally, if $\alpha_1,\dots,\alpha_n$ are any $\Q$-linearly independent complex numbers,
no longer assumed to be algebraic, 
Schanuel's conjecture 
predicts that  the field $\Q(\alpha_1,\dots,\alpha_n, \exp(\alpha_1),\dots,\exp(\alpha_n))$ has transcendence degree at least $n$ over $\Q$.
In \cite{ax:1971}, Ax established power series and differential field versions of Schanuel's conjecture.
In particular the part of Ax's results corresponding to the Lindemann–Weierstrass theorem
can be recasted into geometrical terms as follows:

\begin{theo}[Exponential Ax-Lindemann]
Let $\exp: \C^n \to (\C^{\times})^n$ be the morphism
$(z_1,\dots, z_n)\mapsto(\exp(z_1),\dots,\exp(z_n))$.
Let $V$ be an irreducible algebraic subvariety of 
$(\C^{\times})^n$ and let $W$ be an irreducible component of a maximal algebraic
subvariety of~$\exp^{-1}(V)$. Then $W$~is geodesic, that is, $W$ is defined by a finite family of
equations of the form $\sum_{i=1}^n a_i z_i = b$ 
with $a_1,\dots,a_n \in \Q$ and $b \in \C$.
\end{theo}

In the breakthrough paper \cite{pila:2011}, Pila succeeded in providing an unconditional proof of the André-Oort conjecture for
products of modular curves. One of  his  main ingredients was to prove a hyperbolic  version of the above Ax-Lindemann theorem, which we now state in a simplified version.

Let ${\mathbf{h}}$ denote the complex upper
half-plane and $j\colon  \mathbf{h} \to \C$
the elliptic modular function. 
By an algebraic subvariety of $\mathbf h^n$,
we shall mean the trace in ${\mathbf{h}}^n$ of
an algebraic subvariety of $\C^n$.
An algebraic subvariety of ${\mathbf{h}}^n$
is said to be geodesic if it can be
defined by equations of the form
$z_i = c_i$ and $z_k = g_{k\ell} z_{\ell}$, with $c_i \in \C$ and
$g_{k\ell} \in \GL(2,\Q)^+$.

\begin{theo}[Hyperbolic Ax-Lindemann]
Let $j\colon \mathbf{h}^n \to \C^n$
be the morphism
$(z_1,\dots, z_n)\mapsto(j(z_1),\dots,j(z_n))$.
Let $V$ be an irreducible algebraic subvariety of~$\C^n$ 
and let $W$ be an irreducible component of a maximal algebraic
subvariety of~$j^{-1}(V)$. Then $W$~is geodesic.
\end{theo}

Pila's method to prove this Ax-Lindemann theorem is quite different from the differential approach of Ax.
It follows a strategy initiated by Pila and Zannier in their new proof 
 of the Manin-Mumford conjecture for abelian
varieties \cite{pila-zannier:2008}; that approach makes crucial
use of  the bound on the number of rational points of  bounded height in the transcendental part of sets definable in an o-minimal structure
obtained by Pila and Wilkie in \cite{pila-wilkie2006}.
Recently, still using the Pila and Zannier strategy, Klingler, Ullmo and Yafaev have succeeded in proving a very general form of the
hyperbolic Ax-Lindemann theorem valid for  any arithmetic variety (\cite{klingler-ullmo-yafaev2015}, see also \cite{ullmo-yafaev2014} for the compact case).

\subsection{}In the recent paper \cite{cluckers-comte-loeser2015}, Cluckers, Comte and Loeser established a non-archimedean analogue of the Pila-Wilkie  theorem of \cite{pila-wilkie2006} 
in its block version of \cite{pila2009}.
The purpose of this paper is to use this result to prove a version 
of Ax-Lindemann for products of algebraic curves
admitting a non-archimedean uniformization and whose corresponding
Schottky group is ``arithmetic'' and has rank at least~$2$ (theorem~\ref{mt}).
In particular, this theorem applies  for products of 
Shimura curves admitting a $p$-adic uniformization à la
Cherednik-Drinfeld (see section~\ref{sec.cherednik}).

The basic strategy we use is strongly inspired by that of Pila \cite{pila:2011} (see also \cite{pila2015}), though some new ideas are required in order to adapt it to the
non-archimedean setting. Similarly as  in Pila's approach one starts by working on some neighborhood of the boundary of our space (which, instead
of a product of Poincaré upper half-planes, is a product of open subsets
of the Berkovich projective line). Analytic continuation and monodromy arguments are replaced
by more algebraic ones and explicit matrix computations by   group theory considerations.
We also take advantage of the fact that Schottky groups are free and of the geometric description of their fundamental domains.
Compared with Pila's proof,
where parabolic elements are used in a crucial way,
one main difficulty of the nonarchimedean situation
lies  in the fact that
all nontrivial elements of a Schottky groups are hyperbolic.

To conclude, let us note that there are cases where
$p$-adic analogues of theorems in transcendental number theory 
seem to require other methods than those used to prove their 
complex counterparts.
For instance, it is still an open problem to prove a
$p$-adic analogue, for values of the $p$-adic exponential function, 
of the classical Lindemann-Weierstrass theorem.

Since his first works (see, for example, \cite{bertrand76}),
Daniel Bertrand has shown deep insight into $p$-adic
transcendental number theory, 
and disseminated his vision within the mathematical community.
We are pleased to dedicate this paper to him. 

\subsection*{Acknowledgements}
The research leading to this paper was initiated during the 2014 MSRI program ``Model Theory, Arithmetic Geometry and Number Theory''.
We would like to thank the MSRI for its congenial atmosphere and hospitality.
This research was partially supported by ANR-13-BS01-0006 (Valcomo) and by ANR-15-CE40-0008 (Défigéo).
The second author
was partially supported  by the European Research Council 
under the European Community's Seventh Framework Programme (FP7/2007-2013)/ERC Grant Agreement nr.~246903 NMNAG and the Institut Universitaire de France.

During the early stage of this project, the second author 
benefited from stimulating discussions with Barry Mazur at MSRI, to whom we express our heartfelt thanks.
We are also grateful to Daniel Bertrand for continuous support 
and encouragement.

The comments of Yves André, Jean-François Boutot,
Zoé Chatzidakis, Antoine Ducros, Florent Martin and Jonathan Pila,
helped us to improve this paper, as
well as the numerous remarks of a diligent referee; 
we thank them warmly.

\section{Statement of the theorem}

\subsection{Non-archimedean analytic  spaces}
Given  a complete non-archimedean valued field~$F$, we shall consider in this
paper $F$-analytic spaces in the sense of Berkovich~\cite{berkovich1990,berkovich1993}.
However, the statements, and essentially the proofs, can be carried
on \emph{mutatis mutandis} in the rigid analytic setting.
In this context, there is a notion of irreducible component
(see~\cite{ducros2009}, or~\cite{conrad1999} for the rigid analytic version).

If $V$ is an algebraic variety over~$F$, 
we denote by $V^\an$ the corresponding $F$-analytic space.
There is a canonical topological embedding of~$V(F)$ in~$V^\an$,
and its image is closed if $F$ is locally compact.

If $F'$ is a complete non-archimedean extension of~$F$,
we denote by~$X_{F'}$ the $F'$-analytic space
deduced from an $F$-analytic space~$X$ by base change to~$F'$.

\subsection{Schottky groups}
Let $p$ be a prime number; we denote by~$\C_p$ the completion
of an algebraic closure of~$\Q_p$ and let $F$ be a finite extension of~$\Q_p$
contained in~$\C_p$.
The group $\PGL(2,F)$ acts by homographies on
the $F$-analytic projective line~$\P_1^\an$.
In the next paragraphs,
we recall from~\cite{gerritzen-vanderput1980} a few definitions
concerning Schottky groups  in~$\PGL(2,F)$, their limit sets
and the associated uniformizations of algebraic curves.

One says that a discrete subgroup~$\Gamma$ of $\PGL(2,F)$
is a \emph{Schottky group} if it is
finitely generated, and if no element ($\neq \id$) has finite order
\cite[I, (1.6)]{gerritzen-vanderput1980}.
If $\Gamma$ is a Schottky group, then $\Gamma$ is free;
moreover, any discrete finitely generated subgroup of $\PGL(2,F)$
possesses a normal subgroup of finite index which is a Schottky group,
\cite[I, (3.1)]{gerritzen-vanderput1980}.

We say that $\Gamma$ is \emph{arithmetic} if its elements can be
represented by matrices whose coefficients lie in a number field.
In this case, it follows from the hypothesis that $\Gamma$ is finitely generated
that there exists a number field~$K\subset F$ such that $\Gamma\subset\PGL(2,K)$.

\subsection{Limit sets}
Let $\Gamma$ be a Schottky subgroup of $\PGL(2,F)$.
Its \emph{limit set} is the set~$\mathscr L_\Gamma$
of all points in~$\P_1(\C_p)$
of the form $\lim_n (\gamma_n\cdot x)$,
where $(\gamma_n)$ is a sequence of distinct elements
of~$\Gamma$ and $x\in\P_1(\C_p)$,
\cite[I, (1.3)]{gerritzen-vanderput1980}.
By
\cite[I, (1.6)]{gerritzen-vanderput1980},
the limit set $\mathscr L_\Gamma$ is a compact subset of~$\P_1(F)$.
If the rank of~$\Gamma$ is at least~$2$, then $\mathscr L_\Gamma$
is a perfect (that is, closed and without isolated points)
subset of~$\P_1(F)$,
see \cite[I, (1.6.3) and (1.7.2)]{gerritzen-vanderput1980}.

Let $\Omega_\Gamma=(\P_1)^\an\setminus\mathscr L_\Gamma$;
it is a $\Gamma$-invariant open set of~$\P_1^\an$.
By lemma~\ref{lemm.curve-sparse} below, it  is geometrically irreducible.

\subsection{Quotients}
Let us  assume that $\Gamma$ is a Schottky group and let $g$ be its rank.
From the explicit description of the action of the group~$\Gamma$
given by \cite[I, \S4]{gerritzen-vanderput1980} and recalled
in~\S\ref{ss.fund-domain} below, see also~\cite[p.~86]{berkovich1990},
it follows that the group~$\Gamma$ acts freely 
on~$\Omega_\Gamma$, and the quotient space 
$\Omega_\Gamma/\Gamma$
admits a unique structure of
an $F$-analytic space
such
that the projection $p_\Gamma\colon \Omega_\Gamma\to \Omega_\Gamma/\Gamma$
is both a topological covering and a local isomorphism.
Moreover,
$\Omega_\Gamma/\Gamma$ is the $F$-analytic space associated with a smooth, 
geometrically connected, projective $F$-curve~$X_\Gamma$ of genus~$g$
(\cite[III, (2.2)]{gerritzen-vanderput1980}
and \cite[theorem~4.4.1, p.~86]{berkovich1990}),
canonically determined by the \textsc{gaga} theorem in this context,
\cite[theorem~3.4.12, p.~68]{berkovich1990}.

 
\subsection{}
Let us now consider a finite family $(\Gamma_i)_{1\leq i\leq n}$
of Schottky subgroups of $\PGL(2,F)$ of rank~$\geq 2$.
Let us set $\Omega=\prod_{i=1}^n \Omega_{\Gamma_i}$ and
$X=\prod_{i=1}^n X_{\Gamma_i}$, 
and let $p\colon \Omega \to X^\an$ be the morphism deduced
from the morphisms $p_{\Gamma_i}\colon \Omega_{\Gamma_i}\to X_{\Gamma_i}^\an$.

\subsection{Flat subvarieties}
%
Let $K$ be a complete extension of~$F$
and let $W$ be a closed analytic subspace of~$\Omega_K$.

The  following terminology is borrowed from the  analogous
notions in the differential geometry of hermitian symmetric domains.

We say that $W$ is \emph{irreducible algebraic} if there exists
a $K$-algebraic subvariety~$Y$ of~$(\P_1^n)_K$
such that $W$ is an irreducible component of 
the analytic space~$\Omega_K\cap Y^\an$.
In this case, one can take for~$Y$ the Zariski closure of~$W$
in~$(\P_1^n)_K$; it is irreducible and satisfies $\dim(Y)=\dim(W)$
(see~\cite{ducros2009}, proposition~4.22).

We say that $W$ is \emph{flat} if it can be defined by equations 
of the following form:
\begin{enumerate}
\item $z_i=c$, for some $i\in\{1,\dots,n\}$ and $c\in\Omega_{\Gamma_i}(K)$;
\item $z_j=g\cdot z_i$, for some pair $(i,j)$ of 
distinct
elements
of $\{1,\dots,n\}$ and $g\in\PGL(2,F)$.
\end{enumerate}
Assume that $W$ is flat and 
let $Y$ be the subvariety of~$(\P_1^n)_K$
defined by equations of this form 
which define~$W$ on~$\Omega_K$.
There exists 
a subset~$I$ of $\{1,\dots,n\}$ such that
the projection $q_I\colon \P_1^n\to \P_1^I$ given by the coordinates in~$I$
induces an isomorphism of~$Y$ to~$(\P_1^I)_K$.
This implies that $q_I$ induces an isomorphism from~$W$
to $\prod_{i\in I}\Omega_{i,K}$.
In particular, $W$ is irreducible,
even geometrically irreducible,
hence is irreducible algebraic.  Conversely, we observe
that if  $W$ is geometrically irreducible 
and if there exists a complete extension~$L$ of~$K$
such that $W_L$ is flat, then $W$ is flat.

We say that $W$ is \emph{geodesic} if, moreover, the
elements~$g$ in~(2) can be taken such that
$g\Gamma_i g^{-1}$ and $\Gamma_j$ 
are commensurable (i.e., their intersection 
has finite index in both of them).

Here is the main result of this paper.

\begin{theo}[Non-archimedean Ax-Lindemann theorem]\label{mt}
Let $F$ be a finite extension of $\Q_p$ and
let $(\Gamma_i)_{1\leq i\leq n}$ be a finite family
of  \emph{arithmetic} Schottky subgroups of $\PGL(2,F)$ of ranks~$\geq 2$.
As above,
let us set $\Omega=\prod_{i=1}^n \Omega_{\Gamma_i}$ and
$X=\prod_{i=1}^n X_{\Gamma_i}$, 
and let $p\colon \Omega \to X^\an$ be the morphism deduced
from the morphisms $p_{\Gamma_i}\colon \Omega_{\Gamma_i}\to X_{\Gamma_i}^\an$.

Let $V$ be an irreducible algebraic subvariety of~$X$
and let $W$ be an irreducible algebraic
subvariety of~$\Omega$, maximal among those contained in $p^{-1}(V^\an)$. 
Then every irreducible component of~$W_{\C_p}$ is flat.
\end{theo}

The proof of this theorem is given in section~\ref{sec.proof};
it follows the strategy of Pila--Zannier. 
In the archimedean setting, this strategy 
relies crucially on a theorem of Pila--Wilkie
about rational points on definable sets; we recall in 
section~\ref{sec.definability} the non-archimedean analogue of
this theorem, due to Cluckers, Comte and Loeser (see
\cite{cluckers-comte-loeser2015}), 
which is used here. 
It is at this point that 
we need the assumption that the group~$\Gamma$ be arithmetic.
This restriction is inherent to Pila--Zannier's strategy
and we do not know whether it can be bypassed.

In section~\ref{sec.schottky},
we recall the reader a few more facts on  $p$-adic Schottky groups
and $p$-adic uniformization,
essentially borrowed from the book~\cite{gerritzen-vanderput1980}.

In a final section, we prove a
characterization (theorem~\ref{theo.geodesic})
of geodesic subvarieties of~$\Omega$
as the geometrically irreducible algebraic subvarieties
whose projection to~$X$ is algebraic (``bi-algebraic subvarieties''),
in analogy with what happens in the context of Ax's theorem
or of Shimura varieties.

\section{The example of Shimura curves}
\label{sec.cherednik}

We begin by recalling the definition of Shimura curves and
their $p$-adic uniformization. The litterature  is unfortunately
rather scattered; we refer to~\cite{boutot-carayol1991}
for more detail, as well as to chapter~0 of~\cite{clark2003}.
 
\subsection{Complex Shimura curves}
Let $B$ be a quaternion division algebra with center~$\Q$;
we assume that it is indefinite, namely $B\otimes_\Q\R\simeq \MAT_2(\R)$.
Let then $\mathscr O_B$ be a maximal order of~$B$,
that is a maximal subring of~$B$ which is isomorphic to~$\Z^4$
as a $\Z$-module.
Let $H$ be the algebraic group of units of~$\mathscr O_B$, modulo center,
considered as a $\Z$-group scheme.
For every field~$R$ containing~$\Q$, one has 
$H(R)=(B\otimes_\Q R)^\times/Z((B\otimes_\Q R)^\times)$;
in particular, the group $H(\R)$ is isomorphic to~$\PGL(2,\R)$,
and we fix such an isomorphism.
Then the group $H(\R)$ acts by homographies 
on the double Poincaré upper half-plane
\[ \mathbf h^{\pm}=\C\setminus\R. \]
Let also $\Delta$ be a congruence subgroup of~$H(\Z)$;
recall that this means that there exists an integer~$N\geq1$
such that $\Delta$ contains the kernel of the canonical morphism
$H(\Z)\to H(\Z/N\Z)$. We assume that $\Delta$ has been chosen
small enough so that the stabilizer 
of every point of~$\mathbf h^{\pm}$  is trivial.
The quotient $\mathbf h^{\pm}/\Delta$ has a natural
structure of a compact Riemann surface and the projection
$p\colon \mathbf h^{\pm}\to \mathbf h^{\pm}/\Delta$
is an étale covering.

This curve parameterizes triples $(V,\iota,\nu)$,
where $V$ is a complex two dimensional abelian variety,
$\iota\colon \mathscr O_B\to \End(V)$ is a faithful action 
of~$\mathscr O_B$ on~$V$ and $\nu$ is a level structure ``of type~$\Delta$''
on~$V$. When   $\Delta$ is the kernel
of~$H(\Z)$ to~$H(\Z/N\Z)$, for some integer~$N\geq 1$,
such a level structure corresponds to an equivariant
isomorphism of~$V_N$, the subgroup of~$N$-torsion of~$V$, 
with~$\mathscr O_B/N\mathscr O_B$.

By~\cite{shimura1961}, it admits a canonical structure of an algebraic curve~$S$
which can be defined over a number field~$E$ in~$\C$.

\subsection{$p$-adic uniformization of Shimura curves}
Let $p$ be a prime number at which~$B$ ramifies, which means
that $B\otimes_\Q\Q_p$~is a division algebra.
Let also~$F$ be the completion of the field~$E$
at a place dividing~$p$; we denote by~$\C_p$ the $p$-adic completion
of an algebraic closure of~$F$. We still denote by~$S$
the $F$-curve deduced from an $E$-model of the complex curve~$S$.

Let $\Omega=(\P_1)_F^\an\setminus \P_1(\Q_p)$ be the extension
of scalars to~$F$ of Drinfeld's upper half plane.
According to the theorem of Cherednik-Drinfeld
(\cite{cerednik1976,drinfeld1976}; see also~\cite{boutot-carayol1991}
for a detailed exposition), and up to replacing~$F$ by a finite
unramified extension, 
the $F$-analytic curve~$S^\an$ admits 
a ``$p$-adic uniformization'' which takes the form of a surjective
analytic morphism
\[ j \colon \Omega \to S^\an ,  \]
which identifies $S^\an$ with
the 
quotient of~$\Omega$ by the action
of a subgroup~$\Gamma$ of~$\PGL(2,\Q_p)$.
Up to replacing~$\Delta$ by a smaller congruence subgroup,
which replaces~$S$ by a finite (possibly ramified) covering, 
we may also assume that $\Gamma$ is
a $p$-adic Schottky subgroup acting freely on~$\Omega$,
and that $j$ is topologically étale. Then the 
morphism~$j\colon \Omega\to S^\an$ is the universal cover of~$S^\an$.

Let us describe this subgroup.
Let $A$ be the quaternion division algebra
over~$\Q$ with the same invariants as~$B$,
except for those invariants at~$p$ and~$\infty$ which are switched.
In particular, $A\otimes_\Q\R$ is Hamilton's quaternion
algebra, while $A\otimes_\Q\Q_p\simeq \MAT_2(\Q_p)$.
Let $G$ be the algebraic group of units of~$A$, modulo center;
in particular, $G(\Q_p)\simeq\PGL(2,\Q_p)$.
As explained in~\cite{boutot-carayol1991},
the discrete subgroup $ \Gamma$ is the intersection 
of $G(\Q)$ with a compact open subgroup of 
$G(\mathbf A_\mathrm f)$,
the adelic group associated with~$G$
where the place at~$\infty$ is omitted.

\begin{lemm}\label{prop.fundamental-drinfeld}
The group~$\Gamma$  is conjugate 
to an arithmetic Schottky subgroup in~$\PGL(2,\Q_p)$, 
its rank is at least~$2$,
and its limit set is equal to~$\P_1(\Q_p)$.
\end{lemm}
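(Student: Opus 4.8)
The plan is to unwind the adelic description of $\Gamma$ given just above and extract the three assertions from standard facts about the Cherednik--Drinfeld uniformization and about quaternionic arithmetic groups. First I would establish arithmeticity together with the discreteness and the Schottky property. Since $\Gamma = G(\Q) \cap U$ for a compact open subgroup $U \subset G(\mathbf A_\mathrm f)$, the group $\Gamma$ lies in $G(\Q) \subset \PGL(2,\Q_p)$ via the fixed isomorphism $G(\Q_p) \simeq \PGL(2,\Q_p)$; choosing a $\Z$-order in $A$ whose units generate $G(\Q)$, the elements of $\Gamma$ are represented by matrices with coefficients in a number field, so $\Gamma$ is arithmetic in the sense of the text. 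Discreteness follows because $U$ is compact open and $G(\Q)$ is discrete in $G(\mathbf A_\mathrm f)$ (by the product formula, since $A$ is a division algebra, $G(\Q)$ is even \emph{cocompact} modulo the centre in $G(\mathbf A_\mathrm f)$); projecting to the $p$-component gives a discrete subgroup of $\PGL(2,\Q_p)$. Torsion-freeness is exactly the hypothesis arranged by passing to a small enough congruence subgroup $\Delta$ (equivalently shrinking $U$): after this shrinking, $\Gamma$ has no nontrivial element of finite order, hence — being also finitely generated, as any arithmetic group is — it is a Schottky group, possibly after conjugation as in \cite[I, (3.1)]{gerritzen-vanderput1980}; the cocompactness of $G(\Q)\backslash G(\mathbf A_\mathrm f)$ translates into the compactness of the Shimura curve $S$, so $\Gamma$ is a \emph{cocompact} lattice.

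Next I would pin down the limit set. Because $A\otimes_\Q\Q_p \simeq \MAT_2(\Q_p)$ and $A$ is a division algebra, $G(\Q)$ contains no unipotent elements and every nontrivial element of $\Gamma$ is hyperbolic (loxodromic) in $\PGL(2,\Q_p)$; its two fixed points lie in $\P_1(\Q_p)$ — indeed in $\P_1$ of a number field — so $\mathscr L_\Gamma \subset \P_1(\Q_p)$, consistently with \cite[I, (1.6)]{gerritzen-vanderput1980}. For the reverse inclusion one uses that $\Omega_\Gamma/\Gamma = S^\an$ is \emph{compact}: if $\mathscr L_\Gamma$ were a proper closed subset of $\P_1(\Q_p)$, then $\Omega_\Gamma = (\P_1)^\an \setminus \mathscr L_\Gamma$ would contain an affinoid neighbourhood of some point of $\P_1(\Q_p)\setminus\mathscr L_\Gamma$, and — because $\Gamma$ is discrete and acts properly discontinuously on $\Omega_\Gamma$ — the quotient of a suitable $\Gamma$-translate-saturated neighbourhood would be non-compact, contradicting the projectivity of $X_\Gamma$. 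Equivalently, and more cleanly, one invokes the general principle that the genus of $X_\Gamma$ equals the rank $g$ of $\Gamma$ together with the classification of the quotients: $\mathscr L_\Gamma$ is the full boundary $\P_1(\Q_p)$ precisely in the cocompact (``Mumford'') case, which is the case of Cherednik--Drinfeld as opposed to, say, a single open annulus. The rank bound $g \geq 2$ then follows because the genus of a Shimura curve arising from an indefinite quaternion division algebra with a sufficiently small level structure is at least $2$ (the only quaternionic curves of genus $\le 1$ occur for very small discriminant and large level, which are excluded once $\Delta$ is shrunk as above); alternatively $g\geq 2$ is forced simply by the requirement, already imposed, that $\Gamma$ be torsion-free and cocompact, since a free cocompact lattice of rank $\le 1$ in $\PGL(2,\Q_p)$ does not have $\P_1(\Q_p)$ as limit set.

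The main obstacle I anticipate is the reverse inclusion $\P_1(\Q_p) \subset \mathscr L_\Gamma$: it is the one point that genuinely uses the \emph{cocompactness} of the Cherednik--Drinfeld uniformization and not merely formal properties of Schottky groups. Concretely one must rule out that $\Omega_\Gamma$ is strictly larger than the complement of $\P_1(\Q_p)$, i.e. that the uniformizing domain is all of Drinfeld's half-plane $(\P_1)_F^\an \setminus \P_1(\Q_p)$ and not a strictly larger open set. This is not a formal consequence of $\Gamma$ being Schottky; it is the substance of the Cherednik--Drinfeld theorem, which identifies $\Omega$ with Drinfeld's half-plane and exhibits $\Gamma$ acting with \emph{compact} quotient, so that by the dictionary of \cite[I]{gerritzen-vanderput1980} the limit set must exhaust $\P_1(\Q_p)$. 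I would therefore structure the proof so that this inclusion is deduced directly from the statement of the Cherednik--Drinfeld theorem recalled above ($j\colon \Omega \to S^\an$ with $\Omega = (\P_1)_F^\an\setminus\P_1(\Q_p)$), together with the fact, from \cite[I, (4.1)]{gerritzen-vanderput1980}, that for a Schottky group the uniformizing domain $\Omega_\Gamma$ is the complement of the limit set; comparing the two descriptions of the domain of $j$ gives $\mathscr L_\Gamma = \P_1(\Q_p)$, and then perfectness of $\mathscr L_\Gamma$ (which fails for rank $\le 1$) yields $g \geq 2$.
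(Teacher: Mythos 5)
Your proposal follows essentially the same route as the paper: limit set contained in $\P_1(\Q_p)$ by discreteness, the reverse inclusion forced by the projectivity of $S^\an$ via Cherednik--Drinfeld, arithmeticity from a splitting of $G$ over a number field, and the rank bound from freeness. Two points need tightening. First, your compactness argument is aimed at the wrong quotient: $X_\Gamma^\an=\Omega_\Gamma/\Gamma$ is \emph{always} projective for a Schottky group, so nothing about it can be contradicted; the correct mechanism (which your final paragraph gestures at) is that $S^\an=\Omega/\Gamma$ is an open subspace of the connected projective curve $X_\Gamma^\an$, and since $S^\an$ is itself projective by Cherednik--Drinfeld it must equal $X_\Gamma^\an$, whence $\Omega=\Omega_\Gamma$ and $\mathscr L_\Gamma=\P_1(\Q_p)$. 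Second, for arithmeticity it is not enough to choose a $\Z$-order: under the \emph{fixed} isomorphism $G(\Q_p)\simeq\PGL(2,\Q_p)$ the image of $G(\Q)$ need not have algebraic entries. One must compare this splitting with a splitting $G_K\simeq\PGL(2)_K$ over a number field $K\subset\Q_p$; the two differ by conjugation (Skolem--Noether), which is exactly why the lemma asserts that $\Gamma$ is \emph{conjugate} to an arithmetic group rather than arithmetic on the nose. Your alternative derivation of the rank bound from the perfectness of the limit set is fine and arguably cleaner than the paper's appeal to non-abelianness.
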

\begin{proof}
The group~$\Gamma$ is a discrete subgroup of $\PGL(2,\Q_p)$
hence  its limit set $\mathscr L_\Gamma$ 
is a $\Gamma$-invariant subset of $\P_1(\Q_p)$.
In other words,
the Drinfeld upper half-plane $\Omega=\P_1^\an\setminus \P_1(\Q_p)$ is an open subset
of $\Omega_\Gamma=\P_1^\an\setminus\mathscr L_\Gamma$.
By the theory of Mumford curves and Schottky groups,
see~\cite{gerritzen-vanderput1980},
the analytic curve $(\P_1^\an\setminus\mathscr L_\Gamma)/\Gamma$
is algebraic, and admits the analytic curve~$S^\an=\Omega/\Gamma$ 
as an open subset.
According to the Cherednik-Drinfeld theorem, 
the curve~$S^\an$ is projective. 
This implies that $\Omega=\P_1^\an\setminus\mathscr  L_\Gamma$,
hence $\mathscr L_\Gamma=\P_1(\Q_p)$.

After base change to~$\Q_p$,
the algebraic $\Q$-group~$G$ becomes isomorphic to $\PGL(2)_{\Q_p}$.
Consequently, there exists a finite algebraic extension~$K$
of~$\Q$, contained in~$\Q_p$, such that $G_K\simeq \PGL(2)_K$.
By such an isomorphism, $G(\Q)$ is mapped into~$\PGL(2,K)$;
this implies that the group~$\Gamma$ is conjugate to an arithmetic
group.

Since $\Gamma$ is a Schottky group, it is free. Since it is non-abelian,
its rank is at least~$2$.
\end{proof}

By this lemma, the following result is a special case of
our main theorem~\ref{mt}.
\begin{theo}\label{mt-shimura}
Let $F$ be a finite extension of $\Q_p$, 
let $\Omega=(\P_1)_F^\an\setminus\P_1(\Q_p)$ and
let $j\colon \Omega^n \to S^\an$ be the  Cherednik--Drinfeld
uniformization of a product of Shimura curves.
Let $V$ be an irreducible algebraic subvariety of~$S$
and let $W\subset\Omega^n$ 
a maximal irreducible algebraic
subvariety of~$j^{-1}(V^\an)$. 

Then every irreducible component of~$W_{\C_p}$ is flat.
\end{theo}

\subsection{}
By the same arguments, one can show that our main theorem~\ref{mt}
also applies to the uniformizations of Shimura curves associated
with quaternion division algebras over totally real fields,
as considered by Cherednik~\cite{cerednik1976}
and Boutot--Zink~\cite{boutot-zink1995}.

\subsection{}
As suggested by J.~Pila and explained to us by Y.~André, 
theorem~\ref{mt-shimura} can also be deduced from its complex analogue,
which is a particular case of~\cite{ullmo-yafaev2014}.
The crucial ingredient is a deep theorem of André (\cite{andre2003}, III, 4.7.4)
stating that the $p$-adic uniformization and the complex
uniformization of Shimura curves satisfy the \emph{same} 
non-linear differential equation.
His proof relies on a delicate description of the Gauss-Manin equation 
in terms of convergent crystals and on the tempered fundamental group
introduced by him.
From that point on, one can apply Seidenberg's
embedding theorem~\cite{seidenberg1958}
in differential algebra 
to prove that both the complex and non-archimedean Ax-Lindemann  
theorems are equivalent to a single statement in differential algebra,
in the original spirit of Ax's paper~\cite{ax:1971}.

\section{Definability --- A \texorpdfstring{$p$}{p}-adic Pila-Wilkie theorem}
\label{sec.definability}

\subsection{}\label{ss.subanalytic}
There are two distinct notions of $p$-adic analytic geometry,
one is  ``naïve'', and the other one is rigid  analytic. 
(Regarding rigid analytic geometry, we shall work in the framework
defined by Berkovich.)
These two notions give rise to three classes of sets,
and we shall use them all in this paper.
Let $F$ be a finite extension of~$\Q_p$.

\begin{enumerate}\def\theenumi{\alph{enumi}}\def\labelenumi{\theenumi)}
\item
\emph{Semialgebraic} and \emph{subanalytic} subsets of $\Q_p^n$
are defined by Denef and van den Dries
in~\cite{denef-vandendries1988}; 
see also \cite[p.~26]{cluckers-comte-loeser2015}.

Replacing~$\Q_p$ by a finite extension~$F$, this leads to an analogous
notion of $F$-semialgebraic, or $F$-subanalytic, subset of~$F^n$.
Considering affine charts, one then defines
$F$-semialgebraic or $F$-subanalytic subsets of~$V(F)$,
for every (quasi-projective, say) algebraic variety~$V$ defined over~$F$.

On the other hand, the Weil restriction functor
assigns to~$V$
an algebraic variety~$W$ defined over~$\Q_p$ together
with a canonical identification $V(F)\to W(\Q_p)$;
we shall say that a subset of $V(F)$ is $\Q_p$-semialgebraic,
resp. $\Q_p$-subanalytic, if its image in~$W(\Q_p)$
is $\Q_p$-semi-algebraic, resp. $\Q_p$-subanalytic.
Observe that $F$-semi-algebraic subsets of~$V(F)$ are $\Q_p$-semi-algebraic,
and that $F$-subanalytic subsets of~$V(F)$ are $\Q_p$-subanalytic.

Recall that a $F$-subanalytic subset~$S$ is said to be smooth 
of dimension~$d$ at a point~$x$
if it possesses a neighborhood~$U$ which is isomorphic to  
the unit ball of~$F^d$; then $S$ is smooth of dimension~$d$ 
at every point of~$U$.

\item In~\cite{lipshitz1993}, Lipshitz defined a notion of 
\emph{rigid subanalytic subset} of $\C_p^n$. We shall use  in this paper
the variant (\cite{lipshitz-robinson2000b}, definition~2.1.1)
where the coefficients of all polynomials and power series involved belong 
to~$F$; we will call them \emph{rigid $F$-subanalytic.}
The notion extends to subsets of~$V(\C_p)$, where $V$ is an
algebraic variety defined over~$F$.
\end{enumerate}

These classes of sets are stable under boolean operations and projections
(corollary~4.3 of \cite{lipshitz-robinson2000c}),
admit cell decompositions 
(theorem~7.4 of~\cite{cluckers-lipshitz-robinson2006}),
a natural notion of dimension
(in fact, they are b-minimal in the sense of~\cite{cluckers-loeser2007}),
as well as a natural notion of smoothness.

\begin{lemm}\label{lemm.remark}
Let $F$ be a finite extension of~$\Q_p$ contained in~$\C_p$
and let $V$ be an algebraic variety over~$F$.
Let $Z$ be a rigid $F$-subanalytic subset of~$V(\C_p)$.
Then $Z(F)=Z\cap V(F)$ is an $F$-subanalytic subset of~$V(F)$.
\end{lemm}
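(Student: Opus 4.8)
The plan is to reduce the statement to the one-variable case and then to compare the two notions of subanalyticity at the level of power series. First I would observe that a rigid $F$-subanalytic subset of $V(\C_p)$ is, by definition, built by finitely many boolean operations and coordinate projections from sets of the form $\{x : f(x)=0\}$ and $\{x : |f(x)|\leq|g(x)|\}$, where $f,g$ are ``rigid analytic with $F$-coefficients'' functions on $F$-affinoid subdomains --- i.e. restricted power series over $F$ in the sense of Lipshitz--Robinson with coefficients in $F$. Since the $F$-subanalytic sets of Denef--van den Dries are also stable under boolean operations and projections (this is recalled in the excerpt), it suffices to show that the $F$-points of such a basic rigid building block form an $F$-subanalytic subset of $V(F)$, and then propagate through the boolean/projection operations. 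Restricting to an affine chart, we are reduced to $V=\A^n$, i.e. to subsets of $F^n$.

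Next I would make the key reduction: a power series $f=\sum_\nu a_\nu x^\nu$ with $a_\nu\in F$ that converges on the closed unit polydisc of $\C_p^n$ already converges on the closed unit polydisc of $F^n$, simply because $F\subset\C_p$ isometrically and convergence is a statement about $|a_\nu|\to0$. Evaluating such an $f$ at a point of $F^n$ gives an element of $F$ (the completeness of $F$ guarantees the sum lies in $F$), and the resulting function $F^n\to F$ is exactly a ``restricted analytic function'' in the language of Denef--van den Dries's analytic structure on $\Q_p$ (after a Weil restriction of scalars, or directly in the $F$-subanalytic language, which was set up precisely to allow $F$-coefficient restricted power series). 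Hence the zero-locus $\{x\in F^n : f(x)=0\}$ and the inequality-locus $\{x\in F^n : |f(x)|\leq|g(x)|\}$ are $F$-subanalytic subsets of $F^n$ by definition; the same holds on each affinoid subdomain of the unit polydisc, since these are themselves $F$-semialgebraic (they are cut out by valuation inequalities on the $x_i$).

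The last step is bookkeeping: $Z$ is obtained from finitely many such basic pieces $Z_k$ (living on various affinoid pieces, which together cover $V(\C_p)$ by finitely many pieces, each of which meets $V(F)$ in an $F$-semialgebraic set) by boolean operations and projections $\pi\colon V\times\A^m\to V$ along the extra rigid-subanalytic variables. Taking $F$-points commutes with boolean operations, and it commutes with the projection in the sense that $(\pi(\widetilde Z))\cap V(F)=\pi\bigl(\widetilde Z\cap (V\times\A^m)(F)\bigr)$ up to the usual subtlety that a point of $V(F)$ in the image could a priori only be hit by a point with some coordinates in $\C_p\setminus F$; but the definition of rigid $F$-subanalytic sets is arranged (as in \cite{lipshitz-robinson2000b}) so that the witnessing variables can be taken in bounded affinoid pieces and the projection theorem holds at the level of $F$-points, or equivalently one invokes quantifier elimination for the $F$-subanalytic language to absorb the projection. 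Either way, $Z\cap V(F)$ is an $F$-subanalytic subset of $V(F)$.

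I expect the main obstacle to be precisely this compatibility of projections with passage to $F$-points: one must be careful that eliminating a rigid-analytic existential quantifier over $\C_p$-variables yields a condition that is still $F$-subanalytic, rather than merely $\C_p$-subanalytic with $F$-coefficients. The cleanest route is to appeal directly to the structure theory (cell decomposition / quantifier elimination) for the $F$-coefficient rigid subanalytic language of \cite{lipshitz-robinson2000b,lipshitz-robinson2000c} to put $Z$ in a quantifier-free form using only $F$-coefficient restricted power series and valuation inequalities, at which point taking $F$-points is immediate and term-by-term matches the Denef--van den Dries $F$-subanalytic language. The convergence transfer $\C_p\rightsquigarrow F$ and the completeness of $F$ are then the only analytic inputs, and they are elementary.
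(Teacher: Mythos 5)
Your proposal is correct and, in its final form, coincides with the paper's proof: reduce to $V=\mathbf A^n$, invoke quantifier elimination for the $F$-coefficient variant of Lipshitz's analytic language to write $Z$ by a quantifier-free formula, and observe that taking $F$-points of such a formula lands term-by-term in the Denef--van den Dries $F$-subanalytic language. The projection difficulty you flag is real, and the quantifier-free normal form is exactly how the paper disposes of it.
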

\begin{proof}
We may assume that $V=\mathbf A^n$.
Then $Z$ can be defined by a quantifier-free formula of 
the above-mentioned variant of Lipshitz's analytic language,
and our claim follows from the very definition of this language.
\end{proof}


\subsection{}
A \emph{block} in $\Q_p^n$ is either empty, or a singleton, or a smooth
subanalytic subset of pure dimension~$d>0$ which is contained in
a smooth semialgebraic subset of dimension~$d$.

A \emph{family of blocks} 
in $\Q_p^n\times\Q_p^s$ is a subanalytic subset~$W$
such that there exists an integer $t\geq0$
and a semialgebraic set $Z\subset \Q_p^n\times\Q_p^t$
such that for every $\sigma\in\Q_p^s$, there exists
$\tau\in\Q_p^t$ such that the fibers $W_\sigma$ and $ Z_\tau$ 
are smooth of the same dimension, and $W_\sigma\subset Z_\tau$.
(In particular, the sets $W_\sigma$, for $\sigma\in\Q_p^s$, 
are blocks in $\Q_p^n$.)

Let $F$ be a finite extension of~$\Q_p$.
Considering Weil restriction, we deduce from these notions
the definition of a block in~$F^n$,
or of a family of blocks in~$F^n\times\Q_p^t$.

%

\subsection{}\label{ss.heights}
Let $H$ be the standard height function  on~$\overline\Q$;
for $x\in\Q$, written as a fraction $a/b$ in lowest terms, one has
$H(x)=\max(\abs a,\abs b)$. We also write $H$ for the height function
on~$\overline \Q^n$ defined by $H(x_1,\dots,x_n)=\max_i(H(x_i))$. 
Viewing $\GL(d,\overline\Q)$ as a subspace of $\overline\Q^{d^2}$,
it defines a height function on $\GL(d,\overline\Q)$. 
There exists a strictly positive real number~$c$
such that $H(gg')\leq c H(g)H(g')$ 
for every $g,g'\in\GL(d,\overline\Q)$,
and $H(g^{-1})\ll H(g)^c$ for every $g\in\GL(d,\overline\Q)$.
When $d=2$ and $g\in\SL(2,\overline\Q)$, one even has $H(g^{-1})=H(g)$.

Let $g\in\GL(d,\overline\Q)$.
If $g$ is diagonal, then $H(g^n)=H(g)^n$ for every $n\in\Z$.
More generally, if  $g$ is \emph{semisimple}, we have  upper and lower bounds
$H(g)^n\ll H(g^n)\ll H(g)^n$, for every $n\in\Z$.

By abuse of language, if $G$ is a linear algebraic $\overline\Q$-group, we 
implicitly choose an embedding in some linear group,
which furnishes a height function~$H$ on $G(\overline\Q)$. 

The actual
choice of this height function depends on the chosen embedding,
but any other height function~$H'$ is equivalent, in the sense
that there is a strictly
positive real number~$c$ such that $H(x)^{1/c}\ll H'(x)\ll H(x)^c$
for every $x\in G(\overline\Q)$.

\subsection{}\label{subsec.many}
Let $Z$ be a subset of~$F^n$ and let $K$ be finite extension
of~$\Q$ contained in~$F$. 
We write $Z(K)=Z\cap K^n$ ($K$-rational points of~$Z$).
For every real number~$T$, 
we define $Z(K;T)=\{x \in Z(K)\sozat H(x)\leq T\}$;
for every integer~$D$, we also define $Z(D;T)$
to be the set of points $x\in Z(F)$ such that $[\Q(x_i):\Q]\leq D$
for every $i\in\{1,\dots,n\}$
and $H(x)\leq T$. These are finite sets.

We say that $Z$ has \emph{many $K$-rational points}
if there exist strictly positive real numbers~$c,\alpha$ such that 
\[ \Card \big( Z(K;T)\big)  \geq c T^\alpha \]
for all $T$ large enough.
This notion only depends on the equivalence class of the height.

\subsection{}
In~\cite{cluckers-comte-loeser2015},
Cluckers, Comte and Loeser established
a $p$-adic analogue of a theorem of Pila-Wilkie~\cite{pila-wilkie2006}
concerning the rational points of a definable set.
We will use the following variant
of \cite[Theorem~4.2.3]{cluckers-comte-loeser2015}.
\begin{theo}\label{theo.pw}
Let $F$ be a finite extension of~$\Q_p$ and let $K$ 
be a finite extension of~$\Q$, contained in~$F$.
Let $Z\subset F^n$ be a $\Q_p$-subanalytic subset.
Let $\eps>0$.
There exist $s\in\N$,  $c\in\R$ and a family of blocks
$W\subset Z\times\Q_p^s$ satisfying the following property:
for every $T>1$, there exists a subset $S_T\subset\Q_p^s$
of cardinality $< cT^\eps$ such that
$Z(K;T)\subset \bigcup_{\sigma \in S_T} W_\sigma$.
\end{theo}
\begin{proof}
Let $d=[F:\Q_p]$. By Krasner's lemma, there exists
an algebraic number~$e\in F$ of degree~$d$ such that $F=\Q_p(e)$.
Then the basis $(1,e,\dots,e^{d-1})$ defines 
a $\Q_p$-linear bijection $\psi\colon \Q_p^d\xrightarrow\sim F$, 
$(x_1,\dots,x_d)\mapsto \sum x_i e^{i-1}$. 
Let $\phi\colon F\simeq\Q_p^d$ be its inverse.

By construction, if $K$ is a number field
contained in~$F$ and $x\in K^d$, then $\psi(x)\in K(e)$;
in particular, $[\Q(\psi(x)):\Q]\leq d [\Q(x):\Q]$.
Conversely, if $x\in K$, then the coordinates of~$\phi(x)$ in~$\Q_p^{d}$
belong to the Galois closure~$K(e)'$ of
the compositum~{$K\cdot \Q(e)$}
hence are algebraic numbers, 
of degrees $\leq D=[K(e)':\Q]$.
In other words, $\phi$ and $\psi$ induce bijections
at the level of algebraic points. Since these maps are linear,
there exists a positive real number~$a>0$
such that $a^{-1} H(x)\leq H(\phi(x))\leq a H(x)$
for every $x\in K$.

We deduce from $\phi$
a $\Q_p$-linear isomorphism $\phi\colon F^n\to\Q_p^{nd}$.
In particular, $Z'=\phi(Z)$ 
is a subanalytic subset of~$\Q_p^{nd}$.
The morphism~$\phi$ maps algebraic points of given degree to algebraic points
of uniformly bounded degree, and there exists a positive real number~$a>0$
such that $a^{-1} H(x)\leq H(\phi(x))\leq a H(x)$
for every $x\in Z(K)$.

The definition of a family of blocks that we have
adopted here is slightly stronger
than the one used in Theorem~4.2.3 of~\cite{cluckers-comte-loeser2015}.
However, all proofs go over without any modification,
so that there exists a family of blocks~$W'\subset Z'\times\Q_p^s$
such that for any $T>1$, there exists a subset $S_T\subset\Q_p^s$
of cardinality~$<cT^\eps$ 
such that $Z'(D;T)\subset \bigcup_{\sigma\in S_T}W'_\sigma$.
Let $\psi\colon F^n\times\Q_p^s\to \Q_p^{nd}\times\Q_p^s$ be the map
$(x,y)\mapsto (\phi(x),y)$ and
let $W=\psi^{-1}(W')\subset F^n\times\Q_p^s$.
By definition, $W$ is a family of blocks in~$Z$.
Moreover, for any $T>1$, one has 
\[ Z(F;T)\subset \psi^{-1}(Z'(D;aT))\subset \bigcup_{\sigma\in S_{aT}}
 \phi^{-1}(W'_\sigma) = \bigcup_{\sigma\in S_{aT}}
 W_\sigma. \]
Since $\Card(S_{aT})\leq ca^\eps T^\eps$, the family of blocks~$W$
satisfies the requirements of the theorem.
\end{proof}

\section{Zariski closures and analytic functions}

\subsection{}
Let $F$ be a complete non-archimedean valued field.
Let $V$ be  an $F$-scheme of finite type.
One says that a subset $K$ of~$V^\an$ is  \emph{sparse}
if there exist a set~$T$ and  a 
subset~$Z$ of $V^\an\times T$ such that
for every $t\in T$, $Z_t=\{x\in V^\an\sozat (x,t)\in Z\}$
is a Zariski-closed subset of~$V^\an$ with empty interior,
and $K= \bigcup_{t\in T} Z_t$. 

\begin{lemm}\label{lemm.sparse-empty-interior}
A sparse set has empty interior.
\end{lemm}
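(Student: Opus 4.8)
\emph{Strategy.} The plan is to argue by contradiction: suppose the sparse set $K=\bigcup_{t\in T}Z_t$ contains a nonempty open subset $U$ of $V^\an$. I will produce a point $x\in U$ lying in \emph{no} proper Zariski-closed subvariety of the relevant irreducible component of~$V$; since each $Z_t$ is contained in such a subvariety, this forces $U\not\subseteq K$, a contradiction. (Observe that a Baire category argument is of no help here: even when $V^\an$ is a Baire space, the index set $T$ may be uncountable, and an open set can very well be an uncountable union of nowhere dense sets.)

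\emph{Reduction to the integral case.} We may assume $V$ reduced, since $V^\an=(V_{\mathrm{red}})^\an$ as topological spaces. Let $V_1,\dots,V_r$ be the finitely many irreducible components of $V$. For $i\ne j$, the set $(V_i\cap V_j)^\an$ is a proper Zariski-closed subset of the irreducible analytic space $V_i^\an$, hence is nowhere dense in $V^\an$ (\cite{ducros2009}); as there are finitely many of them, their union is closed with empty interior, so $U\setminus\bigcup_{i<j}(V_i\cap V_j)^\an$ is still a nonempty open set. It is the disjoint union of the open sets $U_i:=U\cap\big(V^\an\setminus\bigcup_{j\ne i}V_j^\an\big)$, and since $V^\an\setminus\bigcup_{j\ne i}V_j^\an$ is contained in $V_i^\an$ and is nonempty ($V_i$ not being contained in the union of the other components), we may fix $i$ with $U_i\ne\emptyset$ and let $\eta$ denote the generic point of $V_i$. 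Writing $\pi\colon V^\an\to V$ for the canonical morphism, each $Z_t$ equals $\pi^{-1}(Y_t)$ for some Zariski-closed $Y_t\subseteq V$; if $\eta\in Y_t$ then $Y_t\supseteq V_i$, so $Z_t\supseteq V_i^\an$, which has nonempty interior --- contradicting that $Z_t$ has empty interior. Hence $\eta\notin Y_t$ for all $t$.

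\emph{The key point.} It now suffices to find $x\in U_i$ with $\pi(x)=\eta$: then $x$ lies in no $Z_t=\pi^{-1}(Y_t)$, so $x\in U\setminus K$. Such a point exists by the general fact that, in the analytification of an integral $F$-scheme, the points mapping to the generic point --- equivalently, the points of maximal ``Abhyankar dimension'' --- form a \emph{dense} subset. Indeed $U_i$ is a nonempty open subset of the integral analytic space $V_i^\an$, which has dimension $n:=\dim V_i$; a Zariski-closed subset of $V_i^\an$ with empty interior is proper, hence is the analytification of a proper closed subscheme of $V_i$, hence has dimension $<n$, so it contains no point of Abhyankar dimension $n$ (by the Abhyankar inequality $d(x)\le\dim_x Z$); and by Ducros's dimension theory for analytic spaces the points of Abhyankar dimension $n$ are dense in $V_i^\an$, so one of them lies in $U_i$. (Alternatively, one can avoid this machinery: by Noether normalization choose a finite surjective $f\colon V_i\to\mathbf A^n_F$, which is finite and flat over a dense open $W\subseteq\mathbf A^n_F$ by generic flatness, hence --- after analytification --- open over $W^\an$; then $f^\an$ sends the nonempty open set $U_i\cap(f^\an)^{-1}(W^\an)$ onto a nonempty open subset of $(\mathbf A^n_F)^\an$, which, by the classical description of Berkovich affine space, contains a point lying above the generic point of $\mathbf A^n_F$; any point of $U_i$ above it maps under $\pi$ to $\eta$, because $f$ is finite and $V_i$ is integral of dimension $n$.)

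\emph{Main obstacle.} The reduction to the integral case is routine; the real content is the density statement used in the last step, which expresses precisely that an open subset of $V^\an$ cannot be exhausted by an arbitrary --- in particular, possibly uncountable --- family of proper closed subvarieties. This is the one place requiring genuine non-archimedean input and not reachable by soft topology: either Ducros's theorem that the points of maximal Abhyankar dimension are dense, together with the Abhyankar inequality; or, in the hands-on route, the openness of flat morphisms of Berkovich spaces, the presence in every nonempty open of $(\mathbf A^n_F)^\an$ of a point lying above the generic point, and the bookkeeping that analytification preserves finiteness and surjectivity and is compatible with the morphisms to the underlying schemes. I expect this density fact to be the crux of the matter.
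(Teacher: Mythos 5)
Your proof is correct and follows essentially the same route as the paper's: both arguments locate, inside the hypothetical open subset $U\subset K$, a point of maximal Abhyankar dimension (using Ducros's density theorem), observe that such a point is Zariski dense in its irreducible component, and conclude that any $Z_t$ containing it would contain a whole component of $V^\an$, contradicting the empty-interior requirement in the definition of a sparse set. Your explicit reduction to the integral case and the alternative Noether-normalization route are additional bookkeeping the paper leaves implicit, but the crux you identify is exactly the paper's.
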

\begin{proof}
Let us say that a point~$x\in V^\an$ is maximally Abhyankar 
if the rational rank of the value group of~$\mathscr H(x)$ is equal to
$\dim_x(V^\an)$.
If $V$ is irreducible, then maximally Abhyankar points are dense in~$V^\an$;
moreover, each of them is Zariski dense.
Let $K$ be a sparse set in~$V^\an$; write $K=\bigcup_t Z_t$ as above.
Let us argue by contradiction and 
let $U$ be a non-empty subset of~$V^\an$ contained in~$K$.
By what precedes, there exists a maximally Abhyankar point $x\in U$.
Let $t\in T$ be such that $x\in Z_t$. Then $Z_t$ contains the
Zariski closure of~$x$ in~$V^\an$, so that $Z_t$ contains an irreducible
component of~$V^\an$, contradicting the definition of a sparse set.
\end{proof}

\begin{lemm}\label{lemm.sparse-ext}
Let $F'$ be an algebraically closed complete extension of~$F$,
let $q\colon V_{F'}^\an\to V^\an$ be the base change morphism.
Let $K$ be a closed sparse subset of~$V^\an$ 
and let $K'=q^{-1}(K)$. Then $K'$ is sparse.
\end{lemm}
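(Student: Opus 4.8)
The plan is to transport the defining data of sparseness along the base change morphism $q\colon V_{F'}^\an\to V^\an$. Write $K=\bigcup_{t\in T}Z_t$, where each $Z_t$ is a Zariski-closed subset of~$V^\an$ with empty interior. Since base change to~$F'$ commutes with forming analytifications and with taking Zariski closures of algebraic subvarieties, $q^{-1}(Z_t)$ is again Zariski-closed in~$V_{F'}^\an$; explicitly, if $Z_t = Y_t^\an$ for a closed subscheme $Y_t\subset V$, then $q^{-1}(Z_t) = (Y_t\otimes_F F')^\an$. Setting $Z' \subset V_{F'}^\an\times T$ to be the union of the $q^{-1}(Z_t)\times\{t\}$, we obviously get $K' = q^{-1}(K) = \bigcup_{t\in T} q^{-1}(Z_t)$, so the only thing to check is that each $q^{-1}(Z_t)$ has empty interior in~$V_{F'}^\an$.

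The key step is therefore: \emph{a Zariski-closed subset of~$V_{F'}^\an$ of the form $q^{-1}(Z)$, with $Z$ Zariski-closed of empty interior in~$V^\an$, has empty interior.} Equivalently, if $Y\subsetneq V$ is a proper closed subscheme (we may reduce to this case since $Z$ having empty interior forces $\dim Y_t < \dim_x V$ at every point, hence $Y_t$ to be contained in no irreducible component of~$V$), then $(Y_{F'})^\an$ has empty interior in~$(V_{F'})^\an$. I would argue by a dimension/density argument parallel to the proof of Lemma~\ref{lemm.sparse-empty-interior}: maximally Abhyankar points of~$V_{F'}^\an$ are dense, each is Zariski-dense in its component, and such a point cannot lie in the interior of a proper Zariski-closed subset. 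Alternatively, and perhaps more cleanly, one can note that $q$ is surjective and that the interior of $q^{-1}(Z)$ maps under~$q$ into a subset of~$V^\an$ which, using that $q$ is open (base change morphisms between Berkovich analytifications of finite-type schemes are open, as $V_{F'}\to V$ is flat), would produce a nonempty open subset of~$V^\an$ contained in~$Z$ — contradicting that $Z$ has empty interior.

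The main obstacle is precisely the openness (or the Abhyankar-density) input needed in that last step: one must invoke that $q\colon V_{F'}^\an\to V^\an$ is an open map, which relies on flatness of $V_{F'}\to V$ and the behaviour of Berkovich spaces under extension of the ground field. Granting this standard fact, the rest is formal: $K' = \bigcup_{t\in T} q^{-1}(Z_t)$ exhibits $K'$ as sparse with the same index set~$T$, the auxiliary subset being $Z' = \bigsqcup_{t} q^{-1}(Z_t)\times\{t\}\subset V_{F'}^\an\times T$. (The hypothesis that $K$ be \emph{closed} and $F'$ algebraically closed is not needed for this argument, but is harmless; it is presumably imposed for uniformity with later uses.)
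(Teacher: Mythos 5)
Your proposal follows the same route as the paper, whose entire proof is the one-line observation that $K'=\bigcup_{t\in T}(Z_t)_{F'}^\an$ exhibits $K'$ as sparse; the extra work you do to verify that each $q^{-1}(Z_t)$ still has empty interior (via openness of the ground-field-extension map, or via density of maximally Abhyankar points as in lemma~\ref{lemm.sparse-empty-interior}) is exactly the detail the authors leave implicit, and either of your justifications is sound. (Only a cosmetic caveat: for non-irreducible $V$, ``$Y\subsetneq V$ proper'' is not equivalent to ``$Y^\an$ has empty interior''---the right condition is that $Y$ contains no irreducible component---but your actual argument never uses that reduction.)
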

\begin{proof}
Indeed, if $K=\bigcup_{t\in T}Z_t^\an$ is a
description of the sparse set~$K$,
then $K'=\bigcup_{t\in T} (Z_t)_{F'}^\an $
shows that $K'$ is sparse as well.
\end{proof}

\begin{lemm}\label{lemm.curve-sparse}
Let us assume that $K$ is sparse, and 
let $C\subset V$ be a geometrically irreducible curve 
such that $C^\an\not\subset K$.
Then $C^\an\setminus K$ is connected.
\end{lemm}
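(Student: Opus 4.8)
The plan is to reduce the statement to the case where $C$ itself is the whole ambient variety, or rather to leverage the structure of $K\cap C^\an$ as a sparse subset of a curve. First I would observe that $C^\an$ is a (one-dimensional) $F$-analytic curve, and since $K$ is sparse in $V^\an$, the intersection $K\cap C^\an$ is sparse in $C^\an$: indeed, writing $K=\bigcup_{t\in T}Z_t$ with each $Z_t$ Zariski-closed with empty interior in $V^\an$, each $Z_t\cap C^\an$ is Zariski-closed in $C^\an$, and because $C^\an\not\subset K$ we have $C^\an\not\subset Z_t$ for each $t$, so $Z_t\cap C^\an$ is a proper Zariski-closed subset of the irreducible curve $C^\an$, hence finite, hence has empty interior. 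Thus it suffices to prove: if $C$ is a geometrically irreducible curve over $F$ and $K\subset C^\an$ is sparse, then $C^\an\setminus K$ is connected.

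Next I would pass to the algebraic closure. By lemma~\ref{lemm.sparse-ext}, after base change to an algebraically closed complete extension $F'$ of $F$, the preimage $q^{-1}(K)$ is still sparse in $C_{F'}^\an$, and the base change map $q\colon C_{F'}^\an\to C^\an$ is surjective, proper, and has connected fibers (the fibers being the Berkovich spectra of residue field extensions, which are connected). So if $C_{F'}^\an\setminus q^{-1}(K)$ is connected, then its image $C^\an\setminus K$ is connected as well. Hence we may assume $F$ algebraically closed, so that $C^\an$ is the Berkovich analytification of a geometrically connected smooth (after normalizing, which only removes finitely many points and does not affect the argument) curve; in particular $C^\an$ is a connected, locally connected, arcwise connected topological space, and more importantly it has the structure of a real graph-like space (an $\R$-tree locally, a ``Berkovich curve'') in which removing a point disconnects it into at most countably many pieces, and removing a point of the dense subset of type-2 (or type-4) points has controlled local behavior.

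The heart of the matter is then the following purely topological statement about Berkovich curves: removing a sparse (in particular, ``small'') subset $K$ from the connected space $C^\an$ leaves it connected. I would argue as follows. Suppose $C^\an\setminus K = U_1\sqcup U_2$ with $U_1,U_2$ nonempty, relatively open, disjoint. Pick $x_1\in U_1$ and $x_2\in U_2$. Since $C^\an$ is arcwise connected, choose an injective path $\gamma\colon [0,1]\to C^\an$ from $x_1$ to $x_2$; its image is a compact subtree, in fact a finite metric graph or an arc. Along $\gamma$ there is a first parameter $s$ at which $\gamma$ leaves the closed set $\overline{U_1}$; the point $y=\gamma(s)$ lies in the boundary, so $y\in K$ (since $C^\an\setminus K$ is partitioned into the two open sets, any point not in $K$ is interior to $U_1$ or $U_2$). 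So the path must cross $K$. This shows $K$ separates $x_1$ from $x_2$ along every arc; I then want to derive a contradiction from the fact that a sparse set — being a countable-type union of finite sets, hence ``$0$-dimensional'' in a strong sense, with empty interior and not locally separating — cannot separate a curve. Concretely: the key local fact is that for a point $y$ of a Berkovich curve over an algebraically closed field, and any neighborhood basis, $y$ has arbitrarily small connected neighborhoods $N$ such that $N\setminus\{y\}$ is still connected unless $y$ is a ``branch point'' — but the set of branch points together with any given sparse $K$ is still not locally separating because...

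Here is where I expect the main obstacle, and I would instead take the cleaner route used in Berkovich's book and by Ducros: a sparse subset of a curve is contained in a countable union of finite sets $F_n$, and removing a finite set of points from $C^\an$ keeps it connected when those points are of type $2$ or $4$ — no, that is false, removing a type-$1$ (rigid) point from $\P^{1,\an}$ keeps it connected but removing a type-$2$ point disconnects it. So the correct statement must use that $K$ has empty interior in a stronger, ``Baire-type'' sense together with the structure of the curve. The honest approach, and the one I would commit to: reduce (via the arc argument above) to showing that an arc $I\subset C^\an$ minus a sparse subset is still connected in $C^\an$ — equivalently that $K$ cannot contain a ``cut set'' separating the two endpoints. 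Use that $C^\an\setminus K$, being the complement of a sparse set, is dense (lemma~\ref{lemm.sparse-empty-interior}) and that $C^\an$ is locally connected; if $C^\an\setminus K=U_1\sqcup U_2$, then $\overline{U_1}\cup\overline{U_2}=C^\an$ and $\overline{U_1}\cap\overline{U_2}\subset K$. Now $\overline{U_1}\cap\overline{U_2}$ is a closed subset of $C^\an$ contained in the sparse set $K$, and it is nonempty (as $C^\an$ is connected) — and it is a topological ``cut set''. The contradiction comes from a dimension count: a cut set of a curve must contain a point that locally separates, forcing it to be ``$1$-codimensional'' at that point, i.e. to contain a point around which it has nonempty interior in the local tree structure — but I would need that any closed subset of $C^\an$ that separates contains a Zariski-closure of a type-$2$ point, which is $0$-dimensional, yet the separation forces infinitely many such points accumulating, contradicting the local finiteness built into sparseness via the maximally-Abhyankar-density argument of lemma~\ref{lemm.sparse-empty-interior}. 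So the main obstacle is isolating exactly this local-separation mechanism for Berkovich curves and making the accumulation argument precise; modulo that, the reduction steps (restriction to $C$, base change to $F'$) are routine.
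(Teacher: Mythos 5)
Your reductions (restricting the sparse set to $C^\an$, noting that each $Z_t^\an\cap C^\an$ is a finite set because $C$ is irreducible and $C\not\subset Z_t$, and base-changing to an algebraically closed field via lemma~\ref{lemm.sparse-ext}) match the paper's and are fine. But the core of your argument is missing, and you say so yourself. The key fact you needed — and in fact wrote down in passing before discarding it — is that $K\cap C^\an$ consists entirely of \emph{rigid} (type-1) points, since each $Z_t^\an\cap C^\an$ is a finite set of closed points of the curve. The paper then invokes the structure theory of smooth connected Berkovich curves over an algebraically closed field as real graphs (Berkovich's book, chapter~4): rigid points are \emph{endpoints} of that graph, and removing any set of endpoints from a connected real graph leaves it connected. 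That single observation finishes the proof; there is no need for the separation/arc/cut-set machinery you attempt, and no need for any ``Baire-type'' or accumulation argument. (The paper also first adds the finite singular locus of $C$ to $K$ so as to assume $C$ smooth, which you implicitly do by normalizing but should handle the same way.)

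The concrete gap is therefore this: you correctly observed that removing a rigid point does not disconnect $\P^{1,\an}$ while removing a type-2 point does, but you failed to notice that your own reduction guarantees \emph{only rigid points are being removed}, and instead went looking for a general topological principle about sparse cut sets that you could not pin down. Your proposed route via $\overline{U_1}\cap\overline{U_2}\subset K$ and a ``local separation forces positive dimension'' claim is not made precise and, as stated, would not go through without exactly the endpoint fact above (a single type-2 point is a perfectly good zero-dimensional cut set, so no dimension count alone can work). The proof is incomplete as written.
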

\begin{proof}
Using lemma~\ref{lemm.sparse-ext}, we reduce to the case where 
$F$ is algebraically closed; we may moreover assume
that $C$ is reduced.
Let $K=\bigcup_{t\in T} Z_t^\an$ be a description of~$K$ as above. 
Up to adding the singular locus of~$C$ to~$K$, 
we may assume that $C$ is smooth.
By assumption, for every $t\in T$, $C\not\subset Z_t^\an$;
consequently, $Z_t^\an\cap C^\an$ consists of rigid points of~$C^\an$,
hence $K\cap C^\an$ consists of rigid points of~$C^\an$.
In the topological description of 
smooth geometrically irreducible analytic curves 
as real graphs (\cite{berkovich1990}, chapter~4),
their rigid points are endpoints, 
so that $C^\an\setminus (K\cap C^\an)$  is connected as well.
\end{proof}
\begin{prop}\label{lemm.irred}
Let $F$  be a complete non-archimedean valued  field.
Let $V$ be an $F$-scheme of finite type which is
geometrically connected (resp. geometrically irreducible)
and let $K$ be a closed sparse subset of $V^\an$.
Then $V^\an\setminus K$ is a geometrically connected 
(resp. geometrically irreducible) analytic space.
\end{prop}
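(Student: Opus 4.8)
The plan is to reduce to the case where $F$ is algebraically closed, to dispatch the irreducible case by a soft argument, and to treat the connected case using Lemma~\ref{lemm.curve-sparse}.

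First I would reduce to $F$ algebraically closed: geometric irreducibility (resp. connectedness) of an analytic space can be tested after base change to a complete algebraically closed extension $F'$ of~$F$, and $(V^\an\setminus K)_{F'}=V_{F'}^\an\setminus q^{-1}(K)$, where $q\colon V_{F'}^\an\to V^\an$ is the projection; here $V_{F'}$ is still geometrically irreducible (resp. connected) and $q^{-1}(K)$ is still a closed sparse subset by Lemma~\ref{lemm.sparse-ext}. Once $F$ is algebraically closed, Lemma~\ref{lemm.sparse-empty-interior} gives that $K$ has empty interior, so $V^\an\setminus K$ is a dense open subset of~$V^\an$. For the irreducible case this already finishes matters: $V^\an$ is an irreducible analytic space (\cite{ducros2009}), and a non-empty open subspace of an irreducible analytic space is irreducible --- its unique irreducible component being the trace of the one of the ambient space (loc.\ cit.) --- so $V^\an\setminus K$ is irreducible, hence, by the reduction, geometrically irreducible.

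The connected case is where the work lies. Here $V^\an$ is connected, and I want to show that removing the closed, empty-interior set~$K$ does not disconnect it. The rigid points of $V^\an$ outside $K$ form a dense --- in fact Zariski-dense --- subset $D$ of~$V^\an$; since any separation of the open set $V^\an\setminus K$ would split~$D$, it is enough to show that any two points $x,y\in D$ lie in a common connected subset of $V^\an\setminus K$. I would join $x$ to $y$ by a finite chain $C_1,\dots,C_N$ of irreducible (hence, over the algebraically closed~$F$, geometrically irreducible) curves in~$V$ with $x\in C_1$, $y\in C_N$, consecutive curves meeting, and with each $C_k$ and each non-empty intersection $C_k\cap C_{k+1}$ meeting~$D$. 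When $V$ is irreducible of dimension at least~$2$ a single general curve section of~$V$ through $x$ and~$y$ suffices: it is irreducible by Bertini and meets $K$ in at most finitely many rigid points; the general case is then obtained by working through the irreducible components of~$V$ one at a time and choosing the junction points inside~$D$. Since each $C_k$ meets $D$, we have $C_k^\an\not\subseteq K$, so Lemma~\ref{lemm.curve-sparse} shows that $C_k^\an\setminus K$ is connected; consecutive ones overlap at a point of $D\subseteq V^\an\setminus K$, while $x$ and $y$ lie in $C_1^\an\setminus K$ and $C_N^\an\setminus K$ respectively. Hence $\bigcup_k(C_k^\an\setminus K)$ is a connected subset of $V^\an\setminus K$ containing $x$ and~$y$, and then, by the reduction step, $V^\an\setminus K$ is geometrically connected.

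The hard part is precisely the construction of this chain with junction points lying off~$K$. Sparseness allows $K$ to contain positive-dimensional --- even divisorial --- subvarieties, so one cannot naively route through the intersections of the irreducible components of~$V$; what saves the argument is that the rigid points off~$K$ stay Zariski-dense in every component, together with the Bertini-type fact that an irreducible variety of dimension $\geq 2$ has irreducible general curve sections through any prescribed pair of points. Everything else is formal, resting on Lemmas~\ref{lemm.sparse-empty-interior}, \ref{lemm.sparse-ext} and~\ref{lemm.curve-sparse}.
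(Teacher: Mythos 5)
Your treatment of the connected case follows the paper's route in all essentials: join two points of $V^\an\setminus K$ by irreducible curves not contained in~$K$ and apply lemma~\ref{lemm.curve-sparse} to each. (The paper avoids your appeal to density of rigid points off~$K$ — which silently requires the valuation to be non-trivial — by base-changing to a field over which the two given points become rigid, and it simply cites Mumford for an irreducible curve through them. Be aware that what you call ``the hard part'', routing through the components of a connected but reducible~$V$ with junction points off~$K$, is not merely hard: nothing prevents a sparse~$K$ from containing $C_k\cap C_{k+1}$ entirely, e.g.\ $K$ equal to the crossing point of two lines. The paper is equally terse here, but it only ever invokes the connectedness statement for irreducible, indeed normal, varieties.)

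The genuine gap is in the irreducible case. Your ``soft argument'' rests on the claim that a non-empty (dense) open subspace of an irreducible analytic space is irreducible, with a unique irreducible component induced from the ambient one. This is false in non-archimedean analytic geometry: two disjoint open disks form an open subset of the irreducible space $(\mathbf A^1)^\an$ with two components, and even $(\mathbf A^1)^\an$ minus the Gauss point is a \emph{dense} open subset of an irreducible space that is disconnected, hence reducible. (Of course a single type-2 point is not sparse, so this does not contradict the proposition — but it does refute the lemma you are using.) Preservation of irreducibility under removal of a sparse set is precisely the non-trivial content of the statement, so your argument begs the question. The paper's proof supplies the missing idea: pass to the normalization $p\colon W\to V$, note that $p^{-1}(K)$ is sparse in $W^\an$ and that $W^\an\setminus p^{-1}(K)$ is the normalization of $V^\an\setminus K$, apply the already-established \emph{connected} case to conclude that this normalization is geometrically connected, and then use the criterion (Ducros) that an analytic space with geometrically connected normalization is geometrically irreducible. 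Without this normalization step, or some equivalent, your proof of the irreducible half does not go through.
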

The particular case $K=\emptyset$ implies the ``\textsc{gaga}''-type consequence
that if $V$ is geometrically connected (resp. geometrically irreducible),
then so is $V^\an$. 
\begin{proof}
Using lemma~\ref{lemm.sparse-ext}, we reduce to the
case where $F$ is algebraically closed.
By assumption, $V$ is connected.
Let us prove that $V^\an\setminus K$ is connected.
Let $x,y\in V^\an\setminus K$. 
Let $F'$ an algebraically closed complete valued
field containing both~$\mathscr H(x)$ and~$\mathscr H(y)$,
and view $x,y$ as elements of~$V(F')$;
let $q\colon V_{F'}^\an\to V^\an$ be the base change morphism
and let $K'=q^{-1}(K)$; by lemma~\ref{lemm.sparse-ext},
this is a sparse subset of~$V_{F'}^\an$.
By \cite[p. 56]{mumford74}, 
there exists an irreducible curve $C\subset V_{F'}$
which passes through~$x$ and~$y$. Then $C^\an$ is connected.
One has $C\not\subset K'$, by definition of~$K'$;
it follows from lemma~\ref{lemm.curve-sparse}
that $C^\an \setminus (K'\cap C^\an)$ is connected.
Consequently,
 $x$ and~$y$ belong to the same component of~$V_{F'}^\an\setminus K'$,
hence their images in~$V^\an\setminus K$ belong
to the same connected component.
This proves that $V^\an\setminus K$ is connected.
%

Let us now assume that $V$ is geometrically irreducible.
The normalization morphism $p\colon W\to V$ is finite, 
and 
$W$ is geometrically connected.  
Since $p^{-1}(K)$ is a sparse subset of $W^\an$,
it follows from the first part
of the lemma that $W^\an\setminus p^{-1}(K)$ is geometrically connected.
Since $W^\an$ is the normalization of $V^\an$
(\cite{ducros2016}, lemma~2.7.15),
then
$W^\an\setminus p^{-1}(K)=p^{-1}(V^\an\setminus K)$ is the normalization
of $V^\an\setminus K$. 
By theorem~5.17 of~\cite{ducros2009}, 
this implies that $V^\an\setminus K$ is geometrically irreducible.
\end{proof}

\begin{coro}\label{coro.irr-comp-1}
Let $F$ be a complete valued field, let $V$ be an $F$-scheme of finite
type and let~$K$ be a closed sparse subset of~$V^\an$.
The set of irreducible components of~$V^\an\setminus K$ is finite.
If $V$ is equidimensional, then each of them has dimension~$\dim(V)$.
\end{coro}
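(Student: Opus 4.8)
The plan is to deduce Corollary~\ref{coro.irr-comp-1} from Proposition~\ref{lemm.irred} by a standard dévissage over the irreducible components of~$V$. First I would reduce to the case where $V$ is irreducible: writing $V^\an=\bigcup_i V_i^\an$ for the (finitely many) irreducible components $V_i$ of~$V$, one has $V^\an\setminus K=\bigcup_i (V_i^\an\setminus K)$, and the restriction $K\cap V_i^\an$ of a sparse set is visibly sparse in~$V_i^\an$ (intersect each slice $Z_t$ with $V_i^\an$). Since the irreducible components of a finite union of analytic spaces are among the irreducible components of the pieces, it suffices to bound the number of irreducible components of each $V_i^\an\setminus K$, and to compute their dimension when $V$ (hence each $V_i$) is equidimensional of dimension~$d$.

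So assume $V$ is irreducible of dimension~$d$. By Proposition~\ref{lemm.irred} (applied to the geometrically irreducible situation after passing to $\widehat{\overline F}$, or directly using the connectedness part together with normalization as in that proof), the space $V^\an\setminus K$ is connected; but connectedness alone does not give irreducibility over a non–algebraically closed base, so I would instead invoke the proposition in the form it is actually proved: the normalization $W\to V$ is finite with $W$ normal and geometrically connected, $p^{-1}(K)$ is sparse in~$W^\an$, and $W^\an\setminus p^{-1}(K)$ is the normalization of $V^\an\setminus K$. A connected normal analytic space is irreducible, so $W^\an\setminus p^{-1}(K)$ has finitely many (in fact, the point is to see it has finitely many) irreducible components; and since an analytic space and its normalization have the same irreducible components (via \cite{ducros2009}, theorem~5.17, already used in the proof of Proposition~\ref{lemm.irred}), the finiteness for $V^\an\setminus K$ follows once it is known for the normalization. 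Thus the real content is finiteness of the set of irreducible components of $W^\an\setminus p^{-1}(K)$ for $W$ normal and connected.

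For that, I would use that $W^\an$ itself has finitely many irreducible components (it is the analytification of a finite-type scheme), all of dimension~$d$ if $W$ is equidimensional of dimension~$d$; this is a basic fact about Berkovich analytifications, see \cite{ducros2009}. Removing the closed sparse set~$K$ cannot create infinitely many components: by Lemma~\ref{lemm.sparse-empty-interior}, $K$ has empty interior, so $V^\an\setminus K$ is dense in~$V^\an$, and each irreducible component $Y$ of $V^\an\setminus K$ has Zariski closure an irreducible Zariski-closed subset of~$V^\an$ of the same dimension $\dim(Y)$; this closure meets the open dense locus where $V^\an$ is locally irreducible of dimension~$d$ only if $\dim(Y)=d$ and the closure is an irreducible component of~$V^\an$. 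Hence distinct components of $V^\an\setminus K$ have distinct Zariski closures among the finitely many irreducible components of~$V^\an$, which both bounds their number and forces $\dim(Y)=d$ in the equidimensional case.

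The main obstacle is not any single deep input but rather keeping the non–algebraically-closed base under control: the clean statements (a connected normal analytic space is irreducible; an analytic space has finitely many irreducible components of the expected dimension; the normalization has the same components) are all available in \cite{ducros2009}, and the only care needed is to route everything through the normalization exactly as in the proof of Proposition~\ref{lemm.irred}, rather than trying to argue irreducibility of $V^\an\setminus K$ directly. I would therefore phrase the proof as: reduce to $V$ irreducible; apply Proposition~\ref{lemm.irred} and the normalization comparison to reduce to $W$ normal connected; and then observe that each component of $W^\an\setminus p^{-1}(K)$ has, by Lemma~\ref{lemm.sparse-empty-interior} and the dimension formula for Zariski closures, full dimension and a distinct Zariski closure among the finitely many components of~$W^\an$.
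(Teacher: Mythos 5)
Your reduction to $V$ irreducible is fine, but the finiteness argument you give afterwards has a genuine gap. The step ``distinct components of $V^\an\setminus K$ have distinct Zariski closures among the finitely many irreducible components of $V^\an$'' is not justified, and it is false as a general principle: two disjoint open disks in $(\mathbf A^1)^\an$ are distinct irreducible components of their union, yet both have Zariski closure $\mathbf A^1$. Nothing in your closure-counting argument prevents infinitely many components of $V^\an\setminus K$ from all being Zariski dense in the same irreducible component of $V^\an$; ruling this out is precisely the content of proposition~\ref{lemm.irred}, whose proof (joining two points by a curve and using the real-graph structure of analytic curves to see that removing the rigid points of a sparse set does not disconnect) your finiteness step never actually uses. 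There is also a smaller error upstream: the normalization of an irreducible $F$-scheme need not be geometrically connected (already $V=\operatorname{Spec}K$ for a nontrivial finite extension $K/F$ fails), so you cannot feed it to proposition~\ref{lemm.irred}, which requires the geometric hypotheses.

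The paper's proof handles both difficulties at once by base-changing to $E$, the completion of an algebraic closure of~$F$: every irreducible component~$Z$ of~$V_E$ is then geometrically irreducible, proposition~\ref{lemm.irred} shows that each $\Omega_E\cap Z^\an$ is irreducible, and these finitely many sets are exactly the irreducible components of~$\Omega_E$. Finiteness over~$F$ then descends by \cite[lemme~4.25]{ducros2009}, and the dimension statement follows from proposition~4.22 of the same reference. If you want to keep your structure, you must insert this base change and exploit the irreducibility conclusion of proposition~\ref{lemm.irred} directly, rather than trying to bound the components by their Zariski closures.
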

\begin{proof}
We may assume that $V$ is irreducible.  Let $\Omega=V^\an\setminus K$.
Let $E$ be the completion of an algebraic closure of~$F$. 
By proposition~\ref{lemm.irred},
$\Omega_E \cap Z^\an$ is  irreducible, for every irreducible
component~$Z$ of~$V_E$, and the family of these intersections
is the family of irreducible components of~$\Omega_E$.
The finiteness statement then follows from~\cite[lemme~4.25]{ducros2009},
while the one about dimension follows
from proposition~4.22 of~\cite{ducros2009}.
\end{proof}

\begin{coro}\label{coro.irr-comp-2}
Let $F$ be a complete valued field, let $V$ be an 
irreducible $F$-scheme of finite
type and let~$K$ be a closed sparse subset of~$V^\an$.
Let $W$ be an irreducible component of $V^\an\setminus K$.
If $W$ is geometrically irreducible,
then $V$ is geometrically irreducible as well, 
one has $W=V^\an\setminus K$, and $W$ is topologically dense in~$V^\an$.
\end{coro}
\begin{proof}
Let $E$ be a complete algebraically closed extension of~$F$,
let $V_1,\dots,V_n$ be the irreducible components of~$V_E$;
let $L$ be the preimage of~$K$ in~$V_E$; 
it is a closed sparse subset of~$V_E^\an$ (lemma~\ref{lemm.sparse-ext}).
Consequently, $L_j=V_j^\an\cap L$ is a closed sparse subset
of~$V_j^\an$, for every~$j$. By proposition~\ref{lemm.irred},
$W_j=V_j^\an\setminus L_j$ is geometrically irreducible.
The automorphism group $\Aut(E/F)$  acts transitively
on the set $\{V_1,\dots,V_n\}$ of irreducible components  of~$V_E$,
hence on the set $\{W_1,\dots,W_n\}$ of irreducible components
of~$V_E^\an\setminus L$. 
Since $V_E$ is geometrically irreducible, there exists an index~$j$
such that $W_E=W_j$; then $\Aut(E/F)$ fixes~$W_j$, so that $n=1$ and $j=1$.
This proves that $V$ is geometrically irreducible.
By proposition~\ref{lemm.irred}, one has $W=V^\an\setminus K$.
By lemma~\ref{lemm.sparse-empty-interior}, $W$ is topologically dense
in~$V^\an$. 
\end{proof}

\begin{prop}\label{prop.closure-analytic}
Let~$F$ be a finite extension of $\Q_p$.
Let~$A$ be an affine scheme of finite type over~$F$
and let $\Omega\subset A^\an$ be the complement
of a closed sparse subset.
Let~$X$ be a closed analytic subspace of~$\Omega$.
Let~$V$ be a $\Q_p$-semi-algebraic subset of~$A(F)$, contained in~$X(F)$,
and let~$W$ be its Zariski closure in~$A$.

One has  $W^\an\cap\Omega \subset X$.
\end{prop}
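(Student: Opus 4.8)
The plan is to dispose of trivial cases, reduce to $W$ irreducible, upgrade this to \emph{geometric} irreducibility by using that $V$ consists of $F$-rational points, and then convert the semi-algebraicity of $V$ into a statement that an auxiliary analytic subspace has full dimension.

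First I would reduce to the case where $W$ is irreducible. If $W = W_1\cup\dots\cup W_r$ is the decomposition into irreducible components, then each $V\cap W_i$ is again $\Q_p$-semi-algebraic, is contained in $X(F)$, and --- since $W$ is the Zariski closure of $V$ --- has $W_i$ as its Zariski closure; moreover $W^\an=\bigcup_i W_i^\an$, so it suffices to treat each $W_i$. Assume then $W$ irreducible, of dimension $d:=\dim W$. If $W^\an\subset K$ then $W^\an\cap\Omega=\emptyset$ and there is nothing to prove; otherwise, writing $K=\bigcup_t Z_t^\an$ as in the definition of a sparse set, each $Z_t\cap W$ is a \emph{proper} closed subvariety of $W$, so $(Z_t\cap W)^\an$ has dimension $<d$ and hence, by corollary~\ref{coro.irr-comp-1} applied with $K=\emptyset$ (the irreducible components of $W^\an$ all having dimension $d$), has empty interior in $W^\an$; thus $K\cap W^\an$ is a closed sparse subset of $W^\an$ and $\Omega':=W^\an\cap\Omega=W^\an\setminus(K\cap W^\an)$.

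Next I would show that $W$ is geometrically irreducible. Let $\bar F$ be an algebraic closure of $F$, and let $W'_1,\dots,W'_s$ be the irreducible components of $W_{\bar F}$; since $W$ is irreducible over $F$, the group $\Aut(\bar F/F)$ permutes them transitively. If $s\geq2$, then for every $v\in V$, which is an $F$-rational point of $A$ and hence fixed by $\Aut(\bar F/F)$, the set $\{i : v\in W'_i\}$ is non-empty and $\Aut(\bar F/F)$-stable, so equals $\{1,\dots,s\}$; thus $V\subset W'_1\cap\dots\cap W'_s$, a proper closed subvariety of $W$ of dimension $<d$, which contradicts the Zariski-density of $V$ in $W$. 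Hence $s=1$, and proposition~\ref{lemm.irred} applied to $W$ and the sparse set $K\cap W^\an$ shows that $\Omega'$ is geometrically irreducible; by corollary~\ref{coro.irr-comp-1} it is of pure dimension $d$. Finally, setting $X':=X\cap W^\an$, a closed analytic subspace of $\Omega'$ with $X'(F)\supset V$: on the one hand, $V$ is $\Q_p$-semi-algebraic with Zariski closure $W$, so by cell decomposition and the standard dimension theory of $p$-adic (sub)analytic sets $\dim V=\dim W=d$, and $V$ contains a subset which is smooth of dimension $d$; on the other hand, $X'$ is, locally on $\Omega'$, the common zero locus of finitely many analytic functions, hence locally a rigid $F$-subanalytic set, so by lemma~\ref{lemm.remark} the set $X'(F)$ is $F$-subanalytic, of dimension at most $\dim X'$. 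Since $X'(F)$ contains a subset of $V$ that is smooth of dimension $d$, this forces $\dim X'\geq d$, hence $\dim X'=d$ because $X'\subset W^\an$. But a closed analytic subspace of the irreducible, $d$-dimensional space $\Omega'$ that itself has dimension $d$ cannot be a proper closed analytic subset; therefore $X'=\Omega'$, i.e. $W^\an\cap\Omega\subset X$, which is the assertion.

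The step I expect to be the crux is this last dimension comparison: the semi-algebraicity of $V$ --- which is essential, since Zariski-density of $V$ in $W$ alone does \emph{not} suffice, as a convergent sequence of rational points already shows --- has to be turned into the statement that the \emph{analytic} subspace $X'$ is $d$-dimensional. This is exactly where the $p$-adic subanalytic machinery recalled in section~\ref{sec.definability} enters, and one must take some care in passing between $X'$, its local rigid-subanalytic models and its set of $F$-rational points, and in keeping track of dimensions throughout. A more routine point is the verification that $K\cap W^\an$ is again sparse, so that the irreducibility results of the previous section do apply to $W$.
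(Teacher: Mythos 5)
Your reductions are sound and essentially identical to the paper's: the passage to $W$ irreducible, the verification that $K\cap W^\an$ is a sparse subset of~$W^\an$ so that proposition~\ref{lemm.irred} applies, and the geometric irreducibility of~$W$ obtained from the Zariski density of the $F$-rational set~$V$. The gap is in the final dimension comparison, and it is not a technicality: the claim that ``$\dim V=\dim W=d$ and $V$ contains a subset which is smooth of dimension~$d$'' is false for a set that is merely $\Q_p$-semi-algebraic. Smoothness of a $\Q_p$-semi-algebraic subset of~$A(F)$ is smoothness in the $\Q_p$-sense: the point has a neighborhood isomorphic to a ball in~$\Q_p^e$, where $e$ is the $\Q_p$-dimension, not a ball in~$F^d$. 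Take $A=\mathbf A^2$, $F\neq\Q_p$ and $V=\Z_p^2\subset F^2$: this is $\Q_p$-semi-algebraic and Zariski dense in $W=\mathbf A^2$ (so $d=2$), but it contains no $F$-ball of any positive dimension; and for $V=\mathscr O_F^2$ the $\Q_p$-dimension is $2[F:\Q_p]\neq d$, so the asserted equality of dimensions also fails. Consequently the key inequality $\dim X'\geq d$ does not follow from your argument: from $V\subset X'(F)$ one only controls $\Q_p$-dimensions, which yields $[F:\Q_p]\dim X'\geq\dim_{\Q_p}V\geq d$ and nothing better. (If $V$ were assumed $F$-semi-algebraic your argument would essentially go through, but the proposition is stated, and used, for $\Q_p$-semi-algebraic sets.)

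This is precisely where the paper brings in the Weil restriction~$\mathrm R$ from~$F$ to~$\Q_p$. It forms the Zariski closure~$Z$ of~$\mathrm R(V)$ in~$\mathrm R(A)$ --- a $\Q_p$-variety of dimension $\dim_{\Q_p}V$, in general different from $\dim W$ --- shows that the induced morphism $g\colon Z_F\to W$ is dominant, and studies $Y=(g^\an)^{-1}(W^\an\cap X)$ inside~$Z_F^\an$ instead of $X\cap W^\an$ inside~$W^\an$. A top-dimensional cell $\tilde C\subset\mathrm R(V)$, open in the smooth locus of~$Z(\Q_p)$, lies in~$Y$, and at a point $\tilde x\in\tilde C$ the tangent space $\mathrm T_{\tilde x}(Y)$ is an $F$-subspace of $\mathrm T_{\tilde x}(Z_F)$ containing the $\Q_p$-form $\mathrm T_{\tilde x}(Z)$, hence equals all of $\mathrm T_{\tilde x}(Z_F)$. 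This tangent-space step is what replaces your ``$V$ contains an $F$-ball of dimension~$d$'' and forces $\dim Y=\dim Z_F$; one then descends along the dominant map~$g$ to conclude $W^\an\cap\Omega\subset X$. Some device of this kind, converting $\Q_p$-rational density into $F$-analytic fullness, is needed to close the gap you yourself identified as the crux.
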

\begin{proof}
The following proof is inspired by that of lemma~4.1 
in~\cite{pila-tsimerman2013}.

We argue by noetherian induction on~$W$, assuming that
if $W'$ is the Zariski closure of a $\Q_p$-semi-algebraic
subset~$V'$ of~$A(F)$ contained in~$X(F)$,  and if $W'\subsetneq W$,
then $(W')^\an\cap\Omega\subset X$.

First assume that $W$ is not irreducible.
Then any irreducible component~$W'$ of~$W$
is the Zariski closure in~$A$ of 
$V\cap W'(F)$, a $\Q_p$-semi-algebraic subset of~$A(F)$;
by induction, $(W')^\an\cap\Omega\subset X$,
so that $W^\an\cap \Omega\subset X$.

We may thus assume that $W$ is irreducible;
since its subset~$W(F)$ of $F$-rational points contains~$V$,
it is Zariski-dense in~$W$, so that $W$ is geometrically irreducible.

Let $K= A^\an\setminus \Omega$; by assumption, $K$ is closed and sparse;
let $K=\bigcup S_t ^\an$ be a presentation of~$K$, where for
every~$t$, $S_t$ is a Zariski-closed subset with empty interior of~$A$.
Since $W$ is irreducible and not contained in~$S_t$,
$W\cap S_t$ is a strict Zariski-closed subset of~$W$.
Consequently, $W^\an\cap K$ is a sparse subset of~$W^\an$.
By proposition~\ref{lemm.irred}, $W^\an\cap\Omega$ 
is thus a geometrically irreducible analytic space.


Let $\mathrm R$ be the Weil restriction functor from~$F$ to~$\Q_p$.
By definition, $A(F)$ is identified with $\mathrm R(A)(\Q_p)$ and
we shall write~$\mathrm R(V)$ for the image of~$V$ inside $\mathrm R(A)(\Q_p)$.
Let then $Z$ be the Zariski closure of $\mathrm R(V)$ inside $\mathrm R(A)$.

Let $Z'$ be an irreducible component of~$Z$. 
Then $Z'\cap \mathrm R(V)$ is a semi-algebraic subset of~$\mathrm R(A)$,
of the form $\mathrm R(V')$, for a unique $\Q_p$-semi-algebraic subset~$V'$ 
of~$V$. 
When~$Z'$ varies, the corresponding subsets~$V'$ cover~$V$;
we may thus choose~$Z'$ such that $V'$ is Zariski dense in~$W$.
Replacing~$V$ by~$V'$, we may assume that $Z$ is irreducible;
then it is geometrically irreducible, because
its set of $\Q_p$-points is Zariski dense.

Since $V$ is $\Q_p$-semialgebraic, the subset $\mathrm R(V)$ 
of $\mathrm R(A)(\Q_p)$ is semialgebraic,
hence the dimension of~$Z$ coincides with the dimension of~$V$
as a $\Q_p$-semialgebraic subset of~$A(F)$.
Consequently, $\dim_{\mathrm{Zar}}(Z)=\dim (Z(\Q_p))=\dim (\mathrm R(V))$.

Since $W$ is a Zariski closed subset of~$A$ containing~$V$,
the subscheme~$\mathrm R(W)$ is Zariski closed in~$\mathrm R(A)$
and contains~$\mathrm R(V)$, so that $Z\subset \mathrm R(W)$.
By Weil restriction, 
the inclusion $Z\to\mathrm R(W)$
corresponds to a morphism $g\colon Z_F\to W$.
Let $x\in A(F)$ and let $\tilde x\in \mathrm R(A)(\Q_p)$ be the corresponding
point; if $x\in V$, then $\tilde x\in \mathrm R(V)\subset Z(\Q_p)$,
hence $\tilde x\in Z_F(F)$. By the definition of the Weil restriction
functor, one has $g(\tilde x)=x$.
In particular, the image of~$Z_F(F)$ under~$g$ contains~$V$, 
hence $g$ is dominant, by definition of~$W$.

The morphism~$g$ induces an analytic morphism 
$g^\an\colon Z_F^\an \to W^\an\subset A^\an$.
The inverse image of~$W^\an\cap\Omega$ is the complement of
a closed sparse subset of~$Z_F^\an$; 
since $Z_F^\an$ is geometrically irreducible,
corollary~\ref{coro.irr-comp-1}
implies that
$(g^\an)^{-1}(W^\an\cap\Omega)$ is geometrically irreducible,
of dimension~$\dim(Z_F^\an)$.
Let $Y=(g^\an)^{-1}(W^\an\cap X)$; it is a Zariski closed
analytic subset of $(g^\an)^{-1}(W^\an\cap\Omega)$.

Let us admit for a moment that $\dim(Y)=\dim(Z_F)$
and let us conclude 
that $W^\an\cap\Omega\subset X$.
Since $\dim(Z_F^\an)=\dim(Z_F)=\dim((g^\an)^{-1}(W^\an\cap\Omega))$,
we see that 
\[ Y= (g^\an)^{-1}(W^\an\cap X)=(g^\an)^{-1}(W^\an\cap\Omega).\]
The morphism $g\colon Z_F\to W$ being dominant, 
its image contains a non-empty open subset~$W'$ of~$W$.
Since $W$ is geometrically irreducible, $(W')^\an$ is dense
in~$W^\an$; in particular, the image of $g^\an$ meets
any non-empty open subset of~$W^\an$.
Since $(g^\an)^{-1}(W^\an\cap (\Omega\setminus X))$ is empty,
by the preceding equality,
this implies that $W^\an\cap(\Omega\setminus X)$ is empty,
hence $W^\an\cap\Omega=W^\an\cap X$.

It remains to prove the equality $\dim(Y)=\dim(Z_F)$.

Let us consider a semi-algebraic 
cell decomposition of~$\mathrm R(A)(\Q_p)$ 
which is adapted to $\mathrm R(V)$, $Z(\Q_p)$, $Z_\sing(\Q_p)$,
and to their singular loci:
a finite partition of~$\mathrm R(A)(\Q_p)$ into ``open cells''
such that these $\Q_p$-semi-algebraic subsets
are unions of cells
(\cite{denef1986}, see also~\cite{cluckers-loeser2007}).

Let $\tilde C$ be a cell of dimension $\dim(\mathrm R(V))$ 
which is contained in~$\mathrm R(V)$.
Since $\dim(Z_\sing(\Q_p))\leq \dim(Z_\sing)<\dim(Z)=\dim(\mathrm R(V))$,
the cell~$\tilde C$ is disjoint from~$Z_\sing(\Q_p)$.
By definition
of a cellular decomposition,  $\tilde C$ is open
in~$\mathrm R(V)$ and in $(Z\setminus Z_\sing)(\Q_p)$.

Let $C$ be the subset of~$V$ corresponding to~$\tilde C$.
Since the identification of~$C$ with~$\tilde C$ provided
by the Weil restriction functor is a homeomorphism
which respects the singular loci,
$C$ is an open subset of~$V$.

Let $x$ be a point  of~$C$ and let $\tilde x$ be the corresponding
point of $\tilde C$.
By what precedes, $\mathrm R(V)$, $Z(\Q_p)$ and~$Z$ are smooth at~$\tilde x$,
so that 
$\mathrm T_{\tilde x}(\mathrm R(V))=\mathrm T_{\tilde x} (Z(\Q_p))
= \mathrm T_{\tilde x} (Z)$.
In particular, these three $\Q_p$-vector spaces have the same
dimension, equal to $\dim(T_x (V))=\dim(V)$.

Since $g(\tilde x)=x\in X$, one has $\tilde x\in Y$; 
more generally, $\tilde C\subset Y$.
The tangent space $\mathrm T_{\tilde x} (Y)$ of~$Y$ at~$\tilde x$ 
is an $F$-vector subspace
of $\mathrm T_{\tilde x} (Z_F)=(\mathrm T_{\tilde x} (Z))_F$
which contains $\mathrm T_{\tilde x}(\tilde C)=\mathrm T_{\tilde x}(Z)$.
Consequently, $\mathrm T_{\tilde x}(Y)=\mathrm T_{\tilde x}(Z_F)$.
This implies that the analytic space~$Y$ has dimension~$\dim(Z_F)$,
and concludes the proof.
%
\end{proof}

\section{Complements on \texorpdfstring{$p$}{p}-adic Schottky groups and uniformization}
\label{sec.schottky}

Let $F$ be a finite extension of~$\Q_p$. Unless precised
otherwise, analytic spaces are $F$-analytic spaces.

\subsection{}
Let $a\in F$ and $r\in\R_{>0}$; as usual, we let
$B(a,r)$ and $E(a,r)$ be the subsets of $(\mathbf A^1)^\an$
of points $x$ such that $\abs{T(x)-a}<r$ and $\abs{T(x)-a}\leq r$ respectively.
The subspace~$B(a,r)$ is called a \emph{bounded open disk}; 
we say that $E(a,r)$ is the corresponding \emph{bounded closed disk}.
If $B$ is a bounded open disk, we write $B^+$ for the corresponding
bounded closed disk.
We will say that such a disk is strict if its \emph{radius}~$r$
belongs to $\abs{F^\times}^\Q$.

To these disks, we also add the unbounded open disks $\P_1^\an\setminus E(a,r)$
and the unbounded closed disks $\P_1^\an\setminus B(a,r)$.
An unbounded disk is said to be strict if its complementary disk
is strict.

The image by an homography $\gamma\in\PGL(2,F)$
of an open (resp. closed, resp. strict) disk is again an open
(resp. closed, resp. strict) disk.

\subsection{}\label{ss.delta}
We endow $\P_1(\C_p)$ with the distance given by
\[ \delta(x,y) = \frac{\abs{x-y}}{\max(1,\abs x)\max(1,\abs y)} \]
for $x,y\in\C_p$ --- 
it is invariant under the action of $\PGL(2,\mathscr O_{\C_p})$. 
Moreover, an elementary calculation
shows that every element $g\in\PGL(2,\C_p)$ is Lipschitz for this distance
(see also Thm~1.1.1 of~\cite{rumely1989}).


\subsection{}\label{ss.lipschitz}
Let $\Gamma$ be a Schottky group in~$\PGL(2,F)$,
let $\mathscr L_\Gamma\subset \P_1(F)$ be its limit set 
and $\Omega_\Gamma=\P_1^\an\setminus \mathscr L_\Gamma$.
For any rigid point $x\in \Omega_\Gamma$, 
let $\delta_\Gamma(x)$ be the $\delta$-distance of~$x$ to~$\mathscr L_\Gamma$.

For every $\gamma\in\PGL(2,F)$,
there exists a real number~$c\geq 1$ such that 
$c^{-1}\delta_\Gamma(z)\leq
   \delta_\Gamma(\gamma\cdot z)\leq c \delta_\Gamma(z)$ 
for every rigid point~$z\in\Omega_\Gamma$.

\begin{lemm}\label{lemm.minor}
Let $\mathfrak G$ be a compact subset of~$\Omega_\Gamma$. There exists a strictly positive
real number~$c$ such that $\delta_\Gamma(x)\geq c$ for every rigid point~$x\in\mathfrak G$.
\end{lemm}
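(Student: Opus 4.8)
The statement asserts that on a compact subset $\mathfrak G$ of the open set $\Omega_\Gamma = \P_1^\an \setminus \mathscr L_\Gamma$, the distance-to-the-limit-set function $\delta_\Gamma$ is bounded below by a strictly positive constant. This is morally a statement that $\delta_\Gamma$ is continuous (or at least lower semicontinuous) and strictly positive on $\Omega_\Gamma$, combined with compactness; the only subtlety is that $\delta_\Gamma$ was only defined on \emph{rigid} points, so I need to be a bit careful about how to phrase the continuity/compactness argument.

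The cleanest approach is to argue by contradiction using the compactness of $\mathscr L_\Gamma$ (established earlier as a compact subset of $\P_1(F)$, hence of $\P_1(\C_p)$) rather than trying to set up continuity on all of $\Omega_\Gamma$. First I would fix, for each rigid point $x \in \Omega_\Gamma$, a closed disk around $x$ disjoint from $\mathscr L_\Gamma$: indeed, since $\Omega_\Gamma$ is open and the rigid points have a neighborhood basis of strict closed disks, there is a strict closed disk $E(x) \ni x$ with $E(x) \subset \Omega_\Gamma$, and then $\delta(E(x), \mathscr L_\Gamma) > 0$ because $E(x)$ and $\mathscr L_\Gamma$ are disjoint \emph{compact} subsets of $\P_1(\C_p)$ (here I use that $\delta$ is an honest metric on $\P_1(\C_p)$ inducing its topology). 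The interiors of these disks, together with — more to the point — I should instead cover $\mathfrak G$ directly: for every point $y \in \mathfrak G$ (not just rigid ones), since $y \notin \mathscr L_\Gamma$ and $\Omega_\Gamma$ is open, there is a strict open disk $B_y \ni y$ with closure $B_y^+ \subset \Omega_\Gamma$, so that $\epsilon_y := \delta(B_y^+ \cap \P_1(\C_p), \mathscr L_\Gamma) > 0$ (the left-hand set is a closed hence compact subset of $\P_1(\C_p)$ disjoint from the compact set $\mathscr L_\Gamma$). By compactness of $\mathfrak G$, finitely many such $B_{y_1}, \dots, B_{y_m}$ cover $\mathfrak G$; set $c = \min_i \epsilon_{y_i} > 0$. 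Then any rigid point $x \in \mathfrak G$ lies in some $B_{y_i}$, hence (being a $\C_p$-point) in $B_{y_i}^+ \cap \P_1(\C_p)$, so $\delta_\Gamma(x) = \delta(x, \mathscr L_\Gamma) \geq \epsilon_{y_i} \geq c$, as desired.

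The main obstacle, such as it is, is purely bookkeeping: making sure the sets to which I apply the "two disjoint compacts in a metric space are at positive distance" principle really are compact and really live inside $\P_1(\C_p)$ where $\delta$ is defined. The point $B_y^+$ is a closed disk in the Berkovich line, which is compact, but $\delta$ is only a metric on the \emph{classical} points $\P_1(\C_p)$; the rigid points of $\Omega_\Gamma$ are exactly points of $\P_1(\C_p)$, so for the purpose of bounding $\delta_\Gamma(x)$ it suffices to intersect with $\P_1(\C_p)$ throughout, and $B_y^+ \cap \P_1(\C_p)$ is a closed bounded (in the sense of being contained in a ball of the ultrametric) subset of the locally compact space $\P_1(\C_p)$, hence compact. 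One should also recall that $\delta$ induces the usual topology on $\P_1(\C_p)$ so that "disjoint and compact $\Rightarrow$ positive distance" applies; this is immediate from the formula defining $\delta$. With these remarks in place the proof is a few lines, and no appeal to the finer structure of Schottky groups is needed — only that $\mathscr L_\Gamma$ is compact and that $\delta$ is a genuine metric on $\P_1(\C_p)$.
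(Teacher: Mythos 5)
Your covering step contains a genuine gap. You claim that every point $y\in\mathfrak G$ (including non-rigid Berkovich points) admits a strict open disk $B_y\ni y$ with $B_y^+\subset\Omega_\Gamma$. This fails for points of type~2 or~3: any open disk $B(a,r)$ containing the point $\eta_{a,\rho}$ must have $r>\max(\abs{a-a'},\rho)$ and therefore contains the entire set of classical points of the underlying closed ball, which may well meet $\mathscr L_\Gamma$. Concretely, if $\mathscr L_\Gamma=\P_1(\Q_p)$ (the Cherednik--Drinfeld case, which the paper explicitly needs), the Gauss point $\eta_{0,1}$ lies in $\Omega_\Gamma$ but every disk containing it contains $\Z_p\subset\mathscr L_\Gamma$; so no disk neighborhood of $\eta_{0,1}$ is contained in $\Omega_\Gamma$, and compact sets such as affinoid fundamental domains do contain such points. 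You cannot fix this by covering only the rigid points of $\mathfrak G$, since that set is not compact and the finite-subcover step would be lost. A second, more repairable, error: you assert that $\P_1(\C_p)$ is locally compact and that closed balls $E(a,r)\cap\P_1(\C_p)$ are compact. They are not ($\C_p$ has infinite residue field and dense value group, so closed balls are not totally bounded); the positivity of $\delta\bigl(E(a,r)\cap\P_1(\C_p),\mathscr L_\Gamma\bigr)$ is true but must be derived from the ultrametric inequality together with compactness of $\mathscr L_\Gamma$ alone, not from a ``two disjoint compacts'' principle.

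For comparison, the paper avoids both issues by working with sequences and never leaving the Berkovich topology: assuming $\delta_\Gamma(x_n)\to 0$ for rigid points $x_n\in\mathfrak G$, it picks nearest points $\xi_n\in\mathscr L_\Gamma$ (using only that $\mathscr L_\Gamma$ is compact, which holds since $\mathscr L_\Gamma\subset\P_1(F)$ with $F$ locally compact), extracts a convergent subsequence $\xi_n\to\xi\in\mathscr L_\Gamma$, deduces $\delta(x_n,\xi)\to0$ hence $x_n\to\xi$ in $\P_1^\an$, and concludes $\xi\in\mathfrak G\subset\Omega_\Gamma$ from the closedness of the compact set $\mathfrak G$ --- a contradiction. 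The only compactness ever invoked is that of $\mathscr L_\Gamma$ and of $\mathfrak G$ inside $\P_1^\an$, which is where compactness actually lives in this setting.
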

\begin{proof}
Arguing by contradiction, we assume that there exists a sequence $(x_n)$ of rigid
points of~$\mathfrak G$ such that $\delta_\Gamma(x_n)\to 0$.
For every~$n$, let $\xi_n\in\mathscr L_\Gamma$ such that $\delta_\Gamma(x_n)=\delta(x_n,\xi_n)$;
it exists since~$\mathscr L_\Gamma$ is compact. 
Extracting a subsequence if necessary, we assume that the sequence~$(\xi_n)$
converges to a point~$\xi$ of~$\mathscr L_\Gamma$. Then $\delta(x_n,\xi)\to 0$.
This implies that the sequence~$(x_n)$ converges to~$\xi$ in the Berkovich space~$\P_1^\an$.
Since $\mathfrak G$ is compact, one has $\xi\in\mathfrak G$, a contradiction.
\end{proof}

%
 
\subsection{}\label{ss.fund-domain}
Let $\Gamma$ be a Schottky subgroup of $\PGL(2,F)$.
Let us assume that the point at infinity~$\infty$ does not belong
to its limit set~$\mathscr L_\Gamma$.
Then, by~\cite[I, (4.3)]{gerritzen-vanderput1980},
the group~$\Gamma$ admits a basis $(\gamma_1,\dots,\gamma_g)$
and 
a \emph{good fundamental domain} $\mathfrak F_\Gamma$ with respect
to this basis, in the following sense:
\begin{enumerate}
\item There exists a finite family $(B_1,\dots,B_g,C_1,\dots,C_g)$ 
of strict bounded open disks in~$\P_1^\an$ such that 
$\mathfrak F_\Gamma=\P_1^\an\setminus \left(\bigcup B_i \cup \bigcup C_i\right)$;
\item The corresponding bounded 
closed disks $B_1^+,\dots,B_g^+,C_1^+,\dots,C_g^+$
are pairwise disjoint;

Let then $\mathfrak F_\Gamma^\circ=\P_1^\an\setminus \left(\bigcup B_i^+ \cup \bigcup C_i^+\right)$;
\item The elements $\gamma_1,\dots,\gamma_g$  satisfy
$\gamma_i( \P_1^\an\setminus B_i)=C_i^+$ and $\gamma_i(\P_1^\an\setminus B_i^+)=C_i$ for every $i\in\{1,\dots,g\}$.
\end{enumerate}
With this notation, let $W=\P_1^\an\setminus \bigcup B_i $;
this is an affinoid domain of $\P_1^\an$ containing~$\mathfrak F$,
stable under each~$\gamma_i$.
Indeed, one has $W\subset \P_1^\an\setminus B_i$, hence
$\gamma_i W\subset \gamma_i(\P_1^\an\setminus B_i)=C_i^+$,
hence the claim since $C_j^+$ is disjoint from each~$B_i$.

Moreover, the following properties are satisfied:
\begin{enumerate}\setcounter{enumi}{3}
\item One has $\bigcup_{\gamma\in\Gamma} \gamma\cdot \mathfrak F_\Gamma
=\P_1\setminus\mathscr L_\Gamma$;
\item
For $\gamma\in \Gamma$, one has 
$\mathfrak F_\Gamma\cap \gamma\cdot \mathfrak F_\Gamma\neq\emptyset$ if
and only if $\gamma\in\{\id,\gamma_1^{\pm1},\dots,\gamma_g^{\pm1}\}$;
\item For every $\gamma\in\Gamma\setminus\{\id\}$, one has 
$\mathfrak F_\Gamma^\circ\cap \gamma\cdot \mathfrak F_\Gamma=\emptyset$.
\end{enumerate}

In this context, we identify an element~$\gamma$
of~$\Gamma$ with a reduced word
in the letters 
$\{\gamma_1^{\pm},\dots,\gamma_g^{\pm}\}$ and
denote its length by~$\ell_\Gamma(\gamma)$.

For every~$\gamma\in\Gamma\setminus\{\id\}$, 
\cite[I, \S4, p.~29]{gerritzen-vanderput1980}
define a bounded open disk $B(\gamma)$,
equal either to $\gamma\cdot(\P_1^\an\setminus B_i^+)$
or to $\gamma\cdot(\P_1^\an\setminus C_i^+)$, according
to whether the last letter of the reduced word representing~$\gamma$
is~$\gamma_i$ or $\gamma_i^{-1}$; in any case, one has $\gamma\cdot\infty\in B(\gamma)$.
Moreover, they prove:
\begin{enumerate}\setcounter{enumi}{6}
\item $B(\gamma')\subset B(\gamma)$ if and only if 
$\gamma$ is an initial subword of~$\gamma'$;
\item For every integer~$n$, one has
\[ \P_1^\an\setminus \bigcup_{\ell_\Gamma(\gamma)<n} \gamma\cdot\mathfrak F
 = \bigcup_{\ell_\Gamma(\gamma)=n} B(\gamma); \]
\item  There exists a real number~$c>1$
such that for every~$\gamma$, the radius of the disk~$B(\gamma)$ 
is $\ll c^{-\ell_\Gamma(\gamma)}$; 
\item 
The intersection of every decreasing sequence of open disks $(B(\gamma_n))$,
where $\ell_\Gamma(\gamma_n)=n$,
is reduced to a limit point of~$\Gamma$,
and every limit point can be obtained in this way.
\end{enumerate}

\begin{prop}\label{prop.proper}
Let $\Gamma$ be a Schottky group in~$\PGL(2,F)$ and let 
$\mathfrak G$ be a compact analytic domain of~$\Omega_\Gamma$.
There exist positive real numbers~$a,b$ such that
for every~$\gamma\in\Gamma$ and every rigid point~$x\in\gamma\cdot\mathfrak G$,
one has
\[ \ell_\Gamma(\gamma)\leq a - b \log (\delta_\Gamma(x)). \]
\end{prop}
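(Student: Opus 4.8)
The plan is to reduce the statement to an explicit estimate on the radii of the disks $B(\gamma)$ attached to a good fundamental domain. We work in the setting of~\S\ref{ss.fund-domain}; after conjugating $\Gamma$ by a homography --- which leaves the word length $\ell_\Gamma$ unchanged, carries compact analytic domains to compact analytic domains, and distorts $\delta_\Gamma$ only by bounded multiplicative factors (since homographies are Lipschitz for~$\delta$, see~\S\ref{ss.delta}) --- we may assume $\infty\notin\mathscr L_\Gamma$, and we fix a basis $(\gamma_1,\dots,\gamma_g)$, the good fundamental domain $\mathfrak F=\mathfrak F_\Gamma$, and the bounded open disks $B(\gamma)$ for $\gamma\neq\id$.

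The heart of the argument is the following estimate: there exist constants $c>1$ and $K\geq1$ such that, for every rigid point $x\in\Omega_\Gamma$ and every $\gamma'\in\Gamma$ with $x\in\gamma'\cdot\mathfrak F$, one has $\delta_\Gamma(x)\leq K\,c^{-(\ell_\Gamma(\gamma')-1)}$. For $m:=\ell_\Gamma(\gamma')\leq1$ this is trivial since $\delta_\Gamma\leq1$, so I assume $m\geq2$. Since $\Gamma$ is free, its Cayley graph is a tree; together with the overlap property of~\S\ref{ss.fund-domain} (that $\mathfrak F\cap\gamma\cdot\mathfrak F\neq\emptyset$ only for $\gamma\in\{\id,\gamma_1^{\pm1},\dots,\gamma_g^{\pm1}\}$), this forces $x$ to lie in at most two translates $\gamma\cdot\mathfrak F$, whose lengths differ by exactly one, hence in no translate of length $<m-1$. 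Property~(8) of~\S\ref{ss.fund-domain}, applied with $n=m$ when $x$ lies in no translate of length $<m$ and with $n=m-1$ otherwise, then places $x$ in some disk $B(\gamma'')$ with $\ell_\Gamma(\gamma'')\geq m-1\geq1$. Extending the reduced word $\gamma''$ to an infinite reduced word and using properties~(7) and~(10), the resulting decreasing intersection of disks is a single limit point $\xi\in B(\gamma'')$; as $B(\gamma'')$ is a bounded open disk, $\delta_\Gamma(x)\leq\delta(x,\xi)\leq|x-\xi|$ is strictly smaller than the radius of $B(\gamma'')$, which by property~(9) is $\ll c^{-\ell_\Gamma(\gamma'')}\leq c^{-(m-1)}$. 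This proves the estimate.

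To conclude, let $\mathfrak G$ be a compact analytic domain of $\Omega_\Gamma$. By lemma~\ref{lemm.minor} there is $c_0>0$ with $\delta_\Gamma\geq c_0$ on the rigid points of $\mathfrak G$; feeding this into the estimate shows that every rigid point of $\mathfrak G$ lies only in translates $\delta\cdot\mathfrak F$ with $\ell_\Gamma(\delta)\leq N$, for some integer $N=N(c_0,c,K)$, and by property~(4) these finitely many translates cover~$\mathfrak G$. Then, for $\gamma\in\Gamma$ and a rigid point $x\in\gamma\cdot\mathfrak G$, we get $x\in(\gamma\delta)\cdot\mathfrak F$ for some $\delta$ with $\ell_\Gamma(\delta)\leq N$, so $\ell_\Gamma(\gamma\delta)\geq\ell_\Gamma(\gamma)-N$, and the estimate applied to $\gamma\delta$ gives $\delta_\Gamma(x)\leq K\,c^{-(\ell_\Gamma(\gamma)-N-1)}$. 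Taking logarithms yields $\ell_\Gamma(\gamma)\leq a-b\log\delta_\Gamma(x)$ with $b=1/\log c$ and $a=N+1+(\log K)/\log c$; this inequality also holds when $\ell_\Gamma(\gamma)\leq N+1$, since $\log\delta_\Gamma(x)\leq0$.

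The main obstacle is the combinatorial core of the estimate --- trapping $x$ inside a disk $B(\gamma'')$ whose length differs from $\ell_\Gamma(\gamma')$ by at most one. This is precisely where the fine structure of a good fundamental domain (the overlap property~(5) and the covering identity~(8) of~\S\ref{ss.fund-domain}) and the tree structure of the free group $\Gamma$ enter; once $x$ is caught in such a disk, properties~(9) and~(10) convert this into the metric bound essentially mechanically, and lemma~\ref{lemm.minor} takes care of the passage from translates of $\mathfrak F$ to the compact domain $\mathfrak G$.
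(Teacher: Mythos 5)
Your proof follows essentially the same route as the paper's: reduce to the case $\infty\notin\mathscr L_\Gamma$, bound $\delta_\Gamma(x)$ by the radius of a disk $B(\gamma'')$ containing~$x$ using property~(9), and then pass from $\mathfrak F$ to a general compact domain~$\mathfrak G$ via lemma~\ref{lemm.minor} and the subadditivity of~$\ell_\Gamma$. In fact your treatment of the central estimate is \emph{more} careful than the paper's: you account for the fact that $x\in\gamma'\cdot\mathfrak F$ only guarantees $x\in B(\gamma'')$ for some $\gamma''$ with $\ell_\Gamma(\gamma'')\geq\ell_\Gamma(\gamma')-1$ (because of the boundary overlaps in property~(5)), and you exhibit a limit point inside $B(\gamma'')$ via properties~(7) and~(10) rather than asserting that the \emph{nearest} limit point lies in that disk.

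There is, however, one genuine gap: the opening reduction. You claim that after conjugating $\Gamma$ by a homography one may assume $\infty\notin\mathscr L_\Gamma$. Since $\mathscr L_\Gamma\subset\P_1(F)$ and any $g\in\PGL(2,F)$ preserves $\P_1(F)$, this requires a point of $\P_1(F)\setminus\mathscr L_\Gamma$ to exist, and it need not: the case $\mathscr L_\Gamma=\P_1(F)$ occurs in the paper's main application (for the Cherednik--Drinfeld uniformization the limit set is all of $\P_1(\Q_p)$). The paper's proof avoids this by first extending scalars to a finite extension $F'$ of~$F$, so that $\P_1(F')\setminus\mathscr L_\Gamma\neq\emptyset$, and only then placing a point of the complement at infinity. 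Your argument is repaired the same way, at the cost of a short verification that the statement over~$F'$ implies the statement over~$F$ (the word length is unchanged, $\mathfrak G_{F'}$ is still a compact analytic domain, rigid points of $\gamma\cdot\mathfrak G$ lift to rigid points of $\gamma\cdot\mathfrak G_{F'}$, and $\delta_\Gamma$ is computed against the same limit set). The rest of your argument, including the deduction of the general case from the case $\mathfrak G=\mathfrak F$, is correct.
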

\begin{proof}
To prove this proposition, we may extend the scalars to a finite
extension of~$F$ and henceforth assume that the limit
set~$\mathscr L_\Gamma$ is not equal to~$\P_1(F)$.
Placing a point of $\P_1(F)\setminus\mathscr L_\Gamma$ at infinity,
\S\ref{ss.fund-domain} furnishes
a basis $(\gamma_1,\dots,\gamma_g)$ and
a good fundamental domain~$\mathfrak F$ with respect to this basis
of the form $\mathfrak F=\P_1^\an\setminus\left(\bigcup_{i=1}^g B_i\cup \bigcup_{i=1}^g C_i\right)$.  Let $b$ and $c>1$ be positive real numbers such that
the diameter of~$B(\gamma)$ is bounded by $b c^{-\ell_\Gamma(\gamma)}$,
for every $\gamma\in\Gamma\setminus\{\id\}$.

Let $x\in\Omega_\Gamma$ and let $\gamma\in\Gamma$ be such that $x\in\gamma\cdot\mathfrak F$.
Let $\xi\in\mathscr L_\Gamma(x)$ such that $\delta_\Gamma(x)=\delta(x,\xi)$.
Since the disk~$B(\gamma)$ contains both~$x$ and~$\xi$,
one has $\delta_\Gamma(x) \leq b c^{-\ell_\Gamma(\gamma)}$,
that is, 
\[ \ell_\Gamma(\gamma)
\leq \frac1{\log(c)} \left( - \log(\delta_\Gamma(x)) + \log(b) \right),
\]
since $\log(c)>0$. This proves the proposition in the particular
case where $\mathfrak G=\mathfrak F$. 

Let us now prove the general case. Let $a$ be a real number such that $\delta_\gamma(x)\geq a>0$ for every rigid point of~$\mathfrak G$ (lemma~\ref{lemm.minor}).
The preceding inequality shows that there exists a finite subset~$S$
of~$\Gamma$ such that $\mathfrak G$ meets~$\gamma\cdot\mathfrak F$
if and only if $\gamma\in S$.
It then follows from property~(8) that $\mathfrak G$ is contained 
in the finite union $\bigcup_{s\in S} s\cdot\mathfrak F$.
To conclude the proof, we observe that if $x\in\gamma\cdot\mathfrak G$,
then there exists~$s\in S$ such that $x\in \gamma s\cdot\mathfrak F$.
The proposition then follows from the 
particular case already treated and from the
inequality $\ell_\Gamma(\gamma ) \leq \ell_\Gamma(\gamma s)+\ell_\Gamma(s)$.
\end{proof}
\begin{coro}\label{coro.proper}
Let $\mathfrak G$ and~$\mathfrak G'$ be compact analytic domains of~$\Omega_\Gamma$.
The set of $\gamma\in\Gamma$ such that $\gamma\cdot\mathfrak G\cap\mathfrak G' \neq\emptyset$ is finite.
\end{coro}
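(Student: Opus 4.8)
The plan is to deduce Corollary~\ref{coro.proper} from Proposition~\ref{prop.proper} together with Lemma~\ref{lemm.minor}. First I would observe that $\mathfrak G\cup\mathfrak G'$ is again a compact analytic domain of~$\Omega_\Gamma$, so by Lemma~\ref{lemm.minor} there is a strictly positive real number~$m$ such that $\delta_\Gamma(x)\geq m$ for every rigid point~$x$ of~$\mathfrak G\cup\mathfrak G'$, in particular for every rigid point of~$\mathfrak G$ and of~$\mathfrak G'$.

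Next, suppose $\gamma\in\Gamma$ is such that $\gamma\cdot\mathfrak G\cap\mathfrak G'\neq\emptyset$. Since $\mathfrak G$ is a nonempty compact analytic domain, $\gamma\cdot\mathfrak G\cap\mathfrak G'$ is a nonempty analytic domain, hence contains a rigid point~$x$ (rigid points are dense in any nonempty analytic domain of a curve). On the one hand $x\in\gamma\cdot\mathfrak G$, so Proposition~\ref{prop.proper} applied to~$\mathfrak G$ gives constants~$a,b>0$ with
\[ \ell_\Gamma(\gamma)\leq a - b\log(\delta_\Gamma(x)). \]
On the other hand $x\in\mathfrak G'$ is a rigid point, so $\delta_\Gamma(x)\geq m$, whence $-\log(\delta_\Gamma(x))\leq -\log(m)$ and therefore
\[ \ell_\Gamma(\gamma)\leq a - b\log(m). \]

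Since the right-hand side is a constant independent of~$\gamma$, only finitely many words~$\gamma$ in the free group~$\Gamma$ on~$g$ generators can satisfy this length bound; hence the set of such~$\gamma$ is finite, which is the assertion. The only point requiring a little care is the existence of a rigid point in the intersection: this is where I would either invoke the fact that nonempty analytic domains of a curve contain rigid points (density of rigid points), or, alternatively, pass to a complete algebraically closed extension and use that Proposition~\ref{prop.proper} and Lemma~\ref{lemm.minor} are stable under such base change (the statement of the corollary being insensitive to base change since $\Gamma$ and the domains are defined over~$F$). I expect no genuine obstacle here; the corollary is essentially a repackaging of the properness statement of Proposition~\ref{prop.proper}.
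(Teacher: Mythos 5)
Your argument is correct and is essentially identical to the paper's proof: pick a rigid point in the non-empty intersection $\gamma\cdot\mathfrak G\cap\mathfrak G'$ (which is an affinoid domain, hence contains one), bound $\ell_\Gamma(\gamma)$ via proposition~\ref{prop.proper} applied to~$\mathfrak G$, and bound $\delta_\Gamma$ from below on~$\mathfrak G'$ via lemma~\ref{lemm.minor}. The only cosmetic difference is that you apply lemma~\ref{lemm.minor} to $\mathfrak G\cup\mathfrak G'$ where the paper only needs it for~$\mathfrak G'$.
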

\begin{proof}
Let $S$ be this set. For $\gamma\in S$, the intersection
$\gamma\cdot\mathfrak G\cap\mathfrak G'$ is a non-empty
affinoid domain of~$\P_1^\an$, hence it contains a rigid point~$x_\gamma$.
With $a$ and~$b$ as in the statement of proposition~\ref{prop.proper},  one has
$\ell_\Gamma(\gamma)\leq a - b \log(\delta_\Gamma(x_\gamma))$.
Since $x_\gamma\in\mathfrak G'$, $\delta_\Gamma(x_\gamma)$ is bounded
from below, by lemma~\ref{lemm.minor}. This shows that $\ell_\Gamma(\gamma)$ is bounded
above, when $\gamma$ runs over~$S$.
\end{proof}

\begin{prop}\label{prop.fundamental}
Let $\Gamma$ be a Schottky group in~$\PGL(2,F)$ and let~$g$ be its rank.
Let $\xi\in\mathscr L_\Gamma$ and let $U$ be an open neighborhood of~$\xi$
in~$\P_1^\an$.

There exist an open neighborhood~$U'$ of~$\xi$, contained in~$U$,
a basis $\gamma_1,\dots,\gamma_g$ of~$\Gamma$,
an affinoid domain~$\mathfrak F\subset\Omega_\Gamma$
such that the following properties hold:
\begin{enumerate}
\item One has $\mathfrak F\subset U'$;
\item For every $i$, one has $\gamma_i(U')\subset U'$;
\item One has $\bigcup_{\gamma\in\Gamma} \gamma\mathfrak F=\Omega_\Gamma$.
\end{enumerate}
\end{prop}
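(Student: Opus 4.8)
The plan is to start from a good fundamental domain attached to \emph{some} basis of~$\Gamma$ and then conjugate everything so as to move its complement close to~$\xi$. More precisely, after extending scalars to a finite extension of~$F$ (which does not change anything in the statement, since the properties asked for are geometric), we may assume $\mathscr L_\Gamma\neq\P_1(F)$, pick a point $\infty\in\P_1(F)\setminus\mathscr L_\Gamma$ and invoke~\S\ref{ss.fund-domain} to obtain a basis $(\delta_1,\dots,\delta_g)$ of~$\Gamma$ together with a good fundamental domain $\mathfrak F_0=\P_1^\an\setminus(\bigcup B_i\cup\bigcup C_i)$, where the $B_i,C_i$ are strict bounded open disks with pairwise disjoint associated closed disks. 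By property~(10) of~\S\ref{ss.fund-domain}, the point~$\xi$ lies in a decreasing sequence of disks $B(\gamma_n)$ with $\ell_\Gamma(\gamma_n)=n$, and property~(9) shows their radii shrink to~$0$; hence for~$n$ large enough the disk $B(\gamma_n)$ is contained in the given neighborhood~$U$. Fix such an $n$ and set $\gamma=\gamma_n$, a reduced word of length~$n$.

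The key observation is that $\gamma\cdot\mathfrak F_0$ has very small $\delta$-diameter, because $\gamma\cdot\mathfrak F_0$ is the intersection with~$\mathfrak F_0$'s image, and by properties~(7)--(8) of~\S\ref{ss.fund-domain} every $\gamma'\cdot\mathfrak F_0$ with $\gamma'$ having $\gamma$ as an initial subword is contained in $B(\gamma)\subset U$. So I would take $\mathfrak F=\gamma\cdot\mathfrak F_0$; since $\mathfrak F_0$ is an affinoid domain of~$\P_1^\an$ contained in~$\Omega_\Gamma$ and $\Gamma$ acts by homographies preserving~$\Omega_\Gamma$, $\mathfrak F$ is again an affinoid domain contained in~$\Omega_\Gamma$, and $\bigcup_{\gamma'\in\Gamma}\gamma'\mathfrak F=\bigcup_{\gamma'\in\Gamma}\gamma'\gamma\cdot\mathfrak F_0=\bigcup_{\gamma'\in\Gamma}\gamma'\cdot\mathfrak F_0=\Omega_\Gamma$ by property~(4), so~(3) holds and~(1) holds once we take $U'\subset U$ with $\mathfrak F\subset B(\gamma)\subset U'$. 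For the new basis I would take $\gamma_i'=\gamma\delta_i\gamma^{-1}$: this is again a free generating set of~$\Gamma$ (conjugation is an automorphism), and $\mathfrak F$ is a good fundamental domain with respect to it, with defining disks $\gamma B_i,\gamma C_i$.

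The remaining point, which I expect to be the main obstacle, is the existence of the neighborhood~$U'$ of~$\xi$ with $\gamma_i'(U')\subset U'$ for every~$i$, i.e.\ a neighborhood of~$\xi$ \emph{stable} under all the new generators. Here I would argue as follows: $\xi=\gamma\cdot\xi_0$ where $\xi_0$ is the limit point $\bigcap_m B(\gamma_{n+m}\,\cdots)$ obtained by continuing the word; but more usefully, by property~(10) every limit point lies in infinitely many of the nested disks $B(\gamma')$, and the generator~$\delta_i$ maps $\P_1^\an\setminus B_i^+$ into~$C_i$, $\P_1^\an\setminus C_i^+$ into~$B_i$, etc. I would choose $n$ large enough that $B(\gamma)$ is contained in one of the disks $C_j$ or $B_j$ — possible since $\xi\notin\{$finitely many disks$\}$ is false, rather $\xi$ is eventually swallowed, so instead I choose $U'$ to be a disk $B(\gamma')$ containing~$\xi$ with $\ell_\Gamma(\gamma')$ large; then for each~$i$, $\gamma_i'(U')=\gamma_i' B(\gamma')$ is again a disk of the form $B(\gamma'')$, and an inspection of the reduced-word combinatorics of~\S\ref{ss.fund-domain}, together with the fact that left-multiplication by $\gamma_i'=\gamma\delta_i\gamma^{-1}$ either lengthens or keeps controlled the prefix, shows $B(\gamma'')\subset B(\gamma')=U'$ provided $U'$ avoids a controlled finite list of the "boundary" disks. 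Making this combinatorial bookkeeping precise — choosing~$n$ so that after conjugation all generators push the chosen disk inward — is the delicate part; everything else is formal transport of structure along the homography~$\gamma$.
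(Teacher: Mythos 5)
Your reduction to the case $\mathscr L_\Gamma\neq\P_1(F)$, the choice of $\gamma$ with $\xi\in B(\gamma)$ and $B(\gamma)^+\subset U$, the affinoid $\gamma\cdot\mathfrak F_0$ and the conjugated basis $\gamma_i'=\gamma\delta_i\gamma^{-1}$ all match the paper's argument and do settle properties (1) and (3). The genuine gap is property (2), precisely the point you flag as the main obstacle: your candidate $U'=B(\gamma')$ is not stable under the new generators, and the combinatorial bookkeeping you defer to cannot be made to work. Concretely, write $\gamma'=\gamma w$; when $\delta_i w$ is reduced one has $\gamma_i'\cdot B(\gamma w)=B(\gamma\delta_i w)$, and by property~(7) of \S\ref{ss.fund-domain} this disk is contained in $B(\gamma w)$ only if $\gamma w$ is an initial subword of $\gamma\delta_i w$, which fails unless $w$ begins with $\delta_i$; for instance $B(\gamma\delta_1\delta_2)$ is contained in $B(\gamma\delta_1)$ and hence \emph{disjoint} from $B(\gamma\delta_2)$. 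Already for $\gamma'=\gamma$ (last letter $\delta_s$, so $B(\gamma)=\gamma\cdot(\P_1^\an\setminus B_s^+)$) one computes $\gamma_i'\cdot B(\gamma)=\gamma\delta_i\cdot(\P_1^\an\setminus B_s^+)$, which for $i\neq s$ contains $\gamma\cdot\delta_i(B_i)=\gamma\cdot(\P_1^\an\setminus C_i^+)$ and therefore leaves $B(\gamma)$. A single disk $B(\gamma')$ is contracted only by the one generator matching its last letter.

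The paper's device is the affinoid $W=\P_1^\an\setminus\bigcup_i B_i$, i.e.\ one removes only the disks $B_i$ and keeps the $C_i$: since $\delta_i(\P_1^\an\setminus B_i)=C_i^+$ and the $C_i^+$ are disjoint from every $B_j$, one gets $\delta_i\cdot W\subset C_i^+\subset W$ for \emph{every} $i$ simultaneously, hence $\gamma\cdot W$ is stable under every $\gamma_i'$. After replacing some generators by their inverses (and swapping the corresponding $B_i$ and $C_i$) so that the relevant letters of the word of $\xi$ are positive, one has $W\subset\P_1^\an\setminus B_s$, whence $\gamma\cdot W\subset B(\gamma)^+\subset U$, and $\xi$ lies in the interior of $\gamma\cdot W$. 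One then takes $U'=\operatorname{int}(\gamma\cdot W)$ and reruns your first step \emph{inside} $U'$ to find a translate of $\gamma\cdot\mathfrak F_0$ contained in $U'$. Without this stable affinoid (or an equivalent substitute), your proof does not establish~(2).
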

Such an affinoid domain will called a \emph{fundamental set.}
\begin{proof}
We first treat the case where $\mathscr L_\Gamma\neq\P_1(F)$.
Placing a point of $\P_1(F)\setminus\mathscr L_\Gamma$ at infinity,
\S\ref{ss.fund-domain} furnishes
a basis $(\gamma_1,\dots,\gamma_g)$ and
a good fundamental domain~$\mathfrak F$ with respect to this basis
of the form $\mathfrak F=\P_1^\an\setminus\left(\bigcup_{i=1}^g B_i\cup \bigcup_{i=1}^g C_i\right)$. 

By~(10), for every integer~$n\geq 1$, there is an element $\gamma \in\Gamma$ 
of length~$n$
such that $\xi\in B(\gamma)$; if $n$ is large enough,
one has $B(\gamma)^+\subset U$, because the diameter of~$B(\gamma)^+$
tends to~$0$ when $n=\ell_\Gamma(\gamma)$ tends to~$\infty$.
Since $\gamma\cdot\mathfrak F\subset B(\gamma)^+$, 
this implies that $\gamma\cdot\mathfrak F\subset U$.

Up to changing the basis $(\gamma_1,\dots,\gamma_g)$ into
$(\gamma_1^{-1},\dots,\gamma_g^{-1})$, and exchanging~$B_i$ and~$C_i$
for every~$i$, we may assume that
the last letter of~$\gamma$ is~$\gamma_s$, for some $s\in\{1,\dots,g\}$.
Set $W=\P_1^\an\setminus\bigcup_{i=1}^g B_i$;
recall that $W$ is an affinoid domain of~$\P_1^\an$ containing~$\mathfrak F$
and stable under~$\gamma_1,\dots,\gamma_g$.
By definition, one has
\[ B(\gamma)^+=\gamma \cdot (\P_1^\an\setminus B_s)\supset \gamma \cdot W, \]
since $W\subset\P_1^\an\setminus B_s$.

 
Let us now set $\mathfrak F'=\gamma\cdot\mathfrak F$, $W'=\gamma\cdot W$
and $\gamma'_i=\gamma\gamma_i\gamma^{-1}$ for $i\in\{1,\dots,g\}$.
By construction, $\mathfrak F'$ and~$W'$ are affinoid domains of~$\P_1^\an$
such that
$\mathfrak F'\subset W' \subset B(\gamma)^+\subset U$, 
the translates of~$\mathfrak F'$
under~$\Gamma$ cover~$\Omega_\Gamma$, and $W'$ is stable 
under the basis $(\gamma'_1,\dots,\gamma'_g)$ of~$\Gamma$.

This almost proves (1--3), except that~$W'$ is affinoid and not open.
To conclude the construction, one sets~$U'$ to be the interior of~$W'$,
and one redoes the construction starting from~$U'$ instead of~$U$.
The second paragraph of the proof shows that there exists~$\gamma'\in\Gamma$
such that $\gamma'\cdot\mathfrak F'$ is contained in~$U'$.
The affinoid affinoid~$\gamma'\cdot\mathfrak F'$, the open subset~$U'$
and the basis $(\gamma'_1,\dots,\gamma'_g)$ satisfy the 
requirements of the proposition.

Let us now treat the case where $\mathscr L_\Gamma=\P_1(F)$.
Let $F'$ be a finite extension of~$F$ of degree~$>1$.
The preceding construction can be applied starting with a point
of $\P_1(F')\setminus\mathscr L_\Gamma$ and furnishes
an open neighborhood~$V'$ of~$\xi$ in~$(\P_1^\an)_{F'}$, contained
in~$U_{F'}$,
a basis $(\gamma_1,\dots,\gamma_g)$ of~$\Gamma$
and an affinoid domain~$\mathfrak F'$ of~$\Omega_{\Gamma,F'}$ 
satisfying properties~\mbox{(1--3)}.
The images~$U'$ of~$V'$ 
and $\mathfrak F$ of~$\mathfrak F'$
by the projection $(\P_1^\an)_{F'}\to \P_1^\an$ satisfy the
required properties.
\end{proof}

\begin{lemm}\label{lemm.heights-length}
Let $\Gamma$ be an arithmetic Schottky group in~$\PGL(2,F)$
and let $H$ be a height function on $\PGL(2,\overline\Q)$.
There exists a positive real number~$c$
such that $H(\gamma)\leq c^{\ell_\Gamma(\gamma)+1}$, 
for every $\gamma\in\Gamma$.
\end{lemm}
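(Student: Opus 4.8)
The plan is to exploit that $\Gamma$ is finitely generated together with the submultiplicativity of the height function recalled in~\S\ref{ss.heights}. Since $\Gamma$ is arithmetic, there is a number field $K\subset F$ with $\Gamma\subset\PGL(2,K)$, and fixing a basis $(\gamma_1,\dots,\gamma_g)$ of the free group $\Gamma$ (as in~\S\ref{ss.fund-domain}), every element $\gamma\in\Gamma$ is written as a reduced word in the letters $\{\gamma_1^{\pm1},\dots,\gamma_g^{\pm1}\}$ of length $\ell=\ell_\Gamma(\gamma)$. First I would set $c_0=\max_i\bigl(H(\gamma_i),H(\gamma_i^{-1})\bigr)$, a finite quantity since there are only $2g$ generators and their inverses. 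By~\S\ref{ss.heights} there is a constant $c_1>0$ such that $H(gg')\le c_1 H(g)H(g')$ for all $g,g'\in\PGL(2,\overline\Q)$.

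The main step is then a straightforward induction on $\ell$. Writing $\gamma=\gamma'\cdot s$ with $s$ a single letter and $\ell_\Gamma(\gamma')=\ell-1$, submultiplicativity gives
\[
H(\gamma)\le c_1\,H(\gamma')\,H(s)\le c_1 c_0\,H(\gamma').
\]
Iterating $\ell$ times yields $H(\gamma)\le (c_1 c_0)^{\ell}\,H(\id)$; since $H(\id)$ is a fixed constant, absorbing it into the base we obtain $H(\gamma)\le c^{\ell+1}$ with $c=\max(c_1 c_0, H(\id))$ (or any constant dominating both), which is the claimed bound. One should note that the particular choice of height function on $\PGL(2,\overline\Q)$ is irrelevant: any two such heights are equivalent in the sense of~\S\ref{ss.heights}, $H(x)^{1/a}\ll H'(x)\ll H(x)^a$, so a bound of the form $c^{\ell+1}$ for one height translates into a bound of the same shape (with a different constant) for any other.

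I do not expect any genuine obstacle here: the statement is essentially the observation that word length controls height exponentially, which is immediate from submultiplicativity. The only mild point to be careful about is that the constant $c$ must be chosen large enough to also cover the small values of $\ell$ (in particular $\ell=0$, i.e.\ $\gamma=\id$, and $\ell=1$), which is why the exponent is $\ell_\Gamma(\gamma)+1$ rather than $\ell_\Gamma(\gamma)$; this is handled simply by enlarging $c$ at the end. No use of the disk combinatorics of~\S\ref{ss.fund-domain} beyond the mere existence of a free basis is needed, and arithmeticity enters only to guarantee that the generators have well-defined finite heights.
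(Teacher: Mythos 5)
Your proof is correct and follows essentially the same route as the paper: fix a free basis, bound the heights of the finitely many generators and their inverses, and induct on word length using the submultiplicativity $H(gg')\leq c_1H(g)H(g')$, absorbing $H(\id)$ via the exponent $\ell_\Gamma(\gamma)+1$. Your remark that arithmeticity is needed only so that the generators lie in $\PGL(2,\overline\Q)$ and have finite heights is exactly the right observation.
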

\begin{proof}
Let $(\gamma_1,\dots,\gamma_g)$ be a basis of~$\Gamma$ as above.
Let $c_1$ be a positive real number such that $H( hh')\leq c_1 H(h)H(h')$
for every $h,h'\in \PGL(2,\overline\Q)$.
Let $c=c_1 \sup(H(\id),H(\gamma_1),\dots,H(\gamma_g))$.
One proves by induction on~$\ell_\Gamma(\gamma)$
that 
\[ c_1 H(\gamma) \leq \sup(c_1H(\gamma_1^{\pm}),\dots,c_1H(\gamma_g^{\pm}))^{\ell_\Gamma(\gamma)}  c_1 H(\id) \leq c_1  c^{\ell_\Gamma(\gamma)+1} \]
for every  $\gamma\in\Gamma$,
as was to be shown.
\end{proof}

\begin{lemm}\label{lemm.few-rational}
Let $\Gamma$ be a Schottky subgroup of $\PGL(2,F)$
and let $\Delta$ be a subset of $\P_1(\bar F)$
of cardinality~$2$.
Let $K$ be a number field contained in~$F$.
The stabilizer of~$\Delta$ inside~$\Gamma$
does not have many $K$-rational points.
\end{lemm}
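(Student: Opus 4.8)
Lemma \ref{lemm.few-rational} asserts that the stabilizer $\Gamma_\Delta$ in $\Gamma$ of a two-element subset $\Delta\subset\P_1(\bar F)$ does not have many $K$-rational points.

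The plan is to describe $\Gamma_\Delta$ group-theoretically and then show that any subgroup of it has at most polynomially-of-bounded-exponent many elements in a given height range, in fact only a bounded number up to a fixed factor. First I would pass to the index-two subgroup $\Gamma_\Delta^0$ fixing each of the two points $\xi_1,\xi_2$ pointwise; it suffices to treat this subgroup, since a set has many $K$-rational points iff a finite-index subset does (up to adjusting $c$ and $\alpha$). Now an element $\gamma\in\PGL(2,F)$ fixing two distinct points of $\P_1(\bar F)$ is, after conjugating those two points to $0$ and $\infty$, a diagonal matrix, hence semisimple; so $\Gamma_\Delta^0$ is a subgroup of a torus. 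Since $\Gamma$ is a Schottky group, it is free and every nontrivial element is hyperbolic with exactly two fixed points (its attracting and repelling points in $\mathscr L_\Gamma$); two hyperbolic elements sharing a fixed point in fact share both fixed points and generate a free abelian — hence cyclic, by freeness — subgroup. Therefore $\Gamma_\Delta^0$ is either trivial or infinite cyclic, generated by some $\gamma_0$ which is semisimple.

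With $\Gamma_\Delta^0=\langle\gamma_0\rangle$ cyclic and $\gamma_0$ semisimple, the key input is \S\ref{ss.heights}: for a semisimple element one has two-sided bounds $H(\gamma_0)^n\ll H(\gamma_0^n)\ll H(\gamma_0^n)$ — more precisely $H(\gamma_0)^{|n|}\ll H(\gamma_0^n)$ — so that the number of powers $\gamma_0^n$ with $H(\gamma_0^n)\le T$ grows only like $\log T$. Hence
\[
  \Card\bigl(\Gamma_\Delta^0(K;T)\bigr)\le \Card\bigl(\{n\in\Z\sozat H(\gamma_0^n)\le T\}\bigr)\ll \log T,
\]
which is certainly not $\ge cT^\alpha$ for any $\alpha>0$. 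Consequently $\Gamma_\Delta^0$, and therefore $\Gamma_\Delta$, does not have many $K$-rational points. (If $\Gamma_\Delta^0$ is trivial then $\Gamma_\Delta$ is finite and the claim is immediate.)

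The main obstacle, and the only nontrivial point, is the structural claim that $\Gamma_\Delta^0$ is cyclic: this is where freeness of the Schottky group and the classification of its elements as hyperbolic are used, via the fact that the centralizer of a hyperbolic element $\gamma_0$ in $\Gamma$ meets the one-parameter torus through $\gamma_0$ in a discrete, hence (being free) cyclic, group. Everything else is a routine application of the height estimates recalled in \S\ref{ss.heights} together with the elementary stability of the ``many rational points'' property under passing to finite-index subsets.
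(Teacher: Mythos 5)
Your proof is correct and follows essentially the same route as the paper: reduce to the pointwise stabilizer of the two points, show it is cyclic generated by a hyperbolic (hence semisimple) element, and invoke the height bound $H(\gamma_0)^{|n|}\ll H(\gamma_0^n)$ from \S\ref{ss.heights} to get only $O(\log T)$ elements of height $\leq T$. The one cosmetic difference is how cyclicity is obtained — you use that an abelian subgroup of a free group is cyclic, whereas the paper picks the element with minimal multiplier $\abs{\lambda(h)}>1$ and argues by Euclidean division on valuations — and both arguments are valid.
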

\begin{proof}
Let $S$ be this stabilizer;
we may assume that $S\neq\{\id\}$.
Let $g\in S\setminus\{\id\}$. Then $g$ is hyperbolic
(see~\cite{gerritzen-vanderput1980}, page~7, line~2),
hence has exactly two rational fixed points in~$\P_1(F)$.
Up to a change of projective coordinates, we may thus
assume that $\Delta=\{0,\infty\}$.
Then every element~$h$ of~$S$ is of the form $z\mapsto \lambda(h) z$,
for some unique element $\lambda(h)\in K^\times$;
moreover, unless $h=\id$, any such~$h$ is hyperbolic
hence is represented by a matrix having two eigenvalues
with distinct absolute values, so that
$\abs{\lambda(h)}\neq 1$.
Let us choose $h\in S\setminus\{\id\}$
such that $\abs{\lambda(h)}$ is~$>1$ and minimal.
By euclidean division, one has $S=\langle h\rangle$.

Then $S\cap\PGL(2,K)$ is generated
by an element of the form~$h^a$, for some~$a\in\Z$.
Since $h^a$ is semisimple, we have 
$H(h^a)^n\ll H(h^{an})\ll H(h^a)^n$, for every  $n\in\Z$
(see~\S\ref{ss.heights}).
This shows that $S\cap \PGL(2,K)$ does not have many rational points.
\end{proof}

In \S\ref{sec.proof}, we will need the following lemma.
\begin{lemm}\label{lemm.distance}
Let $r$ be a positive real number 
and let $f\in\C_p\lbra z\rbra$ be a power series which
converges on the closed disk $E(0,r)$.
Let $L_1$ and~$L_2$ be closed subsets of~$\C_p$ 
such that $f^{-1}(L_2)\subset L_1$; for every $x\in\C_p$,
let $\delta(x;L_1)$ and~$\delta(x;{L_2})$ 
be the distances of~$x$ to~$L_1$ and~$L_2$ respectively.
Then there exists real numbers~$m\geq 0$, $c>0$, and~$s$ such that $0<s<r$
and such that $\delta(f(x);L_2)\geq c \delta(x;L_1)^m$
for every $x\in E(0,s)$.
\end{lemm}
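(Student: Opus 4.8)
The statement is a local, one-variable $p$-adic estimate: a power series $f$ converging on a closed disk distorts distances-to-a-closed-set only polynomially, provided $f^{-1}(L_2)\subset L_1$. The plan is to reduce to a statement about the zero locus and valuation of $f$, using a Weierstrass-type factorization of $f$ on a slightly smaller disk. First I would use that $f$, convergent on $E(0,r)$, has only finitely many zeros in any closed disk $E(0,s)$ with $s<r$ (the Newton polygon / Weierstrass preparation over $\C_p$); write $f = P\cdot u$ on $E(0,s)$, where $P(z)=\prod_{k=1}^N(z-a_k)$ is the monic polynomial vanishing exactly at the zeros $a_1,\dots,a_N$ of $f$ in $E(0,s)$ (with multiplicity) and $u$ is a unit on $E(0,s)$, so that $|u(x)|$ is bounded above and below by strictly positive constants for $x\in E(0,s)$. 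Thus $|f(x)| \asymp \prod_k |x-a_k|$ on $E(0,s)$.

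**Key steps.** The crux is then to compare $\delta(f(x);L_2)$ with $\delta(x;L_1)$. Fix $x\in E(0,s')$ for a still smaller $s'$, and let $\rho=\delta(x;L_1)$. I distinguish two regimes. If $\rho$ is bounded below (say $\rho\geq \rho_0$ for a threshold $\rho_0$ to be chosen), then $f(E(0,s'))$ is a bounded set, so $\delta(f(x);L_2)$ is bounded — either trivially below by a positive constant if $f(x)$ stays away from $L_2$, or, more to the point, one simply absorbs this case by shrinking $c$, since there are only finitely many scales to control and $\rho$ ranges over a compact interval $[\rho_0,\mathrm{diam}]$ on which $\delta(f(x);L_2)$ is... not obviously continuous, so instead I argue: the set of $x\in E(0,s')$ with $\delta(x;L_1)\geq \rho_0$ and $\delta(f(x);L_2)$ small is contained in $f^{-1}$ of a small neighborhood of $L_2$, hence by hypothesis in a neighborhood of $L_1$; making this precise is exactly the hypothesis $f^{-1}(L_2)\subset L_1$ used quantitatively. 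The clean way: for $y$ near $L_2$, $f^{-1}(\{y' : \delta(y';L_2)<t\})$ shrinks toward $L_1$ as $t\to 0$, and on the compact disk the modulus of continuity of this shrinking is controlled by the zero structure of $f-y$. The second regime, $\rho$ small, is where $x$ is close to $L_1$; here I estimate $\delta(f(x);L_2)$ from below by noting that if $\delta(f(x);L_2)<t$ then $f(x)$ is within $t$ of $L_2$, so $x$ lies in $f^{-1}(E(L_2,t))$, which for small $t$ is contained in a shrinking neighborhood of $f^{-1}(L_2)\subset L_1$. The rate at which $f^{-1}(E(L_2,t))$ shrinks around $L_1$ is polynomial in $t$: near a point $x_0\in f^{-1}(L_2)$, writing $f(x)-f(x_0)$ as (unit)$\cdot(x-x_0)^e$ where $e$ is the local multiplicity, $|f(x)-f(x_0)| \asymp |x-x_0|^e$, so $\delta(x;L_1)^e \ll \delta(x;f^{-1}(L_2))^e \ll |f(x)-f(x_0)| \ll \delta(f(x);L_2) + (\text{stuff})$; taking $m$ to be the maximum local multiplicity of $f$ over $E(0,s')$ and absorbing constants gives $\delta(f(x);L_2)\geq c\,\delta(x;L_1)^m$.

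**Main obstacle.** The genuinely delicate point is the uniformity over $L_1,L_2$ and over the whole disk: the local multiplicity $e$ and the local unit bounds depend on the base point $x_0\in f^{-1}(L_2)$, and $f^{-1}(L_2)$ need not be finite or even discrete (e.g. $L_2$ could be large). So I would not argue pointwise around each $x_0$ but rather globally: shrink to $E(0,s)$ where $f = P\cdot u$ with $|u|\asymp 1$, so that $\delta(f(x);L_2)$ is comparable to $\delta\big(P(x);\,\tilde L_2\big)$ for a suitable rescaling $\tilde L_2$, reducing everything to $f=P$ a \emph{polynomial}. For a polynomial $P$ of degree $N$, the estimate $\delta(P(x);L_2)\geq c\,\delta(x;L_1)^N$ under $P^{-1}(L_2)\subset L_1$ is provable by a direct ultrametric computation: pick the nearest point $\ell\in L_1$ to $x$; since $\ell$ (or rather a point of $P^{-1}(L_2)$ near $\ell$) maps into $L_2$, factor $P(x)-P(\ell') = \prod(x - \beta_j)$ over the roots of $P - P(\ell')$, estimate $|P(x)-P(\ell')| = \prod |x-\beta_j|$ where one factor is $|x-\ell'|\approx \delta(x;L_1)$ (up to the gap between $\ell$ and the nearest root $\ell'$ of $P-P(\ell')$ lying over $L_2$, which is itself governed by $P^{-1}(L_2)\subset L_1$) and the others are bounded, so $\delta(P(x);L_2) \le |P(x)-P(\ell')| \le C\,\delta(x;L_1)$ gives the \emph{upper} bound — I need the \emph{lower} bound, so the correct move is the contrapositive: if $\delta(P(x);L_2)$ is small, find $y\in L_2$ with $|P(x)-y|$ small, pick a root $\beta$ of $P - y$; then some $|x-\beta|$ is small, and since $\beta \in P^{-1}(L_2)\subset L_1$ we get $\delta(x;L_1)\le |x-\beta|$ small, with the quantitative relation $|x-\beta|^N \ll \prod_j|x-\beta_j| = |P(x)-y| \le \delta(P(x);L_2) + (\text{diam})\cdot(\text{something})$ — choosing the root $\beta$ minimizing $|x-\beta|$ makes $|x-\beta|^N \le |P(x)-y|$, hence $\delta(x;L_1)^N \ll \delta(P(x);L_2)$ after taking $y$ to nearly realize the distance. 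This is the heart of the matter and the one spot where the hypothesis $f^{-1}(L_2)\subset L_1$ (not just $f^{-1}(L_2)\subset \overline{L_1}$ or similar) is used in full strength, together with closedness of $L_1, L_2$ to ensure distances are attained; then I transport back from $P$ to $f$ on $E(0,s)$, set $m = N$ (the number of zeros of $f$ in $E(0,s)$, counted with multiplicity), and pick any $0<s'<s$ as the required radius, renaming $s'$ to $s$.
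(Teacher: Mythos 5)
Your overall strategy --- Weierstrass factorization on a subdisk, reduction to a polynomial $P$, then locating a root of $P-y$ near $x$ and feeding it into the hypothesis $f^{-1}(L_2)\subset L_1$ --- is a legitimate alternative in outline, but two steps fail as written. First, the reduction from $f$ to $P$: writing $f=P\cdot u$ with $u$ a unit on $E(0,s)$, there is no fixed set $\tilde L_2$ with $\delta(f(x);L_2)\asymp\delta(P(x);\tilde L_2)$ unless $u$ is constant; more damagingly, a root $\beta$ of $P-y$ satisfies $P(\beta)=y$ but $f(\beta)=y\,u(\beta)\neq y$ in general, so $\beta$ need not belong to $f^{-1}(L_2)$, and $P^{-1}(L_2)$ is simply not contained in $L_1$ --- the hypothesis gives no information about the points you produce. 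The correct version of your idea applies Weierstrass preparation to $f-y$ for each $y\in L_2$ (only the constant coefficient changes with $y$, so the degree and the lower bound for the unit can be made uniform), so that the relevant roots really are $f$-preimages of $y$. Second, even then those roots must be shown to lie in the domain $E(0,r)$ where the hypothesis applies (fixable: the nearest root is within $(|f(x)-y|/|c|)^{1/N}$ of $x$, so either it lies in the disk or $|f(x)-y|$ is bounded below); and distances to closed subsets of $\C_p$ need not be attained, since $\C_p$ is not locally compact, so one must use near-minimizers rather than the exact minimizers you invoke. Your first ``regime'' discussion is inconclusive, but it is in any case superseded by the final polynomial argument, so that is not the essential problem.

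The paper's proof is much simpler because it exploits the freedom to shrink $s$: only the behaviour of $f$ at the single point $0$ matters. Either $f(0)\notin L_2$, and then $f(E(0,s))$ misses a neighbourhood of $L_2$ for small $s$, giving the bound with $m=0$; or $f(0)\in L_2$, whence $0\in L_1$, and with $m=\ord_0(f-f(0))$ one has $|f'(x)|=|mc_m|\,|x|^{m-1}$ near $0$ together with the exact ultrametric image formula $f(E(x,u))=E(f(x),|f'(x)|u)$ for small $u$; taking $u<\delta(x;L_1)$ and using $f^{-1}(L_2)\subset L_1$ yields $\delta(f(x);L_2)\geq|f'(x)|\,\delta(x;L_1)$, and the inequality $|x|\geq\delta(x;L_1)$ (valid because $0\in L_1$) converts this into $\delta(f(x);L_2)\geq|mc_m|\,\delta(x;L_1)^m$. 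No global control of the zeros of $f$, of the unit part, or of $f^{-1}(L_2)$ is needed. I recommend reworking the argument along these local lines, or at minimum repairing the two gaps identified above.
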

\begin{proof}
Write $f=\sum c_n z^n$.
We may assume that there exists $a\in\C_p^\times$ 
such that $r=\abs a$; 
composing~$f$ with homographies which map $E(0,r)$ to $E(0,1)$
and $f(E(0,r))$ into the disk $E(0,1)$, 
we assume that $r=1$ and that $\abs {c_n}\leq 1$ for all~$n$.
(Recall from~\S\ref{ss.delta} that homographies are Lipschitz for the
distance~$\delta$.)

Let us first treat the case where $f(0)\not\in L_2$.
Then there exists a real number~$s>0$ such that 
$E(f(0),s)\cap L_2=\emptyset$.
For every $x\in E(0,1)$ such that $\abs x<s$, one has $\abs{f(x)-f(0)}<s$,
hence $\delta(f(x);L_2)>s$. It suffices to set $m=0$ and $c=s$.

We now assume that $f(0)\in L_2$, hence $0\in L_1$. Let $m=\ord_0(f-f(0))$.
Since $f'(z)=\sum_{n\geq m} nc_nz^{n-1}$, there exists a
real number~$s$ such that  $0<s\leq 1$ and such that
$\abs{f'(z)}=\abs{m c_m} \abs{z}^{m-1}$ provided $\abs z\leq s$.
Moreover, $\abs{f^{(n)}(z)/n!}\leq 1$ for every $n\geq0$
and any $z\in E(0,1)$.
Considering the Taylor expansion
\[ f(y)=\sum_{n\geq0} \frac1{n!}f^{(n)}(x) (y-x)^n, \]
we then see that there exists a real number~$s' $ such that 
\[ f( E(x,u)) = E(f(x), \abs{f'(x)} u) \]
for every real number~$u$ such that $0<u\leq s'$
and every $x\in E(0,1)$ such that $0<\abs x\leq s$.
If $u<\delta(x;L_1)$, then $E(x,u)\cap L_1=\emptyset$,
hence $E(f(x),\abs{f'(x)}u)\cap L_2=\emptyset$; consequently,
$\delta(f(x);{L_2})\geq \abs{f'(x)} \delta(x;{L_1})$.
Since $0\in L_1$, one has $\abs x\geq \delta(x;{L_1})$.
Consequently,
\[ \delta(f(x);L_2) \geq \abs{mc_m} \abs x^{m-1}\delta(x;{L_1})
 \geq \abs{mc_m} \delta(x;{L_1})^m. \]
This concludes the proof.
\end{proof}

\section{Automorphisms of curves}

The following result is already present in~\cite{pila2013}.
For the clarity of exposition, we isolate it as a lemma.

\begin{lemm}
Let $k$ be an algebraically closed field of characteristic zero,
let $B$ be a smooth connected projective $k$-curve,
let $f\colon B\to\mathbf P_1$ be a nonconstant morphism and
let $R_f\subset B$ be the ramification locus of~$f$
(the set of points of~$B$ at which $f$ is not étale)
and let $\Delta_f=f(R_f)$ be its discriminant locus.

Assume that there exist automorphisms~$g\in\Aut(\mathbf P_1)$ 
and $h\in\Aut(B)$ such that $f\circ h=g\circ f$.
Assume that~$g$ has infinite order.
Then $B$ is isomorphic to~$\P_1$, and one of the following cases holds:
\begin{itemize}
\item The morphism~$ f$ is an isomorphism (and $\Delta_f=\emptyset$);
\item One has $\Card(R_f)=2$, and $g(\Delta_f)=\Delta_f$.
\end{itemize}
\end{lemm}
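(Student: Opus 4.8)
The plan is to combine three ingredients: the functional equation $f\circ h=g\circ f$, the description of the finite orbits of an infinite-order element of $\Aut(\mathbf P_1)=\PGL(2,k)$, and the Riemann--Hurwitz formula.

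First I would read the functional equation at the level of ramification. Since $g$ and~$h$ are automorphisms, hence étale everywhere, the non-étale locus of $f\circ h$ is $h^{-1}(R_f)$ while that of $g\circ f$ is $R_f$; comparing them gives $h(R_f)=R_f$. Applying~$f$ and using $f\circ h=g\circ f$, one gets
\[ g(\Delta_f)=g\bigl(f(R_f)\bigr)=f\bigl(h(R_f)\bigr)=f(R_f)=\Delta_f, \]
so $\Delta_f$ is $g$-invariant; it is moreover finite, because $f$ is a finite morphism, so $R_f$ is a proper closed, hence finite, subset of~$B$. Note that this step does not use the order of~$g$.

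Next I would use that $g$ has infinite order. Over an algebraically closed field of characteristic zero, a nontrivial element of $\PGL(2,k)$ is conjugate either to $z\mapsto\lambda z$ or to $z\mapsto z+1$; if it has infinite order, every orbit other than the set of its fixed points is infinite --- this is where $\operatorname{char}(k)=0$ enters, to prevent the unipotent case from being periodic --- and it has at most two fixed points. Hence the finite $g$-invariant set~$\Delta_f$ satisfies $\Card(\Delta_f)\le 2$.

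Finally I would apply Riemann--Hurwitz. Put $d=\deg f$, let $g_B$ be the genus of~$B$, set $b=\Card(\Delta_f)$, and for $Q\in\Delta_f$ write $r_Q=\#f^{-1}(Q)$, so $1\le r_Q\le d-1$. Tameness in characteristic zero gives
\[ 2g_B-2=-2d+\sum_{Q\in\Delta_f}(d-r_Q)=(b-2)d-\sum_{Q\in\Delta_f}r_Q. \]
If $b=0$, this reads $g_B=1-d$, forcing $d=1$: then $f$ is an isomorphism, $\Delta_f=\emptyset$ and $B\cong\mathbf P_1$, which is the first case. If $b=1$, then $2g_B-2\le-d-1$, and since $\Delta_f\neq\emptyset$ forces $d\ge 2$ this contradicts $g_B\ge 0$; so $b=1$ is impossible. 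If $b=2$, say $\Delta_f=\{Q_1,Q_2\}$, the formula becomes $2g_B-2=-(r_{Q_1}+r_{Q_2})\le-2$, which forces $g_B=0$ and $r_{Q_1}=r_{Q_2}=1$; hence $B\cong\mathbf P_1$, each~$Q_i$ has a unique preimage~$P_i$, which is totally ramified since $d\ge 2$, so $R_f=\{P_1,P_2\}$ and $g(\Delta_f)=\Delta_f$ by the first step --- the second case. The underlying geometry is entirely standard; the only point requiring care is the case analysis here, in particular ruling out $b=1$ and the degenerate possibility $d=1$ when $b=2$, and keeping in mind that it is precisely the infinite order of~$g$ (together with $\operatorname{char}(k)=0$) that forces $\Delta_f$ to be so small.
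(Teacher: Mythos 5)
Your proof is correct. The first two steps coincide with the paper's: you get $h(R_f)=R_f$ by comparing the non-\'etale loci of $f\circ h$ and $g\circ f$, hence $g(\Delta_f)=\Delta_f$, and then the fact that a finite invariant set of an infinite-order homography consists of fixed points gives $\Card(\Delta_f)\le 2$ (the paper phrases this as: some power of~$g$ fixes $\Delta_f$ pointwise, and only the identity fixes three points). Where you genuinely diverge is the endgame. The paper argues via covering space theory: for $\Card(\Delta_f)\le 1$ the complement $\P_1\setminus\Delta_f$ is simply connected, so $f$ is an isomorphism; for $\Card(\Delta_f)=2$ all finite \'etale covers of $\mathbf G_m$ are Kummer, so $B\simeq\P_1$ and $f$ is conjugate to $z\mapsto z^n$, and it then still needs a separate argument (that $h$ has infinite order, deduced from $f$ surjective and $g$ of infinite order) to go from $h(R_f)=R_f$ to $\Card(R_f)\le 2$. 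You replace all of this by Riemann--Hurwitz: the identity $2g_B-2=(b-2)d-\sum_Q r_Q$ kills $b=1$, forces $d=1$ when $b=0$, and when $b=2$ forces $g_B=0$ and $r_{Q_1}=r_{Q_2}=1$, which yields $B\simeq\P_1$ and $R_f=\{P_1,P_2\}$ in one stroke, with no further appeal to~$h$. Your route is arguably more elementary (no $\pi_1$, no classification of Kummer covers) and slightly shorter; the paper's route gives the extra information that $f$ is conjugate to $z\mapsto z^n$ and that $g$ is a homothety, which it does not actually need for the statement. Both are complete; the only points in yours that deserve the care you already gave them are ruling out $b=1$ via $d\ge2$ and noting $d\ge2$ in the case $b=2$ so that the unique preimages $P_i$ are indeed ramification points.
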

\begin{proof}
By construction, $f$ induces a finite étale
covering of $\P_1\setminus\Delta_f$.

Let $b\in R_f$; one has $df(b)=0$, hence $d(f\circ h)(b)=d(g\circ f)(b)=0$.
Since $h$ is an automorphism of~$B$,
this implies that $df(h(b))=0$, hence $h(b)\in R_f$.
We thus have $h(R_f)\subset R_f$, hence $h(R_f)=R_f$, 
because $h$ is an isomorphism.
Consequently, $g(\Delta_f)=\Delta_f$,
so that some power of~$g$ fixes $\Delta_f$ pointwise.
Since the identity is the only homography that fixes $3$~points
and $g$ has infinite order, this implies that $\Card(\Delta_f)\leq 2$.

If $\Card(\Delta_f)\leq 1$, then $\P_1\setminus\Delta_f$ 
is simply connected, hence $f$ is an isomorphism (and $\Delta_f=\emptyset$).

Otherwise, one has $\Card(\Delta_f)=2$. Let $n=\deg(f)$.
Up to a change of projective coordinates in~$\mathbf P_1$,
we may assume that $\Delta_f=\{0,\infty\}$.
Then  $g$ is a homothety, because it leaves~$\Delta_f$ invariant
and has infinite order (otherwise, it would be of the form $g(z)=a/z$).
Since all finite étale coverings of $\P_1\setminus\Delta_f$ are of
Kummer type (equivalently, $\pi_1(\P_1\setminus\Delta_f)=\Z$),
one has $B\simeq\mathbf P_1$ and
the morphism~$f$ is conjugate to the morphism $z\mapsto z^n$
from~$\mathbf P_1$ to itself.

We then remark that $h$ is a homography of infinite order. Indeed,
if $h^e=\id_B$, then $f=g^{e}\circ f$, hence $g^e=\id$ since $f$ is surjective,
hence $e=0$, since $g$ has infinite order.
As above, the formula $h(R_f)=R_f$ then implies that $\Card(R_f)\leq 2$.
On the other hand, $\Card(R_f)\geq\Card(\Delta_f)=2$,
hence $\Card(R_f)=2$.
\end{proof}

\begin{prop}\label{prop.aut-curves}
Let $k$ be a field of characteristic zero.
Let $B$ be an integral $k$-curve in~$\P_1^n$ 
possessing a smooth $k$-rational point.
Let $\Gamma_B$ be the stabilizer of~$B$ in $(\Aut(\P_1))^n$
and let $\Gamma_1\subset \Aut(\P_1)$ be its image under the first projection.
Assume that $\Gamma_1$ contains an element of infinite order.
Then, one of the following cases holds:
\begin{enumerate}
\item The morphism $p_1|_B$ is constant;
\item The morphism $p_1|_B$ is an isomorphism
and the components of its inverse are either constant or homographies;
\item There is a subset of~$\P_1(\bar k)$ of cardinality~$2$ 
which is invariant under every element of~$\Gamma_1$.
\end{enumerate}
\end{prop}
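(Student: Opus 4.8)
The plan is to reduce Proposition~\ref{prop.aut-curves} to the preceding lemma by normalizing the curve~$B$ and transporting the group action to the normalization. First I would let $\nu\colon \tilde B\to B$ be the normalization morphism, so that $\tilde B$ is a smooth integral $k$-curve; after base change to $\bar k$ (which does no harm, since all the conclusions are geometric in nature and the hypothesis that $\Gamma_1$ contains an element of infinite order persists), I may assume $k=\bar k$ and that $\tilde B$ is a smooth connected projective curve. Set $f=p_1\circ\nu\colon \tilde B\to\P_1$. If $f$ is constant we are in case~(1), so assume $f$ is nonconstant.

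Next I would pick an element $g\in\Gamma_1$ of infinite order; by definition of~$\Gamma_1$ it is the first component of some $\tilde g=(g,g_2,\dots,g_n)\in\Gamma_B\subset(\Aut(\P_1))^n$ stabilizing~$B$. The automorphism $\tilde g$ of $\P_1^n$ restricts to an automorphism of~$B$, which lifts uniquely to an automorphism $h$ of the normalization~$\tilde B$, and by construction $p_1\circ\tilde g = g\circ p_1$ on~$B$, hence $f\circ h = g\circ f$ on~$\tilde B$. Thus the hypotheses of the lemma are satisfied for $(\tilde B, f, g, h)$. The lemma then yields that $\tilde B\simeq\P_1$ and that either $f$ is an isomorphism with $\Delta_f=\emptyset$, or $\Card(R_f)=2$ and $g(\Delta_f)=\Delta_f$.

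In the second case, $\Delta_f$ is a subset of $\P_1(\bar k)$ of cardinality at most~$2$ (it is the image of the two-point set $R_f$, and is nonempty since $f$ ramifies) that is invariant under~$g$. To get invariance under the whole group~$\Gamma_1$, I would run the same argument with an arbitrary $g'\in\Gamma_1$ of infinite order: each such $g'$ preserves a two-point subset $\Delta_{g'}$, but $\Delta_{g'}$ is forced to equal $\Delta_f$ because $\Delta_f$ is determined by $B$ alone, namely $\Delta_f = p_1(\{x\in B : p_1|_B \text{ is not étale at } \nu^{-1}(x)\})$, independently of the chosen element. Finally, since $\Gamma_1$ contains an element of infinite order and the set of homographies fixing $\Delta_f$ setwise is a subgroup, any torsion element $g'\in\Gamma_1$ also preserves $\Delta_f$: indeed $\langle g', g\rangle$-considerations or, more directly, the observation that $h$ normalizes things forces $g'(\Delta_f)=\Delta_f$ as well, because $R_f$ (hence $\Delta_f$) is intrinsic to the covering $f$ and every element of $\Gamma_B$ permutes $R_f$. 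This gives case~(3). In the first case of the lemma, $f$ is an isomorphism $\P_1\xrightarrow{\sim}\tilde B$; since $\tilde B\to B$ is then a birational morphism onto an integral curve possessing a smooth rational point, $B$ itself is isomorphic to $\P_1$ and $p_1|_B$ is an isomorphism. The other components $p_i|_B$, viewed through this isomorphism, are morphisms $\P_1\to\P_1$; I would argue they are each either constant or degree-one: an element $\tilde g\in\Gamma_B$ with first component of infinite order conjugates $p_i|_B$ to $g_i\circ (p_i|_B)\circ g^{-1}$, and iterating, the degrees of $p_i|_B$ stay bounded only if... — more simply, $B$ is the graph of the tuple $(p_2|_B,\dots,p_n|_B)$ over $\P_1$ via $p_1|_B$, and invariance under the infinite group forces each non-constant $p_i|_B$ to have an automorphism commuting with an infinite-order homography, whence it is itself a homography. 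This is case~(2).

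The main obstacle I anticipate is the bookkeeping in the last two paragraphs: first, checking carefully that passing to the normalization is legitimate and that the lifted automorphism $h$ genuinely satisfies $f\circ h = g\circ f$ (this uses the universal property of normalization and that $\tilde g$ restricts to an automorphism of $B$, not merely a self-map); and second, in case~(2), pinning down that the inverse components are homographies rather than higher-degree maps — here one again invokes the previous lemma, or its proof, applied to each $p_i|_B$ with the infinite-order symmetry, to conclude that any non-constant component is an isomorphism.
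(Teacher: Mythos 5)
Your overall strategy (normalize, lift the stabilizer to the normalization, apply the preceding lemma to $f=p_1\circ\nu$) is the same as the paper's, and your treatment of the constant case and of the ramified case ($\Card(R_f)=2$, with $\Delta_f$ intrinsic to $B$ and hence preserved by all of $\Gamma_1$) is sound. But there is a genuine error in your handling of the case where $f$ is an isomorphism: you claim that the infinite-order symmetry forces each non-constant component $f_j$ of the inverse to be a homography, and you conclude that case~(2) always holds in this branch. That is false. Take $B=\{(z,z^2)\}\subset\P_1^2$ with $\Gamma_B\ni(\lambda z,\lambda^2 z)$: here $p_1|_B$ is an isomorphism, $g_1$ has infinite order, yet $f_2(z)=z^2$ is neither constant nor a homography. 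A non-constant self-map of $\P_1$ intertwining two infinite-order homographies ($g_j\circ f_j=f_j\circ g_1$) need not be an isomorphism — the lemma you are invoking says precisely that the alternative is $\Card(R_{f_j})=2$.

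The correct conclusion in that alternative is not a contradiction but case~(3): if some $f_j$ is neither constant nor a homography, its ramification locus $R_{f_j}\subset\P_1(\bar k)$ is a two-point set, and for \emph{every} $g'=(g'_1,\dots,g'_n)\in\Gamma_B$ the relation $g'_j\circ f_j=f_j\circ g'_1$ forces $g'_1(R_{f_j})=R_{f_j}$, so all of $\Gamma_1$ preserves this two-point set. This is exactly how the paper closes the argument, and it is why the proposition has three cases rather than two. (A minor secondary point: your base change to $\bar k$ needs the observation that $B_{\bar k}$ stays integral, which is where the hypothesis of a smooth $k$-rational point is actually used.)
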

\begin{proof}
Assume that $p_1|_B$ is not constant.
Let $\nu\colon B'\to B$ be the normalization of~$B$ and 
let $p'_1=p_1\circ\nu\colon B'\to\P_1$.
Let $g=(g_1,\dots,g_n)$ be an element of~$\Gamma_B$;
There exists a unique automorphism~$h$ of~$B'$ that lifts~$g$;
then $p'_1\circ h=g_1\circ p'_1$.
Since the curve~$B$ has smooth rational points, 
the curve~$B'$ is geometrically integral.
Choosing~$g$ such that $g_1$ has infinite order,
the preceding lemma implies that $\Card(R_{p'_1})\in\{0,2\}$.

Let us first assume that $\Card(R_{p'_1})=2$. Then
$\Card(\Delta_{p'_1})=2$ as well.
Moreover, the relation $p'_1\circ h=g_1\circ p'_1$ implies
that $g_1(\Delta_{p'_1})\subset \Delta_{p'_1}$, so that case~(3) holds.

Let us now assume that $\Card(R_{p'_1})=0$ and fix~$g$
such that $g_1$ has infinite order. By the preceding
lemma, $p'_1$ is an isomorphism;
this implies that $p_1|_B$ is an isomorphism as well.
Let $f$ be its inverse and 
let $f_1,\dots,f_n$ be its components; assume that 
case~(2) does not hold, that is, for some~$j$,
the rational map~$f_j$ is neither constant, nor a homography;
its ramification locus~$R_j$ is non-empty.
Since $g_1$ has infinite order, 
the relation $g_j\circ f_j=f_j\circ g_1$ implies that $g_j$
has infinite order as well.
By the preceding lemma, one has $\Card(R_j)=2$.
Let then $g'=(g'_1,\dots,g'_n)$ be any element of~$\Gamma_B$.
The relation $g'_j\circ f_j = f_j\circ g'_1$
implies that $g'_1(R_j)\subset R_j$,
so that case~(3) holds.
\end{proof}

\section{Proof of theorem~\ref{mt}}
\label{sec.proof}


We will reduce the proof of theorem~\ref{mt} to its following variant:

\begin{prop}\label{mt-bis}
Let $F$ be a finite extension of $\Q_p$ and
let $(\Gamma_i)_{1\leq i\leq n}$ be a finite family
of  \emph{arithmetic} Schottky subgroups of $\PGL(2,F)$ of ranks~$\geq 2$.
As above,
let us set $\Omega=\prod_{i=1}^n \Omega_{\Gamma_i}$ and
$X=\prod_{i=1}^n X_{\Gamma_i}$, 
and let $p\colon \Omega \to X^\an$ be the morphism deduced
from the morphisms $p_{\Gamma_i}\colon \Omega_{\Gamma_i}\to X_{\Gamma_i}^\an$.

Let $V$ be an irreducible algebraic subvariety of~$X$
and let $W$ be an irreducible algebraic
subvariety of~$\Omega$, maximal among those contained in $p^{-1}(V^\an)$. 
If $W$ is geometrically irreducible, then it is flat.
\end{prop}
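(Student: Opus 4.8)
The plan is to run the Pila--Zannier strategy: from the assumption that $W$ is not flat, produce a large family of group elements translating~$W$ inside $p^{-1}(V^{\an})$, feed it into the $p$-adic Pila--Wilkie theorem (Theorem~\ref{theo.pw}) to extract a positive-dimensional \emph{algebraic} family of such translates, and contradict the maximality of~$W$. I argue by induction (on, say, $n+\dim W$); the cases $\dim W=0$ ($W$ an $F$-point, hence flat) and $n=1$ (a positive-dimensional such $W$ must be all of $\Omega_{\Gamma_1}$, hence flat) are immediate. So assume $\dim W\geq1$, $n\geq2$, and the statement for smaller values of~$n+\dim W$, and suppose for contradiction that $W$ is not flat. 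Let $B\subset(\P_1^n)_F$ be the Zariski closure of~$W$; it is geometrically irreducible of dimension~$\dim W$, and after reordering coordinates I may assume $p_1|_B$ is non-constant, hence dominant onto~$\P_1$. (If some projection $p_i|_B$ is constant, then $W$ is a product $\{c_i\}\times W'$ with $W'$ maximal, geometrically irreducible, in a situation with $n-1$ factors, and the inductive hypothesis applies; so I may assume every $p_i|_B$ is dominant onto~$\P_1$.)

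\emph{Step 1 (geometric lower bound).} Since each $\mathscr L_{\Gamma_i}$ is nowhere dense in~$\P_1(F)$ and $p_1|_B$ is dominant, there is a point~$\bar w$ in the analytic closure of~$W$ with $\bar w_1\in\mathscr L_{\Gamma_1}$ but $\bar w_j\in\Omega_{\Gamma_j}$ for $j\neq1$; so near~$\bar w$ the coordinates $w_j$ ($j\neq1$) of points of~$W$ stay in a fixed compact analytic subdomain of~$\Omega_{\Gamma_j}$, hence (Corollary~\ref{coro.proper}) meet only finitely many translates of a good fundamental domain~$\mathfrak F_{\Gamma_j}$. Fixing good fundamental domains $\mathfrak F_i$ as in~\S\ref{ss.fund-domain}, writing $\mathfrak F=\prod\mathfrak F_i$, and using that the disks $B(\gamma_1)$ shrink like $c^{-\ell_{\Gamma_1}(\gamma_1)}$ while $\mathscr L_{\Gamma_1}$ is perfect --- so that $W$ meets exponentially many of the level-$N$ disks near~$\bar w_1$ --- together with Lemma~\ref{lemm.heights-length} to convert word length into height, I obtain a real number~$\alpha>0$ with
$$ \Card\{\gamma\in\Gamma\sozat \gamma\cdot\mathfrak F\cap W\neq\emptyset,\ H(\gamma)\leq T\}\ \gg\ T^\alpha . $$
This is the point at which the hypothesis that the $\Gamma_i$ have rank~$\geq2$ is used.

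\emph{Step 2 (Pila--Wilkie and an algebraic family).} If $\gamma$ is counted above then $p(\gamma W)=p(W)\subset V^{\an}$, so $\gamma$ lies in the set~$Z$ of $g\in\PGL(2,F)^n$ carrying $W\cap\mathfrak G$ into $p^{-1}(V^{\an})$, for a suitable fixed compact $\mathfrak G\subset\Omega$; through Weil restriction to~$\Q_p$ this $Z$ is $\Q_p$-subanalytic, by Lemma~\ref{lemm.remark} and the fact that each $p_{\Gamma_i}$ is an isomorphism on a fundamental domain. As $\Gamma\subset\PGL(2,K)$ for a number field~$K\subset F$ --- this is where arithmeticity is needed --- $Z$ has $\gg T^\alpha$ many $K$-rational points of height~$\leq T$, so by Theorem~\ref{theo.pw} with $\eps<\alpha$ some block of the resulting family absorbs arbitrarily many of them as $T\to\infty$ and is therefore positive-dimensional. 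By Proposition~\ref{prop.closure-analytic}, applied in this ambient space, every~$g$ in the Zariski closure~$H'$ of that block still carries $W\cap\mathfrak G$ into $p^{-1}(V^{\an})$. Fixing $\gamma_0\in\Gamma\cap H'$, the Zariski closure of $\bigcup_{g\in H'}gB$ meets~$\Omega$ in an algebraic subvariety of $p^{-1}(V^{\an})$ containing $\gamma_0W$, which is again maximal because $\gamma_0\in\Gamma$; if its dimension exceeded $\dim W$ we would contradict maximality, so $gB=B$ for every $g\in H'$. Thus the stabiliser of~$B$ in $\PGL(2,F)^n$ is positive-dimensional, and (since $\gamma_0^{-1}H'\cap\Gamma$ also lies in it) the stabiliser of $B$ in~$\Gamma$ has many $K$-rational points.

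\emph{Step 3 (structure theory).} A positive-dimensional algebraic subgroup of $\PGL(2,F)^n$ has an element of infinite order in every factor on which it projects non-trivially. Base-changing to~$\C_p$, where~$B$ acquires a smooth rational point, and applying Proposition~\ref{prop.aut-curves} to suitable two-coordinate projections --- together with the inductive hypothesis when $W$ is not a curve, to cut down the dimension --- one finds: the "$2$-element invariant subset of $\P_1$'' alternative is impossible, because the corresponding subgroup of a $\Gamma_i$ would then have few $K$-rational points by Lemma~\ref{lemm.few-rational}, contradicting the end of Step~2; the "constant projection'' alternative is absorbed by the inductive hypothesis after deleting that factor; and the remaining alternative asserts exactly that each surviving projection of~$B$ is a homography of another. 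Hence $B$ is cut out by equations $z_i=c_i$ and $z_j=g_{ij}z_i$, so $W_{\C_p}$ --- and therefore, $W$ being geometrically irreducible, $W$ itself --- is flat, contradicting our assumption and completing the proof. The step I expect to be the main obstacle is the lower bound of Step~1: one must ensure that the exponential proliferation of fundamental domains forced near~$\mathscr L_{\Gamma_1}$ by the positive-dimensionality of~$W$ is not destroyed by the behaviour of the remaining coordinates, and turn the resulting word-length estimate into a height estimate uniformly over~$\Gamma$ --- this is where the explicit description of fundamental domains in~\S\ref{ss.fund-domain}, Proposition~\ref{prop.proper} with Corollary~\ref{coro.proper}, the perfectness of the limit sets, and Lemma~\ref{lemm.heights-length} all come together.
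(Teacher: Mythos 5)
Your overall architecture coincides with the paper's (many $\Gamma$-translates of $W$ meeting a fixed fundamental set, the Cluckers--Comte--Loeser theorem, maximality turning a positive-dimensional block into a large stabilizer, then Proposition~\ref{prop.aut-curves} and Lemma~\ref{lemm.few-rational}), but Step~1 --- which you yourself flag as the crux --- rests on a false premise. You assert that there is a boundary point $\bar w$ of $W$ with $\bar w_1\in\mathscr L_{\Gamma_1}$ but $\bar w_j\in\Omega_{\Gamma_j}$ for every $j\neq1$, so that the remaining coordinates stay in a compact part of $\Omega_{\Gamma_j}$ and only finitely many translates of $\mathfrak F_j$ occur. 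First, $\mathscr L_{\Gamma_i}$ need not be nowhere dense in $\P_1(F)$ (in the Cherednik--Drinfeld case it is all of $\P_1(\Q_p)$). More seriously, such a point need not exist: if $W$ is locally the graph of an analytic map whose values degenerate exactly when its argument does (the diagonal, or any graph sending $\mathscr L_{\Gamma_1}$-adjacent points to $\mathscr L_{\Gamma_2}$-adjacent points), then \emph{every} boundary point of $W$ lying over $\mathscr L_{\Gamma_1}$ also degenerates in the other coordinates, and non-flatness of $W$ does not rule this out. The paper's Lemma~\ref{lemm.projection} only arranges the weaker, correct statement that whenever some coordinate of a nearby point of $Y^\an$ lies in its limit set, the first one does; the coordinates $j$ with $q_j(\xi)\in\mathscr L_{\Gamma_j}$ are then handled by Lemma~\ref{lemm.distance} (the distance of $\phi_j(x)$ to $\mathscr L_{\Gamma_j}$ is bounded below by a power of the distance of $x$ to $\mathscr L_{\Gamma_1}$), combined with Proposition~\ref{prop.proper} and Lemma~\ref{lemm.heights-length}, which convert this into the bound $H(\gamma_j)\ll c^{\ell_{\Gamma_1}(\gamma_1)}$ for the component $\gamma_j$ needed to bring $\phi_j(a(\gamma_1))$ back into $\mathfrak F_j$. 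Your proposal has no substitute for this mechanism; without it the heights of the translating elements are uncontrolled and the count $\gg T^{\alpha}$ does not follow.

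A second, smaller gap: in Step~3 you apply Lemma~\ref{lemm.few-rational} to ``the corresponding subgroup of a $\Gamma_i$'', but Step~2 only yields that the stabilizer of $B$ inside $\Gamma$ has many $K$-rational points, and a subgroup of $\prod_i\Gamma_i$ can have many points while its projection to a single factor is small (e.g.\ trivial). The paper avoids this by constructing $\Upsilon$ so that $q_1$ is injective on it and the other ``\'etale'' components are the identity, whence the \emph{image in $\Gamma_1$} of the stabilizer has many rational points (Proposition~\ref{prop.stabilizer-infinite}); you would need to build the analogous injectivity into your Steps~1 and~2 before the contradiction with Lemma~\ref{lemm.few-rational} goes through.
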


\begin{lemm}
Proposition~\ref{mt-bis} implies theorem~\ref{mt}.
\end{lemm}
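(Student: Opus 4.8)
The goal is to deduce the full statement of Theorem~\ref{mt} from its geometrically-irreducible version, Proposition~\ref{mt-bis}. The idea is to reduce to the geometrically irreducible case by a base-change argument, using the fact that the notion of ``flat'' is insensitive to passing between a geometrically irreducible variety over~$F$ and its irreducible components after extension of scalars (as was observed in the discussion of flat subvarieties preceding the statement of Theorem~\ref{mt}).

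First I would let~$W$ be a maximal irreducible algebraic subvariety of~$\Omega$ contained in~$p^{-1}(V^\an)$, and let $W'$ be an irreducible component of~$W_{\C_p}$. Since $\Omega$ is defined over~$F$ and $W'$ is cut out by finitely many polynomial equations, there is a finite extension~$F'$ of~$F$ inside~$\C_p$ such that $W'$ is defined over~$F'$, i.e.\ comes by base change from an irreducible algebraic subvariety~$W''$ of~$\Omega_{F'}$; enlarging~$F'$ if necessary, I may assume $W''$ is geometrically irreducible over~$F'$. The main point to check is that $W''$ is still \emph{maximal} among irreducible algebraic subvarieties of~$\Omega_{F'}$ contained in~$p^{-1}(V^\an)$: indeed $p_{F'}^{-1}((V_{F'})^\an)$ is the base change of $p^{-1}(V^\an)$, so if $W''$ were strictly contained in a larger such subvariety $\widetilde W$, then $\widetilde W$ would dominate an $F$-subvariety of~$\Omega$ (take the Zariski closure of the image of the finitely many Galois conjugates of~$\widetilde W$, or note that the Zariski closure of~$\widetilde W$ in~$\Omega$ contains~$W$ and is contained in~$p^{-1}(V^\an)$), contradicting the maximality of~$W$. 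Here one uses that $V$, $p$, and $\Omega$ are all defined over~$F$, and that base change to $\C_p$ factors through~$F'$.

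Having arranged this, Proposition~\ref{mt-bis}, applied over the field~$F'$ in place of~$F$ (the hypotheses are stable: the $\Gamma_i$ are arithmetic subgroups of $\PGL(2,F)\subset\PGL(2,F')$ of rank $\geq2$, and their limit sets and the spaces $\Omega_{\Gamma_i}$, $X_{\Gamma_i}$ base change correctly), shows that $W''$ is flat over~$F'$. Then $W'=W''_{\C_p}$ is flat over~$\C_p$, being defined by the same equations of type (1) and (2). Since $W'$ was an arbitrary irreducible component of~$W_{\C_p}$, this proves that every irreducible component of~$W_{\C_p}$ is flat, which is exactly the conclusion of Theorem~\ref{mt}.

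\textbf{Main obstacle.}
The step I expect to require the most care is the verification that $W''$ over~$F'$ is maximal, not merely that it is an irreducible algebraic subvariety contained in $p^{-1}(V^\an)$. One must rule out the possibility that, after enlarging the base field, a strictly bigger algebraic subvariety contained in $p^{-1}(V^\an)$ appears through~$W''$; the resolution is that any algebraic subvariety of~$\Omega_{F'}$ descends, after taking Zariski closure and Galois-conjugating, to an $F$-algebraic subvariety of~$\Omega$ with the same dimension, and containment in $p^{-1}(V^\an)$ is preserved because that locus is defined over~$F$. Once this descent-of-maximality point is nailed down, the rest is formal.
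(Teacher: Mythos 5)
Correct, and essentially the paper's own argument: you descend an irreducible component of~$W_{\C_p}$ to a finite extension~$F'$, check maximality over~$F'$ by pushing a hypothetical larger subvariety back down to~$F$ (via its Zariski closure and Galois conjugates), and then apply Proposition~\ref{mt-bis} over~$F'$. The paper is only slightly more careful on one point you take for granted, namely verifying that the descended component is itself irreducible algebraic in the technical sense — equal to $\Omega_{F'}\cap (Y')^\an$ for its Zariski closure~$Y'$ — which it does via Proposition~\ref{lemm.irred} and a dimension count.
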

\begin{proof}
Let $Y$ be the Zariski closure of~$W$ in~$\P_1^n$; by assumption,
$W$ is an irreducible component of~$Y^\an\cap\Omega$.
Let $W_0$ be an irreducible component of~$W_{\C_p}$.
By~\cite[théorème~7.16, (v)]{ducros2009}, there exists
a finite extension~$F'$ of~$F$, contained in~$\C_p$,
and an irreducible component~$W'$
of~$W_{F'}$ such that $W_0=W'_{\C_p}$.
Then $W'$ is geometrically irreducible, as well as its
Zariski closure~$Y'$. By proposition~\ref{lemm.irred},
$\Omega\cap Y'$  is geometrically
irreducible;
the inclusion $W'\subset \Omega\cap Y'$
and the inequality $\dim(W')=\dim (W_0)=\dim(W)=\dim(Y)\geq\dim(Y')$
imply that $W'=\Omega\cap Y'$; in particular, $W'$ is irreducible
algebraic and is contained in $p^{-1}(V_{F'}^\an)$. 
Let us show that it is maximal.
Let $W'_1\subset \Omega_{F'}$ be an irreducible algebraic subvariety
contained in $p^{-1}(V_{F'}^\an)$ such that $W'\subsetneq W'_1$;
let $Y'_1\subset(\P_1^n)_{F'}$ be the Zariski closure of~$W'_1$. 
The image~$Y_1$ of~$Y'_1$ in~$(\P_1^n)_F$ is Zariski closed, 
because $F'$ is a finite extension of~$F$, 
and $Y'_1\subset (Y_1)_{F'}$. Moreover, $Y\subset Y_1$.
There exists a unique irreducible component~$W_1$ of~$\Omega\cap Y_1$
that contains~$W$, and $W'_1$ is an irreducible component
of~$W_{1,F'}$. 
Necessarily, $W_1$ is contained in $p^{-1}(V^\an)$, because
$W'_1\subset p^{-1}(V_{F'}^\an)$; this contradicts the maximality
of~$W$. 

Applying proposition~\ref{mt-bis} to~$W'$, we conclude that
$W'$ is flat. Consequently, $W_0=W'_{\C_p}$ is flat, as
was to be shown.
\end{proof}

\subsection{}
To prove proposition~\ref{mt-bis},
we argue by induction and assume that it
holds if there are less that $n$ factors.
Let $W$ be an irreducible algebraic subvariety of~$\Omega$,
maximal among those 
contained in $p^{-1}(V^\an)$ and geometrically irreducible.
Let $Y$ be an irreducible subvariety of~$\P_1^n$ such that
$W$ is an irreducible component of $Y^\an\cap\Omega$.
By corollary~\ref{coro.irr-comp-2}, $Y$ is geometrically irreducible,
$W=Y^\an\cap\Omega$ and $W$ is topologically dense in~$Y$. 

 
The proof that $W$ is flat requires intermediate steps and will be concluded
in proposition~\ref{prop.W-flat}.

A crucial step will consist in proving that 
the stabilizer of~$W$ inside $\Gamma$ 
has many points of bounded heights 
(proposition~\ref{prop.stabilizer-infinite}). 
To that aim, we define in section~\ref{sec.R} 
an $F$-subanalytic subset~$R$ of
$\PGL(2,F)^n$. The definition, close to that
of a similar set in~\cite{pila:2011,pila2015},
guarantees the following important property
(lemma~\ref{lemm.R}): if  $B$ is a small enough subset of~$R$ then,
for every $g\in B$, 
the translate $(g\cdot Y^\an)\cap\Omega$ is contained in $p^{-1}(V^\an)$, 
and is independent of~$g$. At this point, the maximality of~$W$
is invoked.

The existence of such blocks  is established by applying the
$p$-adic Pila-Wilkie theorem of~\cite{cluckers-comte-loeser2015}.
We thus prove that $R$ has many rational points (lemma~\ref{lemm.R-many});
these points are constructed using the action of the Schottky groups
in a neighborhood of a boundary point~$\xi$,
applying material recalled in section~\ref{sec.schottky}.
The construction of such a point~$\xi$, performed
in lemma~\ref{lemm.projection}, is actually the starting point
of the proof.

The actual statement of proposition~\ref{prop.stabilizer-infinite}
furnishes elements in~$\Gamma$ of a precise form. 
Using proposition~\ref{prop.aut-curves},
we will finally conclude the proof of proposition~\ref{mt-bis}.

\subsection{}\label{ss.section1}
By assumption, $W=Y^\an\cap\Omega$;
consequently, the $j$th projection
$q_j\colon(\P_1)^n\to \P_1$ is constant on~$Y$ if and only if
it is constant on~$W$, if and only if
the $j$th projection from~$X$ to~$X_j$ is constant  on~$V$,
and in this case, its image is an $F$-rational point of~$\P_1$,
because $W$ is geometrically irreducible.
Deleting these constant factors,
we thus assume that there does not exist $j\in\{1,\dots,n\}$
such that the $j$th projection $q_j\colon(\P_1)^n\to \P_1$ is constant on~$Y$.
Consequently, $q_j|_Y\colon Y\to\P_1$ is surjective, for every~$j$;
in particular, $Y^\an $ meets $q_j^{-1}(\mathscr L_{\Gamma_j})$.

Let $m=\dim(Y)$; by what precedes, we have $m>0$,
and $Y^\an\not\subset\Omega$.

\begin{lemm}\label{lemm.projection}
Up to reordering the coordinates, 
there exists
a smooth rigid point $\xi\in Y^\an$
and a connected open neighborhood~$U$ of~$\xi$ in~$(\P_1^n)^\an$
such that the following properties hold:
\begin{enumerate}
\item
The first component~$q_1(\xi)$ of~$\xi$ belongs to
the limit set~$\mathscr L_{\Gamma_1}$ of~$\Gamma_1$;
\item
Letting $J=\{1,\dots,m\}$,
the projection $q_J\colon \P_1^n\to \P_1^J$ induces a finite
étale morphism 
from~$U\cap Y^\an$ to its image in~$(\P_1^J)^\an$;
\item
For every $j\in\{1,\dots,n\}$ and every point $y\in U\cap Y^\an$
such that $q_j(y)\in \mathscr L_{\Gamma_j}$,
one has $q_1(y)\in\mathscr L_{\Gamma_1}$.
\end{enumerate}
\end{lemm}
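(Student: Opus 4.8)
The plan is to exploit the failure of $Y^{\an}$ to lie inside $\Omega$, which was established just before the statement: since no projection $q_j$ is constant on $Y$, each $q_j|_Y$ is surjective and hence $Y^{\an}$ meets $q_j^{-1}(\mathscr L_{\Gamma_j})$ for every $j$. First I would pick, for each $j$, a point of $Y^{\an}$ whose $j$th coordinate lies in $\mathscr L_{\Gamma_j}$, and then choose the index so as to make condition (3) hold. More precisely, consider the finite collection of Zariski-closed subsets $Y_j := Y \cap q_j^{-1}(\mathscr L_{\Gamma_j})$ of $Y^{\an}$ (or rather the analytic loci cut out by the limit sets); condition (3) asks that, after reordering, the analytic subset where $q_1$ hits $\mathscr L_{\Gamma_1}$ contains all the others near $\xi$. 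I would obtain this by choosing $\xi$ to be a suitably generic point of a minimal member, under inclusion of germs at a point, among the $Y_j$: pick $j_0$ and a point $\xi \in Y_{j_0}^{\an}$ such that no $Y_k$ has strictly smaller dimension through $\xi$, and relabel $j_0$ as $1$. One must check that near such a $\xi$ every $Y_k$ passing through it actually lies in $Y_1$; this uses that the $\mathscr L_{\Gamma_j}$ are perfect (no isolated points, by the rank $\geq 2$ hypothesis recalled earlier) together with the irreducibility of $Y$, so that the relevant local branches cannot be accidental.

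Second, having fixed $\xi$ with $q_1(\xi) \in \mathscr L_{\Gamma_1}$, I would arrange the smoothness and the finite étale condition (2). Since smooth points are dense in $Y^{\an}$ and the non-smooth locus is Zariski-closed of smaller dimension, and similarly the locus where the generic finiteness of a coordinate projection $q_J$ (for a well-chosen $J$ of size $m = \dim Y$) fails is Zariski-closed and proper, one can perturb $\xi$ within its germ to a smooth rigid point at which some $q_J$ with $\lvert J\rvert = m$ is finite étale onto its image. The subtlety is that I must keep $q_1(\xi) \in \mathscr L_{\Gamma_1}$ while perturbing; this forces $\xi$ to move inside $Y_1$, so the argument must be run with $Y_1$ in place of $Y$: one wants the generic point of $Y_1$ (which has dimension $\geq m-1$ since $\mathscr L_{\Gamma_1}$ is ``small'' — here one uses that $\mathscr L_{\Gamma_1} \subset \P_1(F)$ has empty interior, hence $q_1^{-1}(\mathscr L_{\Gamma_1})$ is nowhere dense) to be smooth on $Y$ and to have a coordinate projection finite étale. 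I would also need $1 \in J$, or rather choose $J = \{1,\dots,m\}$ after reordering so that the first $m$ coordinates (including the distinguished one) give a finite projection; this is possible because the coordinate projections that are generically finite on $Y$ are characterised by a transcendence-degree condition and one can always include any single prescribed coordinate if that coordinate is non-constant on $Y$ — which it is, by the reduction already made.

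Third, I would take $U$ to be a small connected open neighbourhood of $\xi$ in $(\P_1^n)^{\an}$, small enough that $U \cap Y^{\an}$ is connected (possible since $Y^{\an}$ is topologically a manifold-like space near a smooth rigid point, and in the Berkovich setting small enough neighbourhoods of a smooth point are connected), that $q_J$ restricted to $U \cap Y^{\an}$ is finite étale onto its image (étaleness is an open condition, finiteness can be arranged by shrinking), and that (3) holds on all of $U$ — here I would use that the germs at $\xi$ of the $Y_k$ that pass through $\xi$ are contained in the germ of $Y_1$, together with closedness of the $q_k^{-1}(\mathscr L_{\Gamma_k})$, to find a neighbourhood on which no point of $U \cap Y^{\an}$ outside $q_1^{-1}(\mathscr L_{\Gamma_1})$ meets any $\mathscr L_{\Gamma_k}$.

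The main obstacle, as I see it, is step one: producing a point $\xi$ for which the implication in (3) is genuinely available, i.e.\ ensuring that the ``first coordinate hits its limit set'' locus dominates all the other ``$j$th coordinate hits its limit set'' loci near $\xi$. The natural device is a pigeonhole/minimality argument over the finitely many analytic loci $q_j^{-1}(\mathscr L_{\Gamma_j}) \cap Y^{\an}$, ordered by containment of local germs, combined with the facts recalled earlier — the limit sets are perfect and nowhere dense in $\P_1(F)$, $Y$ is geometrically irreducible (Corollary~\ref{coro.irr-comp-2}), and no $q_j$ is constant on $Y$. Once $\xi$ is found, conditions (2) and (3) are comparatively soft, following from density of smooth points, openness of étaleness, and local shrinking of $U$; properties of $\Omega$ as the complement of a sparse set (so that each $q_j^{-1}(\mathscr L_{\Gamma_j})$ is closed with empty interior) make the bookkeeping go through.
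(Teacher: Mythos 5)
Your steps (2) and (3) --- perturbing to a smooth rigid point where some coordinate projection $q_J$ with $1\in J$, $\lvert J\rvert=m$ is finite \'etale, then shrinking $U$ --- match the paper's argument in substance. But your step one, which you correctly identify as the main obstacle, has a genuine gap, and the device you propose to fill it does not work.

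You want to choose $\xi$ on a locus $Y_{j_0}=Y^{\an}\cap q_{j_0}^{-1}(\mathscr L_{\Gamma_{j_0}})$ that is ``minimal under inclusion of germs'', selected by the criterion that no $Y_k$ has strictly smaller dimension through $\xi$, and then to deduce that every $Y_k$ through $\xi$ is locally contained in $Y_{j_0}$. Equality of local dimensions does not give nestedness: if $Y$ is a surface and $Y_1$, $Y_2$ are two one-dimensional loci crossing at $\xi$ without either containing the other near $\xi$, your criterion is satisfied at $\xi$ yet condition (3) fails on every neighbourhood of $\xi$. Moreover the $Y_j$ are not algebraic subvarieties (the limit sets are perfect compact subsets of $\P_1(F)$, not Zariski-closed sets), so there is no generic-point or branch theory to appeal to, and the phrase ``the relevant local branches cannot be accidental'' has no argument behind it. Irreducibility of $Y$ and perfectness of the limit sets do not, by themselves, force containment of germs.

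The paper's mechanism is different and is the key idea you are missing. It defines, for each open $V\subset Y^{\an}$, a preorder $i\preceq_V j$ meaning ``$q_i(y)\in\mathscr L_{\Gamma_i}\Rightarrow q_j(y)\in\mathscr L_{\Gamma_j}$ for all $y\in V$'', and builds a decreasing chain of non-empty open sets $V_k$ together with distinguished indices $j_k$ such that $1,\dots,k\preceq_{V_k}j_k$. When the next index $k+1$ fails to be dominated by $j_k$ on $V_k$, there is a witness $y\in V_k$ with $q_{k+1}(y)\in\mathscr L_{\Gamma_{k+1}}$ but $q_{j_k}(y)\notin\mathscr L_{\Gamma_{j_k}}$; one then \emph{shrinks} to the open set $V_{k+1}=V_k\cap q_{j_k}^{-1}(\Omega_{\Gamma_{j_k}})$, on which the old distinguished coordinate never meets its limit set, so that $j_k\preceq_{V_{k+1}}\ell$ holds vacuously for every $\ell$; transitivity then transports all previously established dominations onto the new distinguished index $j_{k+1}=k+1$, and the witness $y$ guarantees $V_{k+1}$ is non-empty with $q_{k+1}(V_{k+1})$ still meeting $\mathscr L_{\Gamma_{k+1}}$. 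After $n$ steps one has an open set on which a single coordinate dominates all others, which is condition (3); no containment of germs between the $Y_j$ is ever asserted. In the transversal-crossing example above, this procedure would discard $Y_1$ entirely by restricting to the complement of $q_1^{-1}(\mathscr L_{\Gamma_1})$ and promote the index $2$, rather than trying to sit at the crossing point. Without this (or an equivalent) combinatorial step, your proof does not go through.
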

\begin{proof}
For every subset~$V$ of~$Y^\an$,
let us define a relation~$\preceq_V$ on~$\{1,\dots,n\}$
as follows: $i\preceq_V j$ if and only if, for every $y\in V$
such that
$q_i(y)\in\mathscr L_{\Gamma_i}$, one has $q_j(y)\in\mathscr L_{\Gamma_j}$.
This is a preordering relation. 
If $U\subset V\subset Y^\an$ and $i\preceq_V j$, then
$i\preceq_U j$.

We will define a decreasing sequence $(V_0,V_1,\dots,V_n)$
of non-empty open subsets of~$Y^\an$
and a sequence $(j_0,j_1,\ldots, j_n)$ 
of elements of~$\{1,\dots,n\}$,
such that 
for every~$k$,
$q_{j_k}(V_k)$ meets~$\mathscr L_{\Gamma_{j_k}}$ and
$1,\dots,k \preceq_{V_k} j_k$.

We start with $V_0=Y^\an$. We have reduced ourselves to the
case where  $q_j(Y^\an)=\P_1$ for every~$j$; in particular,
$q_j(Y^\an)$ meets $\mathscr L_{\Gamma_{j}}$; we may take $j_0=1$.

Let $k\geq0$ be such 
that $V_0,V_1,\dots,V_k$ and $j_0,j_1,\dots,j_{k}$ are defined.
If $k+1\preceq_{V_k} j_k$, we set $V_{k+1}=V_k$ and $j_{k+1}=j_k$.
Otherwise, one has $k+1\not\preceq_{V_k} j_k$,
hence there exists $y\in V_k$ such that
$q_{k+1}(y)\in\mathscr L_{\Gamma_{k+1}}$
and $q_{j_k}(y)\not\in\mathscr L_{\Gamma_{j_k}}$.
Let $V_{k+1}=V_k \cap (q_{j_k})^{-1}( \Omega_{\Gamma_{j_k}})$;
this is an open neighborhood of~$y$ in~$V_k$
such that $q_{j_{k+1}}(V_{k+1})$ meets $\mathscr L_{\Gamma_{j_{k+1}}}$.
By construction, no element~$z$ of~$V_{k+1}$
satisfies $q_{j_k}(z)\in\mathscr L_{\Gamma_{j_k}}$,
so that $j_k \preceq_{V_{k+1}} k+1$.
We then set $j_{k+1}=k+1$.

Let $V=V_n$ and $i=j_n$;
let $y\in V$ such that $q_i(y)\in\mathscr L_{\Gamma_i}$.
Let $Z$ be the dense open subscheme of~$Y$ consisting of smooth points
at which $dq_i$ does not vanish.
Then $Z^\an$ is open and dense in~$Y^\an$, 
and $V\cap Z^\an$ is open and dense in~$V$,
hence $q_i(V\cap Z^\an)$ is dense in~$q_i(V)$.
Since $\mathscr L_{\Gamma_i}$ has no isolated points,
we may assume that $y\in Z^\an$.
Rigid points are dense in~$q_i^{-1}(q_i(y))\cap V\cap Z^\an$;
there exists a rigid point~$\xi$ in $(q_i)^{-1}(q_i(y))\cap V\cap Z^\an$.
Since $q_i(y)$ is a rigid point, the point~$\xi$ is a rigid point
of~$V\cap Z^\an$ (and not only of its fiber of~$q_i$).
Moreover, $q_i(\xi)=q_i(y)\in\mathscr L_{\Gamma_i}$.

Since $dq_i$ does not vanish at~$\xi$, 
there exists a subset~$J$ 
of~$\{1,\dots,n\}$ containing~$i$ such that
the projection~$q_J$ from~$V$ to~$(\P_1^J)^\an$
is finite étale  at~$\xi$. One has $\Card(J)=\dim(V)=m$.
Consequently, there exists an open neighborhood~$U$ of~$\xi$ in~$(\P_1^n)^\an$
such that $q_J$ induces a finite étale morphism from~$U\cap Y^\an$
to its image in $(\P_1^J)^\an$.

Reordering the coordinates, we may assume that $i=1$ and $J=\{1,\dots,m\}$,
hence the lemma.
\end{proof}

\subsection{}\label{ss.section2}
Choose~$\xi$, $J=\{1,\dots,m\}$ and~$U$ as in the previous lemma; 
we may even assume
that $U$ is of the form $U_1\times\dots \times U_n$,
where, for each~$i$, $U_i$ is an open neighborhood of~$q_i(\xi)$
in~$\P_1^\an$. 

Let $F'$ be a finite extension of~$F$ such that $\xi\in Y(F')$.
Since $W$ is geometrically irreducible, 
$W_{F'}$ is an irreducible algebraic subvariety of~$\Omega$.
It is also maximal. 
Note that the flatness of~$W_{F'}$ implies the flatness of~$W$.
Replacing~$F$ by~$F'$, we thus may assume that $\xi\in Y(F)$;
then $q_J$ induces a local isomorphism at~$\xi$.

Let $\phi=(\phi_1,\dots,\phi_n)\colon  O \to Y^\an\cap U$ 
be an analytic section of~$q_J|_{Y^\an\cap U}$, 
defined on an open neighborhood~$O$ of~$q_J(\xi)$;
we may assume that $O=U_1\times\dots\times U_m$.

By condition~(3) of lemma~\ref{lemm.projection}
one has 
$q_1(\phi_j^{-1}(\mathscr L_{\Gamma_j}))\subset \mathscr L_{\Gamma_1}$
for every~$j\in\{1,\dots,n\}$.

\subsection{}\label{sec.R}
Let $G$ be the $\Q$-algebraic group $\PGL(2)^n$,
and 
let $G_0$ be the algebraic subgroup of~$G$ 
defined by
\begin{equation}
 (g_1,\dots,g_n)\in G_0 \quad\Leftrightarrow\quad g_2=\dots=g_m=1.
\end{equation}
We denote by $q_1,\dots,q_n$ the projections of~$G$ to~$\PGL(2)$.
For every compact analytic domain~$\mathfrak F$ of~$\Omega$,
we define a subset $R_{\mathfrak F}$ of~$G_0(F)$ by 
\begin{equation}
g\in R_{\mathfrak F} \quad\Leftrightarrow\quad
   \dim(g \cdot Y^\an\cap \mathfrak F \cap p^{-1}(V^\an))=m .
\end{equation}

\begin{lemm}\label{lemm.R}
Let $\mathfrak F$ be an affinoid domain of~$\Omega$.
\begin{enumerate}
\item
The set $R_{\mathfrak F}$ is an $F$-subanalytic subset of $G_0(F)$.
\item
For every $g\in R_{\mathfrak F}$, 
one has $(g\cdot Y^\an) \cap\Omega\subset p^{-1}(V^\an)$.
\item
Let $M\subset R_{\mathfrak F}$ be a subset whose Zariski closure is irreducible;
for every $g,h\in M$, one has $g\cdot Y=h\cdot Y$.
\end{enumerate}
\end{lemm}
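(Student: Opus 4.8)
The plan is to treat the three assertions in order, each building on the previous one. For part~(1), I would express the condition ``$\dim(g\cdot Y^\an\cap\mathfrak F\cap p^{-1}(V^\an))=m$'' in a first-order way over the $p$-adic field. The analytic set $\mathfrak F$ is affinoid, $p^{-1}(V^\an)$ is an analytic subset of $\Omega$, and $g$ ranges over $G_0(F)$, which is semi-algebraic. Since $Y$ is algebraic, the family $\{g\cdot Y^\an\}_{g\in G_0}$ is an algebraic family, so its intersection with the affinoid $\mathfrak F$ and with the analytic subset $p^{-1}(V^\an)\cap\mathfrak F$ is cut out by finitely many analytic (in fact, polynomial and convergent power series) equations whose coefficients depend semi-algebraically on~$g$. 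The dimension of a fibre of a subanalytic family being~$m$ is a subanalytic condition on the parameter (this follows from the cell decomposition / b-minimality recalled in~\S\ref{ss.subanalytic}); hence $R_{\mathfrak F}$ is $F$-subanalytic. I would be careful to work on the affine charts of $(\P_1)^n$ meeting $\mathfrak F$ and use that $\mathfrak F$ is quasi-compact so that finitely many charts suffice.

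For part~(2), fix $g\in R_{\mathfrak F}$. By definition $g\cdot Y^\an\cap\mathfrak F\cap p^{-1}(V^\an)$ has dimension~$m=\dim(Y)$, hence it contains an irreducible component, say $Z$, of $g\cdot Y^\an\cap\mathfrak F$ of dimension~$m$; since $g\cdot Y$ is irreducible of dimension~$m$, such a $Z$ is Zariski-dense in $g\cdot Y$. Now $Z\subset p^{-1}(V^\an)$, and $p^{-1}(V^\an)$ is an analytic subset of~$\Omega$ which is locally defined by analytic equations; the Zariski closure of $Z$ (which equals $g\cdot Y$) must then satisfy the \emph{algebraic} equations among these, but more precisely I would argue as follows. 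The set of points of $(g\cdot Y^\an)\cap\Omega$ lying in $p^{-1}(V^\an)$ is a closed analytic subset of the irreducible analytic space $(g\cdot Y^\an)\cap\Omega$; by Proposition~\ref{lemm.irred} this ambient space is (geometrically) irreducible since $g\cdot Y$ is an irreducible subvariety and $\Omega$ is the complement of the sparse set $\prod q_i^{-1}(\mathscr L_{\Gamma_i})$ (sparseness of this product being part of the setup, cf.\ lemma~\ref{lemm.curve-sparse} and the remark that $\Omega_{\Gamma_i}$ is geometrically irreducible). A closed analytic subset of an irreducible analytic space that has full dimension~$m$ must be the whole space; therefore $(g\cdot Y^\an)\cap\Omega\subset p^{-1}(V^\an)$.

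For part~(3), let $M\subset R_{\mathfrak F}$ have irreducible Zariski closure $N\subset G_0$. Consider the incidence variety $\mathcal Y=\{(n,y)\in N\times\P_1^n\sozat y\in n\cdot Y\}$, with its two projections to $N$ and to $\P_1^n$; the image of the second projection is a constructible, irreducible subset of $\P_1^n$ whose closure I call $Y'$. For $g\in M$, part~(2) gives $(g\cdot Y^\an)\cap\Omega\subset p^{-1}(V^\an)$, hence the irreducible algebraic subvariety $g\cdot Y$ (being the Zariski closure of an irreducible component of $(g\cdot Y^\an)\cap\Omega$, using again Proposition~\ref{lemm.irred}) is contained in $p^{-1}(V^\an)$ after intersecting with $\Omega$; therefore an irreducible component of $Y'^\an\cap\Omega$ containing $W$ lies in $p^{-1}(V^\an)$. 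Since $Y'\supset g\cdot Y$ for every $g\in M$, in particular $Y'\supset h_0\cdot Y$ for some fixed $h_0\in M$; but $\dim(h_0\cdot Y)=m$, and if $M$ contained two elements $g,h$ with $g\cdot Y\neq h\cdot Y$ then $Y'$ would have dimension $>m$, producing an irreducible algebraic subvariety of $\Omega$ strictly containing (a translate of) $W$ and still contained in $p^{-1}(V^\an)$ --- contradicting the maximality of~$W$ after transporting back by an element of $G_0(F)$. Hence $g\cdot Y=h\cdot Y$ for all $g,h\in M$.

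The main obstacle I anticipate is in part~(3): one must invoke maximality of $W$ correctly, and the subtlety is that $W$ is maximal among \emph{irreducible algebraic} subvarieties contained in $p^{-1}(V^\an)$, whereas the translates $g\cdot Y$ and their ``union'' $Y'$ a priori live over a possibly larger field and need not obviously be of the form (Zariski closure of a component of $Y''^\an\cap\Omega$) for an irreducible $Y''$. Handling this requires passing to irreducible components over $\C_p$, applying Corollary~\ref{coro.irr-comp-2} to recover geometric irreducibility, and checking that the resulting larger subvariety is still $\subset p^{-1}(V^\an)$; this is exactly the bookkeeping already carried out in the reduction of Theorem~\ref{mt} to Proposition~\ref{mt-bis}, so I would follow that template closely.
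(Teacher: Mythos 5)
Your parts (1) and (2) are essentially in order. Part (1) follows the paper's route (the triple intersection is a definable family over $G_0$, and the dimension condition is definable by b-minimality); you should just be explicit that one first obtains a rigid $F$-subanalytic subset of $G_0(\C_p)$ and then restricts to $F$-points via lemma~\ref{lemm.remark}. Part (2) is a genuinely different and shorter argument: since $g\cdot Y^\an\cap\mathfrak F\cap p^{-1}(V^\an)$ is an honest closed analytic subset of the affinoid~$\mathfrak F$, full dimension plus the irreducibility of $(g\cdot Y^\an)\cap\Omega$ (proposition~\ref{lemm.irred}, using that no coordinate projection is constant on~$Y$ so that the complement is sparse in $(g\cdot Y)^\an$) forces the containment. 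The paper instead passes to $F'$-rational points and invokes proposition~\ref{prop.closure-analytic}; your version trades that input for the identity principle ``a closed analytic subset of full dimension of an irreducible analytic space is the whole space'', which the paper itself uses elsewhere, so this is an acceptable alternative.

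The genuine gap is in part (3), at the word ``therefore''. From $(g\cdot Y^\an)\cap\Omega\subset p^{-1}(V^\an)$ for every $g\in M$ you deduce that a component of $(Y')^\an\cap\Omega$ lies in $p^{-1}(V^\an)$, where $Y'$ is the Zariski closure of $M\cdot Y$. But $\bigcup_{g\in M}(g\cdot Y^\an)\cap\Omega$ is \emph{not} a closed analytic subset of $(Y')^\an\cap\Omega$ --- $M$ is only a subanalytic piece of a block --- so the identity-principle argument of your part (2) is unavailable, and mere algebraic Zariski density of a subset inside the analytic (non-algebraic) set $p^{-1}(V^\an)$ does not propagate to its Zariski closure: the $\Gamma$-orbit of a single point of $p^{-1}(V^\an)$ is contained in $p^{-1}(V^\an)$ and is typically Zariski dense in $\P_1^n$, yet $p^{-1}(V^\an)\neq\Omega$. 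This passage from a definable Zariski-dense subset of an analytic set to the analytification of its Zariski closure is exactly the content of proposition~\ref{prop.closure-analytic} (the Pila--Tsimerman-style lemma), and it is precisely here that the paper invokes it, applied to $(M\cdot Y^\an)\cap\Omega$, to get $(Y')^\an\cap\Omega\subset p^{-1}(V^\an)$ before appealing to the maximality of~$W$. Without that input your contradiction with maximality does not get off the ground. The subtlety you flag at the end (fields of definition and components over~$\C_p$) is comparatively minor here, since $Y$ and the Zariski closure of~$M$ are geometrically irreducible, so $Y'$ is as well.
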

\begin{proof}
\begin{enumerate}
\item
The sets~$V$ and~$Y$ are algebraic over~$F$, hence 
$V(\C_p)$ and $Y(\C_p)$ are rigid $F$-subanalytic.
Since $\mathfrak F$ is affinoid, the morphism $p|_{\mathfrak F}$
defines a rigid $F$-subanalytic map from~$\mathfrak F(\C_p)$
to $V(\C_p)$, 
so that $(\mathfrak F\cap p^{-1}(V^\an))(\C_p)$ 
is a rigid $F$-subanalytic set.
Consequently, 
taking $\C_p$-points,
$(g\cdot Y^\an\cap\mathfrak F\cap p^{-1}(V^\an))_g$ furnishes
a rigid $F$-subanalytic
family of rigid $F$-subanalytic subsets of~$\Omega(\C_p)$, 
parameterized by~$G_0(\C_p)$.
By b-minimality, the set of points $g\in G_0(\C_p)$
such that 
   $\dim(g\cdot Y^\an\cap \mathfrak F \cap p^{-1}(V^\an))=m $ is a rigid
$F$-subanalytic subset of $G_0(\C_p)$.
It then follows from lemma~\ref{lemm.remark}
that $R_{\mathfrak F}$ is an $F$-subanalytic subset of $G_0(F)$.
\item
Let $g\in R_{\mathfrak F}$
and let us prove that $(g\cdot Y^\an)\cap\Omega \subset p^{-1}(V^\an)$.
Since $g\cdot Y^\an$ is irreducible
and $g\cdot Y^\an\cap \mathfrak F$ has dimension~$m=\dim(g\cdot Y^\an)$,
this intersection is Zariski dense in $g\cdot Y^\an$.
Moreover, there exists a finite extension $F'$ of~$F$ 
such that $g\cdot Y^\an_{F'}\cap \mathfrak F(F')$ is Zariski dense in~$Y_{F'}$
(it suffices that $g\cdot Y^\an\cap \mathfrak F$ admits a smooth $F'$-point),
so that the Zariski closure of $g\cdot Y^\an\cap \mathfrak F(F')$ 
in~$(\P_1^n)_{F'}$ 
is equal to~$g\cdot Y_{F'}$.
Moreover, $g\cdot Y(F')\cap \mathfrak F(F')$ is $F'$-semialgebraic, 
hence proposition~\ref{prop.closure-analytic}
implies that 
$g\cdot Y_{F'}^\an\cap\Omega_{F'}\subset p_{F'}^{-1}(V_{F'}^\an)$.
Since $p$ is defined over~$F$ and $g\in G(F)$, 
this implies that $(g\cdot Y^\an)\cap\Omega\subset p^{-1}(V^\an)$. 
\item
As a subset, $(M\cdot Y^\an)\cap\Omega$ is contained in $p^{-1}(V^\an)$.
By proposition~\ref{prop.closure-analytic},
its Zariski closure~$Y'$ satisfies
$(Y')^\an\cap\Omega \subset  p^{-1}(V^\an)$ as well.
Since $Y$ and the Zariski closure of~$M$ are
geometrically irreducible, $Y'$ is geometrically irreducible.

Let $g\in M$; then $Y^\an\subset g^{-1}M\cdot Y^\an\subset g^{-1}\cdot (Y')^\an$, hence $W\subset g^{-1}\cdot (Y')^\an\cap\Omega$.
By maximality of~$W$, one has $W=g^{-1}\cdot (Y')^\an\cap\Omega$.
This implies $g\cdot Y=Y'$, thus $g\cdot Y=h\cdot Y$ for every $g,h\in M$.
\qedhere
\end{enumerate}
\end{proof}

We return to the context of \S\ref{ss.section2}. In particular,
$\xi$ is a point of $Y(F)$ such that $q_1(\xi)\in\mathscr L_{\Gamma_1}$,
and the restriction to~$Y$ of the projection to the first~$m$ coordinates 
is étale at~$\xi$, with a local analytic section~$\phi$ defined
on $U_1\times\dots\times U_m$.

\begin{lemm}\label{lemm.R-many}
There exist a real number $c>0$,
fundamental sets $\mathfrak F_i\subset \Omega_{\Gamma_i}$
and a subset $\Upsilon$ of $R_{\mathfrak F}\cap\Gamma$,
where $\mathfrak F=\prod\mathfrak F_i$, such that
\begin{enumerate}
\item For all $T$ large enough, one has $\Card(\Upsilon_T)\geq T^c$,
where $\Upsilon_T$ denotes the set of all $\gamma\in\Upsilon$
such that $H(\gamma)\leq T$;
\item The projection $q_1$ is injective on $\Upsilon$;
\item For all~$j\in\{1,\dots,n\}$
such that $q_j(\xi)\not\in\mathscr L_{\Gamma_j}$,
one has $\Card(q_j(\Upsilon))= 1$.
\end{enumerate}
\end{lemm}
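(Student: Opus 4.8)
The goal is to produce many elements of $\Gamma = \prod_i \Gamma_i$ lying in $R_{\mathfrak F}$, with controlled height, which are injective on the first coordinate and constant on the "interior" coordinates. The idea is to exploit the fact that $q_1(\xi)\in\mathscr L_{\Gamma_1}$: near a limit point the group $\Gamma_1$ acts with a very concrete combinatorial structure (the good fundamental domain of \S\ref{ss.fund-domain}), so one can produce exponentially many words of bounded length whose translates land in a prescribed neighborhood of $\xi$. Composing with the analytic section $\phi$ transports this to points of $Y^\an$, and one checks that the resulting group elements belong to $R_{\mathfrak F}$.

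\medskip

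\emph{Step 1: set up fundamental sets.} Apply proposition~\ref{prop.fundamental} to each $\Gamma_i$ and to the boundary point $q_i(\xi)$ (which lies in $\mathscr L_{\Gamma_i}$ when $q_i(\xi)\in\mathscr L_{\Gamma_i}$; when it does not, one only needs $\Gamma_i$ to act on $\Omega_{\Gamma_i}$ and can take a suitable fundamental set not tied to a boundary point, or simply use a compact domain containing $q_i(\xi)$). This yields open neighborhoods $U_i'\subset U_i$ stable under a basis of $\Gamma_i$, together with fundamental sets (affinoid domains) $\mathfrak F_i\subset\Omega_{\Gamma_i}$ with $\mathfrak F_i\subset U_i'$ and $\bigcup_{\gamma\in\Gamma_i}\gamma\mathfrak F_i=\Omega_{\Gamma_i}$. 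Set $\mathfrak F=\prod_i\mathfrak F_i$, an affinoid domain of $\Omega$, and shrink $U=\prod U_i$ so that $U_i\subseteq U_i'$ for $i\ge 2$ while keeping all the properties of lemma~\ref{lemm.projection}; in particular keep the analytic section $\phi\colon U_1\times\dots\times U_m\to Y^\an\cap U$.

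\medskip

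\emph{Step 2: produce the words.} The point $q_1(\xi)\in\mathscr L_{\Gamma_1}$ lies in a nested sequence of disks $B(\gamma)$, with $\ell_{\Gamma_1}(\gamma)=\operatorname{length}$, and property~(9)--(10) of \S\ref{ss.fund-domain} gives that each $B(\gamma)$ of large length is small and contained in $U_1'$. Fixing one such $\gamma_0$ with $\gamma_0\cdot\mathfrak F_1\subset U_1$, the elements $\delta=\gamma_0 w$ where $w$ ranges over reduced words in the chosen basis all satisfy $\delta\cdot\mathfrak F_1\subset\gamma_0\cdot(\text{affinoid stable under the conjugated basis})\subset B(\gamma_0)^+\subset U_1$; and since $\Gamma_1$ has rank $\ge 2$, there are $\gg\kappa^{\ell}$ such words of length $\le\ell$ for some $\kappa>1$. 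By lemma~\ref{lemm.heights-length}, $H(\delta)\le c^{\ell_{\Gamma_1}(\delta)+1}$, so among those with $H(\delta)\le T$ there are still $\ge T^c$ for a suitable $c>0$; this gives (1). Distinct reduced words give distinct group elements, and one arranges (shrinking the word set, using the tree structure of the free group) that $q_1=$ the first-coordinate map is injective on the family — essentially automatic since the $\delta$'s are already pairwise distinct as elements of $\Gamma_1\subset\PGL(2,F)$, giving (2). For $j\ge 2$ with $q_j(\xi)\notin\mathscr L_{\Gamma_j}$, take the $j$-th component of the group element to be the identity of $\Gamma_j$ (this is where the structure $G_0$, with $g_2=\dots=g_m=1$, is used); for $j>m$ one still needs the $j$-th component to be controlled, which one handles by the same kind of word construction in $\Gamma_j$ whenever $q_j(\xi)\in\mathscr L_{\Gamma_j}$ and by a constant (identity) choice otherwise, giving (3).

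\medskip

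\emph{Step 3: membership in $R_{\mathfrak F}$.} For $\gamma$ built as above, one checks $\gamma\in R_{\mathfrak F}$, i.e.\ $\dim(\gamma\cdot Y^\an\cap\mathfrak F\cap p^{-1}(V^\an))=m$. Since $W=Y^\an\cap\Omega\subset p^{-1}(V^\an)$ and $p$ is $\Gamma$-invariant, $\gamma\cdot W=\gamma\cdot Y^\an\cap\Omega\subset p^{-1}(V^\an)$ as well. So it suffices that $\gamma\cdot Y^\an$ meets $\mathfrak F$ in dimension $m$. Here one uses that $\gamma$ was chosen so that $\gamma\cdot(\text{a neighborhood of }\xi\text{ inside }Y^\an)$ — more precisely the image under $\gamma$ of $\phi(U_1\times\dots\times U_m)$, or of a suitably small polydisk around $q_J(\xi)$ pushed forward — lands inside $U\subset\Omega$, and then pulling back by the fundamental-set covering property $\bigcup_{g\in\Gamma}g\mathfrak F=\Omega$ combined with local finiteness (corollary~\ref{coro.proper}) one finds that some $\Gamma$-translate of $\mathfrak F$ meets this $m$-dimensional analytic piece in dimension $m$; absorbing that translate into $\gamma$ (it changes $\gamma$ only by a bounded-height factor, preserving (1)) yields $\gamma\in R_{\mathfrak F}$. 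Then $\Upsilon$ is the resulting set of $\gamma$'s, and $\Upsilon\subset R_{\mathfrak F}\cap\Gamma$.

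\medskip

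\emph{Main obstacle.} The delicate point is the simultaneous bookkeeping: one must build the word in $\Gamma_1$ long enough that its translate of the relevant neighborhood of $\xi$ is forced into $U_1$ (this needs the shrinking-disk estimates (9)--(10)), while at the same time keeping the height polynomially bounded in the count (lemma~\ref{lemm.heights-length}), keeping $q_1$ injective, and making sure the chosen word for $\Gamma_1$ is compatible with constant-or-controlled choices in the other coordinates so that the global element lies in $G_0$ and in $R_{\mathfrak F}$. The count $\ge T^c$ is robust because the free group of rank $\ge 2$ has exponential word growth, so the real work is purely the coordinate-by-coordinate matching and the verification that the $m$-dimensional intersection with $\mathfrak F$ genuinely survives — which is guaranteed by the covering and proper-discontinuity properties of the fundamental sets.
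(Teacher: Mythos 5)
Your overall architecture matches the paper's: fix fundamental sets via proposition~\ref{prop.fundamental}, generate exponentially many words in $\Gamma_1$ near the boundary point $q_1(\xi)$, transport them through the section $\phi$, and verify membership in $R_{\mathfrak F}$ by exhibiting an $m$-dimensional piece of $\gamma\cdot Y^\an$ inside $\mathfrak F$. But there is a genuine gap at the heart of the argument, in your treatment of the coordinates $j>m$ with $q_j(\xi)\in\mathscr L_{\Gamma_j}$. For such $j$ the component $\gamma_j$ is not free to be "handled by the same kind of word construction in $\Gamma_j$": it is \emph{forced} by the requirement that $\phi_j(\gamma_1\cdot a_1,a_2,\dots,a_m)$ lie in $\gamma_j\cdot\mathfrak F_j$, and in Step~3 you assert that absorbing this translate "changes $\gamma$ only by a bounded-height factor, preserving (1)". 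That assertion is false: as $\ell_{\Gamma_1}(\gamma_1)\to\infty$, the point $\phi_j(a(\gamma_1))$ approaches $\mathscr L_{\Gamma_j}$ (since $q_j(\xi)\in\mathscr L_{\Gamma_j}$), so the word length of the forced $\gamma_j$ is unbounded. What the lemma needs, and what your proposal never establishes, is that $H(\gamma_j)$ grows at most like $c^{\ell_{\Gamma_1}(\gamma_1)}$, i.e.\ polynomially in $H(\gamma_1)$; otherwise the count $\Card(\Upsilon_T)\geq T^c$ in part~(1) collapses. The paper obtains this by the chain: the Lipschitz estimate $\delta(\gamma_1\cdot a_1;\mathscr L_{\Gamma_1})\geq c_1^{-\ell_{\Gamma_1}(\gamma_1)}$; the \L ojasiewicz-type inequality of lemma~\ref{lemm.distance} applied to $\psi_j=\phi_j(\cdot,a_2,\dots,a_m)$, whose hypothesis $\psi_j^{-1}(\mathscr L_{\Gamma_j})\subset\mathscr L_{\Gamma_1}$ is exactly condition~(3) of lemma~\ref{lemm.projection}; proposition~\ref{prop.proper} converting a lower bound on $\delta_{\Gamma_j}(\phi_j(a(\gamma_1)))$ into an upper bound on $\ell_{\Gamma_j}(\gamma_j)$; and lemma~\ref{lemm.heights-length}. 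None of these quantitative steps appear in your proposal, and part~(1) is unproven without them.

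A secondary issue: you let $w$ range over \emph{reduced} words and claim $\delta\cdot\mathfrak F_1$ stays in an affinoid stable under the basis. The stability furnished by \S\ref{ss.fund-domain} and proposition~\ref{prop.fundamental} is one-sided ($\gamma_i(U')\subset U'$, not $\gamma_i^{-1}(U')\subset U'$), so reduced words containing inverse letters can leave $U'_1$; this is why the paper restricts to \emph{positive} words in the basis (still $q^{\ell}$ of them, so the exponential count survives). Finally, for $j$ with $q_j(\xi)\notin\mathscr L_{\Gamma_j}$ the correct conclusion is not that one may take $\gamma_j=\id$, but that the forced $\gamma_j$ ranges over a finite set (corollary~\ref{coro.proper}), after which a pigeonhole extraction yields $\Card(q_j(\Upsilon))=1$.
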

We recall that there exists a number
field~$K$ contained in~$F$ such that $\Gamma\subset\PGL(2,K)^n$,
and $H$ is induced by a fixed height function on $\PGL(2,\overline\Q)^n$.
In particular, lemma~\ref{lemm.R-many} implies
that the subset~$R_{\mathfrak F}$ of~$\PGL(2,F)^n$
has many $K$-rational points, in the sense of section~\ref{subsec.many}.
\begin{proof}
Let $q$ be the genus of~$X_{\Gamma_1}$;
by proposition~\ref{prop.fundamental}, 
there exists a basis $\alpha_1,\dots,\alpha_q$ of~$\Gamma_1$, 
an open neighborhood~$U'_1$ of~$q_1(\xi)$, 
which is contained in~$U_1$ 
and stable under the action of~$\alpha_1,\dots,\alpha_q$,
and a fundamental set~$\mathfrak F_1$ for~$\Gamma_1$
contained in~$U'_1$. 
For simplicity of notation, we now assume that $U_1=U'_1$.

We have introduced in \S\ref{ss.section2}
a local analytic section 
$\phi=(\phi_1,\dots,\phi_n)\colon U_1\times\cdots\times U_m
\to Y^\an \cap U_1\times\dots\times U_n$ of the projection
$q_J\colon Y \to \P_1^J$, where $J=\{1,\dots,m\}$.
Let $j\in\{1,\dots,n\}$ be such that 
$q_j(\xi)\not\in\mathscr L_{\Gamma_j}$.
Then $q_j(\xi)$
has a compact analytic neighborhood~$U'_j$ contained in~$\Omega_{\Gamma_j}$.
Shrinking $U_1,\dots,U_m$ if necessary, 
we assume that the image of~$\phi_j$ is contained in~$U'_j$
for every such~$j$.

Let $a'=(a_1,\dots,a_n)\in W$ 
be a rigid point that belongs to the image of~$\phi$ and such that $a_1\in\mathfrak F_1$.
Let $a=(a_1,\dots,a_m)$; we have $a'=\phi(a)$.
For $j\in\{2,\dots,n\}$, we also choose a fundamental set~$\mathfrak F_j$
that contains~$a_j$.

We claim that we can complete any element~$\gamma_1\in F_1$ 
which is a positive word~$\gamma_1$ in~$\alpha_1,\dots,\alpha_q$
to an element~$\gamma\in \Gamma$
such that $\gamma^{-1}\in R_{\mathfrak F}$ and
$H(\gamma)\ll c^{\ell_{\Gamma_1}(\gamma_1)}$, for some real number~$c$.

Let us now prove the asserted claim.
For any positive word~$\gamma_1$ in~$\alpha_1,\dots,\alpha_q$,
one has $\gamma_1\cdot a_1\in U_1$;
in particular, we can consider
the point $a(\gamma_1)=(\gamma_1\cdot a_1,a_2,\dots,a_m)$
of $ U_1\times\dots\times U_m$
and its image $\phi(a(\gamma_1))$ under the section~$\phi$.

By \S\ref{ss.lipschitz},
there exists a real number~$c_1\geq 1$ such that 
$\delta(\alpha_j\cdot a_1;\mathscr L_{\Gamma_1}) \geq c_1^{-1} 
\delta(a_1;\mathscr L_{\Gamma_1})$,
uniformly in~$a_1$. By induction on the length~$\ell_{\Gamma_1}(\gamma_1)$
of the positive
word~$\gamma_1$, this implies the inequality
\begin{equation}\label{eq.lipschitz-length}
 \delta(\gamma_1 \cdot a_1;\mathscr L_{\Gamma_1}) \geq c_1^{-\ell_{\Gamma_1}(\gamma_1)}.
\end{equation}

We first set $\gamma_2=\dots=\gamma_m=1$

Let then $j>m$.
Let $\psi_j\colon U_1\to U_j$ be the analytic map
defined by $\psi_j(x)=\phi_j(x,a_2,\dots,a_m)$.
By construction (lemma~\ref{lemm.projection}), 
if $\psi_j(x)=\phi_j(x,a_2,\dots,a_m) \in\mathscr L_{\Gamma_j}$, 
one has $x=q_1(x,a_2,\dots,a_m)\in\mathscr L_{\Gamma_1}$.
In other words, one has $\psi_j^{-1}(\mathscr L_{\Gamma_j})\subset\mathscr L_{\Gamma_1}$.
Applying lemma~\ref{lemm.distance} to~$\psi_j$,
we obtain an inequality of the form
\[ \delta(\phi_j(x,a_2,\dots,a_m);\mathscr L_{\Gamma_j})\gg
   \delta(x;\mathscr L_{\Gamma_1})^k,\]
for some integer~$k\geq0$,
for all $x\in U_1$.
In particular,
\begin{equation}\label{eq.distance}
\delta(\phi_j(a(\gamma_1));\mathscr L_{\Gamma_j})\gg
   \delta(\gamma_1\cdot a_1;\mathscr L_{\Gamma_1})^k.
\end{equation}

By proposition~\ref{prop.fundamental}, 
there exists $\gamma_j\in\Gamma_j$ such that
$\phi_j(a(\gamma_1))\in\gamma_j\cdot \mathfrak F_j$.
By proposition~\ref{prop.proper} and lemma~\ref{lemm.heights-length},
one has 
\begin{equation}\label{eq.height}
 H(\gamma_j) \ll \delta(\phi_j(a(\gamma_1) );\mathscr L_{\Gamma_j})^{-\kappa}, \end{equation}
where $\kappa$ is a positive real number, independent of~$\gamma_1$.
By equations~\eqref{eq.lipschitz-length}, \eqref{eq.distance}
and~\eqref{eq.height}, we thus have
\begin{equation}
 H(\gamma_j)\ll \delta(\gamma_1\cdot a_1;\mathscr L_{\Gamma_j})^{-k\kappa}
 \ll c_1^{\ell_{\Gamma_1}(\gamma_1) k \kappa}.
\end{equation}
Let $c=c_1^{k\kappa}$.

Let $\gamma=(\gamma_1,\dots,\gamma_n)\in\Gamma$.
By what precedes, $H(\gamma)\ll c^{\ell_{\Gamma_1}(\gamma_1)}$.
Moreover, $\phi_j(a(\gamma_1))\in\gamma_j\cdot\mathfrak F_j$
for every~$j$: this follows from the fact that $a_j\in\mathfrak F_j$
if $j\leq m$, and from the construction of~$\gamma_j$ if~$j>m$.

Let us prove $\gamma^{-1} \in R_{\mathfrak F}$.
One has $W\subset p^{-1}(V^\an)$ by assumption; 
since $\gamma\in\Gamma$,
this implies $\gamma^{-1}\cdot W\subset p^{-1}(V^\an)$.
Consequently,
\[ \gamma^{-1}\cdot  Y^\an \cap\mathfrak F\cap p^{-1}(V^\an) 
\supset \gamma^{-1}\cdot  W \cap\mathfrak F\cap p^{-1}(V^\an) 
= \gamma^{-1} \cdot W\cap\mathfrak F. \]
The analytic morphism 
\[ U_1\times\dots U_m\to W , \qquad
  (x_1,\dots,x_m)\mapsto  \phi (\gamma_1\cdot x_1,x_2,\dots,x_m) \]
is an immersion and
maps the point~$a=(a_1,\dots,a_m)$ to the point $\phi(a(\gamma_1))\in\gamma\cdot\mathfrak F$. Since $a$ is  a rigid point,
this morphism maps a neighborhood of~$a$ into~$\gamma\cdot\mathfrak F$,
so that $\dim(W\cap \gamma\cdot\mathfrak F)\geq m$.
This proves $\gamma^{-1}\in R_{\mathfrak F}$.

Applying lemma~\ref{lemm.heights-length}  to estimate $H(\gamma_1)$,
we thus have shown the existence of a positive real number~$c$
such that for every positive word~$\gamma_1$ in $\alpha_1,\dots,\alpha_q$,
there exists an element $\gamma=(\gamma_1,\dots,\gamma_n)$
completing~$\gamma_1$
such that $H(\gamma)\ll c^{\ell_{\Gamma_1}(\gamma_1)}$
and $\gamma ^{-1}\in R_{\mathfrak F} \cap \Gamma$.

Let $\Upsilon'$ be the set of all such elements~$\gamma^{-1}$,
where $\gamma_1$ ranges over positive words in $\alpha_1,\dots,\alpha_q$.
It is a subset of $R_{\mathfrak F}\cap \Gamma$.
By construction, the projection~$q_1$ is injective on~$\Upsilon'$.
Moreover, since the number of positive words 
of length~$\ell$  in~$\alpha_1,\dots,\alpha_q$
is $q^\ell$, 
the cardinality of $\Upsilon'_T$ is bounded from below by
$q^{\log(T)/\log(c)}=T^{\log(q)/\log(c)}$,
and the exponent of~$T$ is strictly positive, since $q\geq 2$. 
Finally, let~$j$ be such that
$q_j(\xi)\not\in\mathscr L_{\Gamma_j}$.
By construction, $\phi_j(a(\gamma_1))\in\gamma_j\mathfrak F_j$,
hence $\gamma_j\mathfrak F_j$ meets~$U'_j$.
By corollary~\ref{coro.proper}, the set~$S_j$
of such  elements~$\gamma_j$ in~$\Gamma_j$ is finite.
It follows that there is a subset~$\Upsilon$ of~$\Upsilon'$
that satisfies the conclusion of the proposition.
\end{proof}

\begin{prop}\label{prop.stabilizer-infinite}
Let $G'_0$ be the subgroup of~$G_0$ consisting of elements
$(g_j)$ such that $g_j=\id$ if $q_j(\xi)\not\in\mathscr L_{\Gamma_j}$.
Both the stabilizer of~$W$ inside~$G'_0\cap \Gamma$
and its image in~$\Gamma_1$ under the first projection
have many rational points.
\end{prop}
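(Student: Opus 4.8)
The plan is to combine the quantitative input of Lemma~\ref{lemm.R-many} with the Pila--Wilkie theorem (theorem~\ref{theo.pw}), following the Pila--Zannier strategy, and then to exploit the rigidity property of $R_{\mathfrak F}$ recorded in Lemma~\ref{lemm.R}~(3) to upgrade many rational points on $R_{\mathfrak F}$ into many elements of the stabilizer of~$W$.

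First I would fix the affinoid domain $\mathfrak F=\prod \mathfrak F_i$ and the subset $\Upsilon\subset R_{\mathfrak F}\cap\Gamma$ furnished by Lemma~\ref{lemm.R-many}; by Lemma~\ref{lemm.heights-length} the heights of the elements of $\Upsilon$ are controlled by their word-lengths, so $\Upsilon$ has $\Card(\Upsilon_T)\geq T^c$ for some $c>0$, i.e.\ $R_{\mathfrak F}$ has many $K$-rational points in the sense of~\S\ref{subsec.many} (note $\Gamma\subset\PGL(2,K)^n$ for a suitable number field $K\subset F$). After identifying $G_0(F)$ with an $F$-subanalytic subset of some affine space via Weil restriction, apply theorem~\ref{theo.pw} with, say, $\eps=c/2$: there is a family of blocks $\mathcal W\subset R_{\mathfrak F}\times\Q_p^s$ such that for every $T>1$ the points of $\Upsilon_T$ are covered by fewer than $c'T^{c/2}$ fibres $\mathcal W_\sigma$. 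A counting argument (pigeonhole) then shows that for $T$ large there is a single $\sigma=\sigma(T)$ with $\Card(\mathcal W_\sigma\cap\Upsilon_T)\geq (T^c)/(c'T^{c/2})\to\infty$. Thus one obtains an infinite set of elements of $\Upsilon$ lying in one block $B:=\mathcal W_\sigma$, which is a connected smooth $F$-subanalytic set of positive dimension contained in $R_{\mathfrak F}$ (after discarding the degenerate cases where the block is a point — but a point cannot contain infinitely many elements of $\Upsilon$).

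Next I would apply Lemma~\ref{lemm.R}~(3): since $B$ has irreducible Zariski closure $M$ contained in $R_{\mathfrak F}$, for all $g,h\in B$ one has $g\cdot Y=h\cdot Y$, i.e.\ $g^{-1}h$ stabilises $Y$ (hence, intersecting with $\Omega$, it stabilises $W$). Fixing one base-point $g_0\in B\cap\Upsilon$ and letting $g$ range over the infinitely many elements of $B\cap\Upsilon$, the elements $g_0 g^{-1}$ all stabilise $W$; because $q_1$ is injective on $\Upsilon$ (Lemma~\ref{lemm.R-many}~(2)) these stabiliser elements are pairwise distinct, and their heights are bounded polynomially in $T$ (using $H(g^{-1})\ll H(g)^c$ and $H(g_0 g^{-1})\ll H(g_0)H(g^{-1})$, from \S\ref{ss.heights}). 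Keeping track of the height bounds in the pigeonhole step — one takes $T$ ranging over a sequence, and in each range gets $\geq \tfrac12 T^{c/2}$ stabiliser elements of height $\ll T^{O(1)}$ — yields that the stabiliser of $W$ inside $\Gamma$ has many $K$-rational points. Moreover, by construction every $\gamma\in\Upsilon$ has $\gamma_j=\id$ whenever $q_j(\xi)\notin\mathscr L_{\Gamma_j}$ (Lemma~\ref{lemm.R-many}~(3), together with $\Card(q_j(\Upsilon))=1$ after translating so that this common value is $\id$), so the stabiliser elements produced lie in $G'_0\cap\Gamma$; this gives the statement for $G'_0\cap\Gamma$. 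Finally, since $q_1$ is injective on $\Upsilon$, the first-projection map is injective on the produced set of stabiliser elements and is height-non-increasing up to a bounded power (again \S\ref{ss.heights}), so the image in $\Gamma_1$ also has many rational points.

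The main obstacle will be the bookkeeping in the pigeonhole/counting step: theorem~\ref{theo.pw} only gives, for each individual $T$, a covering by few blocks drawn from a \emph{single} family $\mathcal W$, so one must argue that as $T$ grows one can choose the \emph{same} block infinitely often (or at least a block whose Zariski closure is independent of $T$) in order to apply the rigidity Lemma~\ref{lemm.R}~(3) — this is handled by noting that $\mathcal W$ has only finitely many "types" of fibres up to Zariski closure, so some fixed irreducible component of some fixed $\overline{\mathcal W_\sigma}$ captures a positive proportion of $\Upsilon_T$ for arbitrarily large $T$; a secondary subtlety is ensuring the height bounds survive this extraction, which is why one works with the explicit sequence of scales $T$ rather than a single one. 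Everything else is a routine combination of the already-established Lemmas \ref{lemm.R}, \ref{lemm.R-many} and the height estimates of \S\ref{ss.heights}.
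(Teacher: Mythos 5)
Your proposal follows the paper's own proof essentially step for step: lemma~\ref{lemm.R-many} supplies many points of $R_{\mathfrak F}\cap\Gamma$, theorem~\ref{theo.pw} plus pigeonhole concentrates $\geq T^{c-\eps}/dr$ of them on the trace~$M$ of a single irreducible component of the Zariski closure of one block (the uniform bound~$r$ on the number of such components coming from the semialgebraic sets $Z_\tau$ in the definition of a family of blocks), and lemma~\ref{lemm.R}~(3) converts differences of these into stabilizer elements of~$W$ lying in $G'_0\cap\Gamma$, with heights $\ll T^2$ and with $q_1$ injective on them. Two small points: the stabilizing element is $g_0^{-1}g$ (or $g^{-1}g_0$), not $g_0g^{-1}$, which stabilizes $g\cdot Y$ rather than $Y$; and your worry about choosing the \emph{same} block for all~$T$ is unnecessary, since ``many rational points'' is verified scale by scale, so a different block (and component) may be used for each~$T$.
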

\begin{proof}
Let $c,\Upsilon,\mathfrak F_i,\mathfrak F=\prod\mathfrak F_i,R=R_{\mathfrak F}$ be as given by lemma~\ref{lemm.R-many}; let $T_0>1$
be such that $\Card(\Upsilon_T)\geq T^c$ for $T\geq T_0$.

Let $K$ be a number field contained in~$F$ such that all groups~$\Gamma_j$
are contained in~$\PGL(2,K)$;
the points of~$R\cap \Gamma$ are $K$-rational points.
Recall that  for every real number~$T$, 
we denote by $R(K;T)$ the set of $K$-rational points of~$R$
of height~$\leq T$. One has $\Upsilon_T=\Upsilon\cap R(K;T)$.

Since $R$ is $F$-subanalytic (lemma~\ref{lemm.R}),
it is also $\Q_p$-subanalytic and
we may apply the $p$-adic Pila-Wilkie theorem
of~\cite{cluckers-comte-loeser2015}, as stated in theorem~\ref{theo.pw}.
Let thus $s\in\N$, $d\in\R$, $\eps>0$,
 and $B\subset  R\times\Q_p^s$ be a family of blocks
such that for every $T>1$, there exists a subset $\Sigma_T\in\Q_p^s$
of cardinality $<d T^\eps$ such that 
$R(K;T)\subset\bigcup_{\sigma\in \Sigma_T}B_\sigma$.
Let also $t\in\N$ and $Z\subset G_0(F)\times \Q_p^t$ be a semi-algebraic
subset such that for every $\sigma\in\Q_p^s$, there exists $\tau\in\Q_p^t$
such that $B_\sigma\subset Z_\tau$ and $\dim(B_\sigma)=\dim(Z_\tau)$.
Let finally~$r$ be an upper bound 
for the number of irreducible components 
of the Zariski closure of the sets~$Z_\tau$, for $\tau\in\Q_p^t$.

Let $T>T_0$.
Since $\Upsilon_T\subset R(K;T)$,
by the pigeonhole principle, there exists $\sigma\in\Sigma_T$
such that 
\[ \Card(\Upsilon_T \cap B_\sigma) \geq
    \frac{\Card(\Upsilon_T)}{\Card(\Sigma_T)} \geq \dfrac1d T^{c-\eps}. \]
Moreover, the Zariski closure of~$B_\sigma$ in $\PGL(2)_F^n$ has at most 
$r$~irreducible components. Consequently, we may choose
such an irreducible component~$\overline M$
whose trace~$M$ on~$B_\sigma$  satisfies
\[ \Card ( \Upsilon_T\cap M) \geq \dfrac1{dr} T^{c-\eps}. \]
(Observe that $\overline M$ is  indeed the Zariski closure of~$M$.)

Let $g\in \Upsilon_T\cap M$. 
Since the Zariski closure of $M$
is irreducible
and $M\subset R_{\mathfrak F}$,
it follows from lemma~\ref{lemm.R}
that the stabilizer of~$W$ inside~$G_0\cap \Gamma$
contains $g^{-1} M$, hence $g^{-1}(\Upsilon_T\cap M)$.
By construction, the image of $g^{-1}(\Upsilon_T\cap M)$
under the projection of index~$j$ is $\{\id\}$ 
if $q_j(\xi)\not\in\mathscr L_{\Gamma_j}$.
This shows in particular that the stabilizer of~$W$ 
inside~$G'_0\cap \Gamma$ contains $g^{-1}(\Upsilon_T\cap M)$.
This set contains $\geq T^{c-\eps}/dr$ points, and their heights
are $\ll T^2$; the same holds for its image by the first projection,
since this projection is injective 
on~$g^{-1}(\Upsilon\cap M)$.

We thus have shown that the stabilizer of~$W$ 
inside~$G'_0\cap \Gamma$ has many rational points,
as well as its image under the first projection.
This concludes the proof of the proposition.
\end{proof}

\begin{prop}\label{prop.W-flat}
The subvariety~$W$ is flat.
\end{prop}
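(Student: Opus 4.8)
The plan is to carry out the Pila--Zannier endgame: feed the large stabilizer produced by proposition~\ref{prop.stabilizer-infinite} into the curve-automorphism analysis of proposition~\ref{prop.aut-curves}, after cutting $Y$ down to a curve, and conclude with the inductive hypothesis on the number of factors. By proposition~\ref{prop.stabilizer-infinite}, the stabilizer $\Theta$ of $W$ inside $G'_0\cap\Gamma$ and its image $\Theta_1=q_1(\Theta)$ in $\Gamma_1$ both have many $K$-rational points; since $\Gamma_1$ is a Schottky group of rank $\geq2$ it is free and torsion-free with every nontrivial element hyperbolic, so $\Theta_1$ is infinite and contains an element of infinite order, and both facts are inherited by finite-index subgroups. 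As $W$ is Zariski-dense in $Y$, $\Theta$ stabilizes $Y$, and every $\theta\in\Theta$ has trivial $2,\dots,m$-th components (it lies in $G_0$).

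Next I would reduce to a curve. Write $m=\dim Y$ and pick a rigid point $a'$ of $W$ lying in the image of the local section $\phi=(\phi_1,\dots,\phi_n)$ of $q_J|_Y$ near $\xi$ (with $J=\{1,\dots,m\}$), choosing $a=q_J(a')=(a_1,\dots,a_m)$ generic so that $Y\cap\{z_2=a_2,\dots,z_m=a_m\}$ is purely $1$-dimensional. Let $\bar C$ be the Zariski closure in $\P_1^n$ of the analytic curve $x_1\mapsto\phi(x_1,a_2,\dots,a_m)$: it is an irreducible component of the slice above, hence an integral curve contained in $Y$, with $q_1(\bar C)=\P_1$ (its image already contains the open set $U_1$) and with $a'$ a smooth $F$-rational point (near $a'$, $\bar C$ is the image of the immersion $x_1\mapsto\phi(x_1,a_2,\dots,a_m)$). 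Every $\theta\in\Theta$ sends $\bar C$ onto an irreducible component of $Y\cap\{z_2=a_2,\dots,z_m=a_m\}$, of which there are finitely many, so a finite-index subgroup $\Theta'$ of $\Theta$ stabilizes $\bar C$, and $\Theta'$ and $\Theta'_1$ retain many $K$-rational points.

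Now I would apply proposition~\ref{prop.aut-curves} to $\bar C$: its stabilizer in $(\Aut\P_1)^n$ contains $\Theta'$, hence the image of that stabilizer under the first projection contains an element of infinite order. Case~(1) is impossible since $q_1(\bar C)=\P_1$; case~(3) would give a two-point subset $\Delta\subset\P_1(\bar F)$ stable under $\Theta'_1$, so that the stabilizer of $\Delta$ in $\Gamma_1$ contains $\Theta'_1$ — contradicting lemma~\ref{lemm.few-rational}, since $\Theta'_1\subset\PGL(2,K)$ has many $K$-rational points. Hence case~(2) holds: $q_1|_{\bar C}$ is an isomorphism $\bar C\to\P_1$ whose inverse has components $f_1=\id,f_2,\dots,f_n$ each constant or a homography over $F$. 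Therefore $\bar C$ is defined by equations of the form $z_i=g_i\cdot z_1$ (with $g_i\in\PGL(2,F)$) and $z_i=c_i$, the constant indices including $\{2,\dots,m\}$ and each such $c_i$ lying in $\Omega_{\Gamma_i}$ because $a'\in\Omega$; thus $\bar C$ is flat. In particular, when $m=1$ we get $\bar C=Y$ (same dimension, $\bar C\subseteq Y$ irreducible), so $W=Y^\an\cap\Omega$ is flat and we are done.

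For $m\geq2$ I would invoke the inductive hypothesis that proposition~\ref{mt-bis} holds for fewer than $n$ factors. Rerunning the construction over a dense family of slices of $Y$ shows that a generic slice is flat of a fixed combinatorial type, and the irreducibility of $Y$ then yields a flat relation $z_i=g\cdot z_1$ (for some $i$ and some $g\in\PGL(2,F)$) that holds identically on $Y$; it is exactly here that the maximality of $W$ must be re-invoked, to exclude the configurations in which $Y$ is flat on each slice but not flat. Such a relation exhibits $W$, up to a coordinate projection, as an irreducible component of the preimage of a maximal subvariety in a product of strictly fewer factors, and the inductive hypothesis finishes the proof. The main obstacle is precisely this last step — extracting a genuine flat relation on all of $Y$ from the flatness of its slices and organizing the descent — which is the point where maximality of $W$ is essential and where the gap between ``flat'' and ``geodesic'' is invisible.
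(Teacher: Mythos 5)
Your proposal follows the paper's strategy faithfully up to and including the application of proposition~\ref{prop.aut-curves}: restrict to a one-dimensional slice of $Y$ obtained by freezing the coordinates $2,\dots,m$, use proposition~\ref{prop.stabilizer-infinite} to get an element of infinite order stabilizing (a component of) the slice, rule out case~(3) via lemma~\ref{lemm.few-rational}, and conclude that each slice is flat. For $m=1$ this does finish the proof. But the step you yourself flag as the ``main obstacle'' --- passing from flatness of every slice to flatness of $Y$ --- is a genuine gap, and it is exactly the part of the argument the paper has to work for; ``rerunning the construction over a dense family of slices'' does not by itself produce a single flat relation valid on all of $Y$, since a priori the homography $\tau_{j,a}$ defining the slice through $a=(a_2,\dots,a_m)$ could vary with $a$.

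The paper closes this gap in two moves, neither of which appears in your sketch. First, a rigidity argument pins down the slice homographies: for $j$ with $q_j(\xi)\in\mathscr L_{\Gamma_j}$, the homography $\tau_{j,a}$ maps an infinite subset of $\P_1(F)$ (coming from $\mathscr L_{\Gamma_1}$, which is perfect) into $\P_1(F)$, hence lies in $\PGL(2,F)$ and not merely in $\PGL(2,\mathscr H(a))$; the assignment $a\mapsto\tau_{j,a}$ is then an analytic map with values in $\PGL(2,F)$, hence is \emph{constant}. This is what upgrades slice-wise flatness to an identity $z_j=\tau_j\cdot z_1$ on all of $Y$. Second, for $j$ with $q_j(\xi)\in\Omega_{\Gamma_j}$ one uses that the stabilizer element $g$ supplied by proposition~\ref{prop.stabilizer-infinite} satisfies $g_j=\id$ while $g_1$ has infinite order, so $\psi_{a,j}=\psi_{a,j}\circ g_1$ forces $\phi_j$ to be independent of $x_1$. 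Together these yield a product decomposition $Y=Y'\times Y''$ and $W=W'\times W''$, where $W'$ (the graph of the constant homographies $\tau_j$) is flat and $W''$ lives in a product of strictly fewer factors; maximality of $W$ passes to maximality of $W''$ in $(p'')^{-1}((V'')^\an)$ precisely because of this product structure, and the induction on $n$ applies to $W''$, not to a coordinate projection of $W$ as you suggest. So the overall architecture you propose is right, but the descent mechanism --- constancy of $a\mapsto\tau_{j,a}$ and the splitting $W=W'\times W''$ --- is missing and is not a routine verification.
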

\begin{proof}
We have constructed in~\S\ref{ss.section2} an analytic map
$\phi\colon U_1\times\dots\times U_m \to Y$, 
which is a local section of the
projection to the $m$~first coordinates.

Let $a\in \prod_{i=2}^m \left(\Omega_{\Gamma_i}\cap U_i\right)$;
let us denote by $W_a$ the fiber of~$W$ over~$a$
under the projection to $\prod_{i=2}^m \P_1^\an$;
let us define~$Y_a$ similarly. When $a$ varies,
the number of irreducible components of~$Y_a$
is uniformly bounded.

Let $\psi_a\colon (U_1)_{\mathscr H(a)} \to Y_a^\an$ be the analytic
morphism deduced from~$\phi$. 
We claim that the components of~$\psi_a$  are
either constant or homographies.

Let $g\in G_0\cap\Gamma$ be an element such that
$g\cdot W=W$, $g_1\neq \id$,
and $g_j=\id$ if $q_j(\xi)\not\in\mathscr L_{\Gamma_j}$
(proposition~\ref{prop.stabilizer-infinite}).
Since $g\cdot W=W$, one has $g\cdot Y=Y$, 
hence $g\cdot W_a=W_a$ and $g\cdot Y_a=Y_a$.
The element~$g$ induces a commutative diagram
\[ \begin{tikzcd} 
   Y_a \ar{r}{g} \ar{d}{} & Y_a \ar{d}{}  \\
    (\P_1)_{\mathscr H(a)} \ar{r}{g_1}
       \ar[dashrightarrow,bend left=40]{u}{\psi_a} 
    & (\P_1)_{\mathscr H(a)}, \ar[bend right=40,dashrightarrow]{u}[swap]{\psi_a}
  \end{tikzcd}
\]
where the section~$\psi_a$ is analytic  and defined over
the open subset $(U_1)_{\mathscr H(a)}$ of $(\P_1)^\an_{\mathscr H(a)}$.
Let $Y'_a$ be the irreducible component of~$Y_a$ that contains~$\psi_a(\xi_1)$;
it is geometrically irreducible.
Recall that $g_1$ has infinite order;
replacing~$g_1$ and~$g$ by some fixed power,
we may thus assume that $g\cdot Y'_a = Y'_a$. 

By proposition~\ref{prop.aut-curves}, either
$Y'_a\to(\P_1)_{\mathscr H(a)}$ is an isomorphism
and the components of its inverse are constant or homographies,
or 
there exists a subset~$\Delta$ of~$\mathbf P_1({\overline{\mathscr H(a)}})$
such that $\Card(\Delta)=2$ and $g_1(\Delta)=\Delta$
for every element $g=(g_1,\dots,g_n)\in G'_0\cap\Gamma$ such that $g\cdot W=W$
and $g\cdot Y'_a=Y'_a$.
Let us assume that we are in the latter case.
Using that $\Gamma_1\subset\PGL(2,F)$, we see
that $\Delta\subset\mathbf P_1(\overline F)$.
By lemma~\ref{lemm.few-rational}, 
the projection to~$\Gamma_1$ of the stabilizer of~$W$ inside~$G'_0\cap\Gamma$
has few rational points, contradicting
proposition~\ref{prop.stabilizer-infinite}.

We thus have shown that the components of the analytic map~$\psi_a$ 
are either constant or given by homographies.

Let $j\in\{m+1,\dots,n\}$.

Let us first assume that $q_j(\xi)\in\Omega_{\Gamma_j}$.
Then $g_j=\id$, whence the relation $\psi_{a,j}=\psi_{a,j} \circ g_1$.
Since $g_1\neq\id$, this implies that~$\psi_{a,j}$ is constant,
i.e., $\phi_j$ does not depend on the coordinate~$x_1$.
Since $U$ is reduced, the morphism~$\phi_j$ is deduced by pull-back of an 
analytic map $\theta_j\colon \prod_{i=2}^m U_i \to \P_1^\an$.

Let us then assume that $q_j(\xi)\in\mathscr L_{\Gamma_j}$.
Since the $j$th component of~$\phi$ takes the value~$q_j(\xi)$,
the section~$\psi_{a,j}$ cannot be constant. 
It is thus a homography~$\tau_{j,a}$.

A priori, one has  $\tau_{j,a}\in \PGL(2,\mathscr H(a))$ for every~$a$.
However, 
by condition~(3) of lemma~\ref{lemm.projection},
one has $\phi_j^{-1}(\mathscr L_{\Gamma_j})\subset\mathscr L_{\Gamma_1}$. 
The limit sets $\mathscr L_{\Gamma_1}$
and $\mathscr L_{\Gamma_j}$ are contained in $\P_1(F)$ and have
no isolated points, so that $\tau_{j,a}^{-1}$ maps an infinite subset
of $\P_1(F)$ into $\P_1(F)$; this implies that 
$\tau_{j,a}\in\PGL(2,F)$.

Observe that for~$x\in U_1\cap\P_1(F)$, one has
$\tau_{j,a}\cdot x=\psi_{a,j}(x)=\phi(x,a)$.
In particular, the assignment $a\mapsto\tau_{j,a}$  
is induced by an analytic morphism. Since it takes its values
in~$\PGL(2,F)$, it is constant.

Let $J'$ and $J''$ be the set of all $j\in\{m+1,\dots,n\}$ 
such that $q_j(\xi)$ belongs to $\mathscr L_{\Gamma_j}$
and to $\Omega_{\Gamma_j}$ respectively.
Let $\Omega'=\Omega_{\Gamma_1}\times\prod_{j\in J'}\Omega_{\Gamma_j}$
and $\Omega''=\prod_{i=2}^m\Omega_{\Gamma_i}\times\prod_{j\in J''}\Omega_{\Gamma_j}$; similarly, write
$X'=X_1\times\prod_{j\in J'}X_j$ 
and $X''=\prod_{i=2}^m X_i\times\prod_{j\in J''}X_j$,
and decompose the projection $p\colon\Omega\to X$
as $(p',p'')$, where $p'\colon\Omega'\to X'$ and $p''\colon\Omega''\to X''$
are the natural projections.

Let $Z'$ be the graph in $(\P_1\times \prod_{j\in J'}\P_1)^\an$
of $(\tau_j)_{j\in J'}$  
and $Z''\subset (\prod_{i=2}^m \P_1\times\prod_{j\in J''}\P_1)^\an$ 
be the graph of~$(\theta_j)_{j\in J''}$.
Let $Y'$ and $Y''$ be the Zariski closure of $Z'$ and $Z''$,
let $W'$ and $W''$ be their traces
in $\Omega'$ and $\Omega''$, 
and let $V'$ and $V''$ be the Zariski closures of $p'(Z')$
and $p''(Z'')$.
It is clear that $Y'=Z'$ is the curve  in $\P_1\times\prod_{j\in J'}\P_1$
(with coordinates $x_1$ and~$x_j$ for $j\in J'$)
given by the equations $x_j=\tau_j(x_1)$, and $W'$ is its trace on~$\Omega'$.
In particular, $W'$ is flat.

By construction, $Z'\times Z''$ is a subspace of~$Y^\an$
which meets $W$ in a Zariski dense subset of itself;
hence $Y=Y'\times Y''$ and $W=\Omega\cap Y^\an=W'\times W''$.
Moreover, $p(W)=p'(W')\times p''(W'')\subset V$, hence $V'\times V''\subset V$.
Consequently, $W''$
is a maximal algebraic irreducible subset of $(p'')^{-1}((V'')^\an)$.
By induction, $W''$ is flat.

Consequently, $W=W'\times W''$ is flat, as was to be shown.
\end{proof}

\section{A characterization of geodesic subvarieties}


\subsection{}
Let $F$ be a finite extension of $\Q_p$
and let $(\Gamma_i)_{1\leq i\leq n}$ be a finite family
of arithmetic Schottky subgroups of ranks $\geq 2$ in $\PGL(2,F)$
Let us set $\Omega=\prod_{i=1}^n\Omega_{\Gamma_i}$,
$X=\prod_{i=1}^n X_{\Gamma_i}$, and let $p\colon \Omega\to X^\an$
be the morphism deduced from the morphisms $p_{\Gamma_i}\colon\Omega_{\Gamma_i}\to X_{\Gamma_i}^\an$.

\begin{theo}\label{theo.geodesic}
Let $W$ be a Zariski closed subvariety of~$\Omega$, geometrically
irreducible.
Then the following properties are equivalent:
\begin{enumerate}\def\theenumi{\roman{enumi}}
\item The variety~$W$ is geodesic ;
\item Its projection $p(W)$ is algebraic;
\item The dimension of the Zariski closure of~$p(W)$ in~$X$
is equal to $\dim(W)$.
\end{enumerate}
\end{theo}
%
%
\begin{proof}
Let us assume that $W$ is geodesic and let us show that $p(W)$
is algebraic.

We may assume that no projection $p_{\Gamma_i}$
is constant on~$W$.
Define a relation~$\sim$ on $\{1,\dots,n\}$ given by $i\sim j$ if there
exists $g\in \PGL(2,F)$ (necessarily unique)
such that $g\Gamma_ig^{-1}$ and~$\Gamma_j$
are commensurable and $z_j=g\cdot z_i$  for every $z\in W$.
This is an equivalence relation.
Fix an element~$j$ in each equivalence class; for~$i$ such that
$i\sim j$, we may replace~$\Gamma_i$ by its conjugate $g \Gamma_i g^{-1}$
and assume that $z_j=z_i$ on~$W$.
This shows that $W$ and~$\Omega$ decompose as a product indexed by the set
of equivalence classes of the following   particular situation:
all the subgroups $\Gamma_i$ are commensurable, and
$W$ is the diagonal of~$\Omega$.  
It thus suffices to treat this particular case.

Let $\Gamma_0=\bigcap_i \Gamma_i$
and $X_0$ be the algebraic curve associated with $\Omega_{\Gamma_0}/\Gamma_0$.
Then, for every~$i$, the morphism $f_i\colon W\to X_i^\an$
deduced from~$f=p|_W$
factors as the composition of the 
uniformization $p_0\colon \Omega_{\Gamma_0}\to X_0^\an$
and of a  finite morphism $X_0^\an \to X_i^\an$.
By \textsc{gaga} (\cite{berkovich1990}, corollary 3.5.2;
\cite{poineau2010a}, appendix),
a finite analytic morphism of algebraic curves 
is algebraic;  consequently, there exists a finite morphism
$q_i\colon X_0\to X_i$ such that $f_i=q_i^\an\circ p_0$.
Then $p(W)$  is the image of $X_0$ by the
finite morphism $q=(q_1,\dots,q_n)\colon X_0\to X$, hence is algebraic.
This shows that (i) implies (ii). Since it is clear that (ii)  implies (iii), it remains to prove that (iii) implies (i).
%

Let us assume now that the dimension of the Zariski closure~$V$ of~$p(W)$ 
in~$X$ is equal to the dimension of~$W$.
By construction, $W$ is a maximal irreducible algebraic
subvariety of~$p^{-1}(V^\an)$. By proposition~\ref{mt-bis}, $W$ is flat.
A similar analysis as in the proof of the first implication
shows that there is a partition of the indices $\{1,\dots,n\}$
under which $W$ decomposes as a product of flat curves and points.
Since it suffices to prove that each of these curves is geodesic,
we may assume that $W$ is a flat curve,
of the form
\[ W=\{(z,g_2\cdot z,\dots, g_n\cdot z)\}\cap\Omega, \]
where $g_2,\dots,g_n\in\PGL(2,F)$.

Let us first assume that $n=2$.
Let then $g\in\PGL(2,F)$ be
such that $W=\{(z,g\cdot z)\}\cap\Omega$ and let us prove that
$\Gamma_2$ and $g\Gamma_1g^{-1}$ are commensurable,
a property which is equivalent to the finiteness of both orbit sets
$\Gamma_2\backslash \Gamma_2 g\Gamma_1$ and $\Gamma_1\backslash \Gamma_1 g^{-1}\Gamma_2$.

Let us argue by contradiction and assume that $\Gamma_2\backslash \Gamma_2 g\Gamma_1$ is infinite. (The other finiteness is analogous, or follows by symmetry.)
Fix a rigid point $z\in\Omega_{\Gamma_1}$.
Let $A\subset\Gamma_1$ be a set such that $gA$ is a set of representatives
of $\Gamma_2\backslash \Gamma_2 g \Gamma_1$; by assumption, $A$ is infinite.
Since $\Gamma\backslash W\subset V^\an$, the algebraic variety~$V$ contains
the infinite set of points
$ p(a\cdot z,g\cdot az) = (p_1(z),p_2(ga\cdot z))$,
for $a\in A$, hence it contains its Zariski closure $\{p_1(z)\}\times X_2$.
Since this holds for every $z\in W$, we deduce that $V$ contains $X_1\times X_2$,
contradicting the assumption that $\dim(W)=1$.

Let us now return to the general case.
To prove that $W$ is geodesic, it suffices to establish that
the subgroups $\Gamma_j$ and $g_j\Gamma_1 g_j^{-1}$ are commensurable,
for every $j\in\{2,\dots,n\}$.
Up to renumbering the indices, it suffices to treat the case $j=2$.
Let $\Omega'=\Omega_{\Gamma_1}\times\Omega_{\Gamma_2}$,
let $p'\colon \Omega'\to X'=X_1\times X_2$ be the uniformization
map,  and denote by~$\pi$ the projections
from~$\Omega$ to~$\Omega'$ and from $X$ to~$X'$.
Let $W'=\pi(W)$ and $V'=\pi(V)$. 
By Chevalley's theorem, $V'$ is an algebraic curve in~$X'$.
Obviously, $W'$ is a flat curve contained in $(p')^{-1}((V')^\an)$,
hence is an maximal irreducible  algebraic subset
of $(p')^{-1}((V')^\an)\cap\Omega'$. 
By the case $n=2$, the Schottky groups $\Gamma_2$ and $g_2 \Gamma_1 g_2^{-1}$ 
are commensurable, as was to be shown.
This concludes the proof of theorem~\ref{theo.geodesic}.
\end{proof}

\begin{coro}
Let $V$ be an irreducible \emph{curve} in $X$.
Then every  irreducible algebraic subvariety of~$\Omega_{\C_p}$
which is maximal among those contained in~$p^{-1}(V^\an_{\C_p})$ is geodesic.
\end{coro}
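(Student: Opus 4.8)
The plan is to read the corollary off theorem~\ref{mt} and theorem~\ref{theo.geodesic}, the only genuinely new ingredient being a dimension count forced by the hypothesis that $V$ is a curve. Let $W$ be an irreducible algebraic subvariety of $\Omega_{\C_p}$, maximal among those contained in $p^{-1}(V^\an_{\C_p})$. Since $p$ is a local isomorphism and $\dim V=1$, the analytic space $p^{-1}(V^\an_{\C_p})$ is everywhere of dimension~$1$, so $\dim W\le 1$. If $\dim W=0$, then $W$ is a single $\C_p$-point of $\Omega_{\C_p}$, defined by equations $z_i=c_i$ with $c_i\in\Omega_{\Gamma_i}(\C_p)$; it is flat and, involving no equation of type~(2), vacuously geodesic. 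So I would concentrate on the case $\dim W=1$.

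Over the algebraically closed field $\C_p$, $W$ is geometrically irreducible; by corollary~\ref{coro.irr-comp-2}, $W=Y^\an\cap\Omega_{\C_p}$, where $Y$ is its Zariski closure in $(\P_1^n)_{\C_p}$, so $W$ is Zariski closed in $\Omega_{\C_p}$. The fibres of $p$ being discrete, $p|_W$ is non-constant, hence $p(W)$ is an infinite subset of the curve $V^\an_{\C_p}$ and its Zariski closure is a one-dimensional irreducible component $V_0$ of $V_{\C_p}$; in particular the dimension of the Zariski closure of $p(W)$ equals $\dim W$, which is condition~(iii) of theorem~\ref{theo.geodesic} for $W$. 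Granting that $W$ is flat, the implication (iii)$\Rightarrow$(i) of that theorem then gives that $W$ is geodesic, so the whole matter reduces to showing that $W$ is flat.

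Flatness of $W$ is where theorem~\ref{mt} --- and behind it the $p$-adic Pila-Wilkie theorem --- comes in, and with it the one point that needs care: theorems~\ref{mt} and~\ref{theo.geodesic} are stated over a finite extension of $\Q_p$, whereas $W$ is defined over $\C_p$. I would deal with this by descending. By \cite[théorème~7.16, (v)]{ducros2009}, there are a finite extension $F'/F$ inside $\C_p$ and an irreducible component $V'$ of $V_{F'}$ with $(V')_{\C_p}=V_0$; arguing as in the descent step used to deduce theorem~\ref{mt} from proposition~\ref{mt-bis} in section~\ref{sec.proof}, one realizes $W$ as an irreducible component of the $\C_p$-base change of a subvariety of $\Omega_{F'}$ maximal among those contained in $p^{-1}((V')^\an)$, so that the flatness of $W$ follows from theorem~\ref{mt} over $F'$. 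I expect this base-change bookkeeping to be the main obstacle; once flatness is in hand, the remaining passage from flat to geodesic --- decompose $W$ into a product of flat curves and points, reduce to $n=2$, and show that the homography $g$ with $W=\{(z,g\cdot z)\}\cap\Omega$ must have $g\Gamma_1g^{-1}$ commensurable with $\Gamma_2$, on pain of $\overline{p(W)}$ being all of $X_1\times X_2$ --- is exactly the argument of (iii)$\Rightarrow$(i) in theorem~\ref{theo.geodesic} and involves only the Schottky groups $\Gamma_i\subset\PGL(2,F)$, so it carries over to $\C_p$ verbatim.
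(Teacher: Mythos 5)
Your proposal is correct and follows essentially the same route as the paper: the dimension count forced by $\dim V=1$ shows a positive-dimensional maximal $W$ is an irreducible component of $p^{-1}(V^\an)_{\C_p}$, one descends to a finite extension of $F$ via \cite[théorème~7.16]{ducros2009}, and then theorem~\ref{theo.geodesic} (whose implication (iii)$\Rightarrow$(i) packages exactly the flatness from theorem~\ref{mt}/proposition~\ref{mt-bis} plus the commensurability argument you describe) finishes the proof. The paper simply invokes theorem~\ref{theo.geodesic} directly after the descent rather than unpacking it as you do.
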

\begin{proof}
Let $W_0$ be a irreducible algebraic subvariety of~$\Omega_{\C_p}$,
maximal among those contained in $p^{-1}(V^\an_{\C_p})$;  
let us prove that $W_0$
is geodesic. We may assume that $\dim(W_0)>0$.
Since $p$ is surjective and has discrete fibers, one has $\dim(p^{-1}(V^\an_{\C_p}))=\dim(V^\an_{\C_p})$, hence $\dim(W_0)=1$, so that $W_0$
is an irreducible component of $p^{-1}(V^\an)_{\C_p}$.
By theorem~7.16 of~\cite{ducros2009}, there exists a finite extension~$E$
of~$F$ and an irreducible component~$W$ of~$p^{-1}(V^\an)_E$ such that
$W_0=W_{\C_p}$. 

By theorem~\ref{theo.geodesic}, $W$ is geodesic. Consequently,
$W_0$ is geodesic.
\end{proof}

\begin{rema}
This corollary suggests that the main results of the paper
extend to maximal algebraic irreducible subvarieties of $p^{-1}(V^\an)_{\C_p}$,
without assuming that they are defined over a finite extension of~$F$.
\end{rema}

\bibliographystyle{smfplain}
\bibliography{aclab,acl,drinfeld}
\end{document}